\newtheorem{theorem}{Theorem}[section]
\newtheorem{lemma}[theorem]{Lemma}
\newtheorem{definition}[theorem]{Definition}
\newtheorem{remark}[theorem]{Remark}
\newtheorem{cor}[theorem]{Corollary}
\numberwithin{equation}{section}
\newcommand{\sis}{{\sigma}^2}
\newcommand{\si}{\sigma}
\newcommand{\Si}{\Sigma}
\newcommand{\ff}{{\mathcal {F}}}
\def\gg{{\mathcal {G}}}         
\newcommand{\fg}{{P_{\mathcal{F}}P_{\mathcal{G}}}}
\newcommand{\gf}{{P_{\mathcal{G}}P_{\mathcal{F}}}}
\newcommand{\fgf}{{P_{\mathcal{F}}P_{\mathcal{G}}P_{\mathcal{F}}}}
\newcommand{\gfg}{{P_{\mathcal{G}}P_{\mathcal{F}}P_{\mathcal{G}}}}
\newcommand{\pf}{P_{\mathcal{F}}}
\newcommand{\pg}{P_{\mathcal{G}}}
\newcommand{\fo}{{\mathcal{F}}^ \bot}
\newcommand{\go}{{\mathcal{G}}^ \bot}
\newcommand{\pu}{P_{\mathcal{U}}}
\newcommand{\pv}{P_{\mathcal{V}}}
\newcommand{\hh}{{\mathcal {H}}}
\newcommand{\uu}{{\mathcal {U}}}
\newcommand{\vv}{{\mathcal {V}}}
\newcommand{\bb}{{\mathcal {B}}}
\newcommand{\pgt}{P_{\tilde{\mathcal{G}}}}
\newcommand{\fft}{\tilde{{\mathcal {F}}}}
\newcommand{\ggt}{\tilde{{\mathcal {G}}}}
\newcommand{\pgo}{P_{{\mathcal{G}}^ \bot}}
\newcommand{\pfo}{P_{{\mathcal{F}}^ \bot}}
\newcommand{\moo}{\mathfrak{M}_{00}}
\newcommand{\moi}{\mathfrak{M}_{01}}
\newcommand{\mio}{\mathfrak{M}_{10}}
\newcommand{\mii}{\mathfrak{M}_{11}}
\newcommand{\mf}{\mathfrak{M}_{\ff}}
\newcommand{\mfo}{\mathfrak{M}_{\fo}}
\newcommand{\mg}{\mathfrak{M}_{\gg}}
\newcommand{\mgo}{\mathfrak{M}_{\go}}
\newcommand{\mm}{\mathfrak{M}}
\newcommand{\mum}{{\mu}^2}
\newcommand{\ulo}{u_{\bot}}
\newcommand{\vlo}{v_{\bot}}
\newcommand{\luo}{{\mathcal{U}}_ \bot}
\newcommand{\lvo}{{\mathcal{V}}_ \bot}
\newcommand{\rr}{\mathfrak{R}}
\newcommand{\nn}{\mathfrak{N}}
\newcommand{\dd}{\mathfrak{D}}
\newcommand{\Span}{\rm span}
\newcommand{\gap}{\rm gap}
\newcommand{\dist}{\rm dist}
\journal{Journal of Functional Analysis}
\begin{document}
\begin{frontmatter}
\title
{Angles Between Infinite Dimensional Subspaces
with Applications to the Rayleigh-Ritz 
and Alternating Projectors Methods\tnoteref{label1}}
\tnotetext[label1]{A preliminary version is available at 
\url{http://arxiv.org/abs/0705.1023}}
\author[UCD]{{Andrew Knyazev}
\corref{cor1}\fnref{label2}}
\ead{andrew[dot]knyazev[at]ucdenver[dot]edu}
\ead[url]{http://math.ucdenver.edu/\~{}aknyazev/}
\fntext[label2]{This material is based upon work supported by the NSF DMS award 0612751.}
\cortext[cor1]{Corresponding author. Email: andrew[dot]knyazev[at]ucdenver[dot]edu}
\address[UCD]{Department of Mathematical and Statistical Sciences
University of Colorado Denver,
P.O. Box 173364, Campus Box 170, Denver, CO 80217-3364}
\author[UCD]{Abram Jujunashvili}
\ead{Abram[dot]Jujunashvili[at]na-net[dot]ornl[dot]gov}
\author[UCD]{Merico Argentati}
\ead{Merico[dot]Argentati[at]na-net[dot]ornl[dot]gov}
\ead[url]{http://math.ucdenver.edu/\~{}margenta/}
\begin{abstract}
We define angles from-to and between infinite dimensional subspaces of a Hilbert space, 
inspired by the work of E.~J.~Hannan,~1961/1962 for general
canonical correlations of stochastic processes.
The spectral theory of selfadjoint operators is used to investigate
the properties of the angles, e.g., to establish connections between the angles corresponding to
orthogonal complements. The classical gaps and angles of Dixmier and Friedrichs are characterized 
in terms of the angles. We introduce principal invariant subspaces and prove that they
are connected by an isometry that appears in the polar decomposition of the product of corresponding
orthogonal projectors. Point angles are defined by analogy with the point operator spectrum.
We bound the Hausdorff distance between the sets of 
the squared cosines of the angles corresponding
to the original subspaces and their perturbations.
We show that the squared cosines  of the angles from one subspace to another
can be interpreted as Ritz values in the Rayleigh-Ritz method, where the former subspace serves as a
trial subspace and the orthogonal projector of the latter subspace serves as an operator in the
Rayleigh-Ritz method. The Hausdorff distance between the Ritz values, corresponding
to different trial subspaces, is shown to be bounded by a constant times the gap between the trial subspaces.
We prove a similar eigenvalue perturbation bound that involves the gap squared.
Finally, we consider the classical alternating projectors method
and propose its ultimate acceleration, using the conjugate gradient approach.
The corresponding convergence rate estimate is obtained in terms of the angles.
We illustrate a possible acceleration for the domain decomposition method with a small
overlap for the 1D diffusion equation.\\
\noindent\copyright{2010}{ 
Knyazev, Jujunashvili, and Argentati.
All rights reserved.}
\end{abstract}
\begin{keyword}
Hilbert space  \sep  gap \sep  
canonical correlations  \sep angles  \sep 
isometry  \sep  polar decomposition  \sep 
Rayleigh-Ritz method  \sep  alternating projectors  \sep 
conjugate gradient  \sep  domain decomposition
\end{keyword}
\end{frontmatter}
\newpage
\section{Introduction}
Principal angles, also referred to as canonical angles, or simply as angles, 
between subspaces represent one of the classical mathematical tools
with many applications. The cosines of the angles are related to 
canonical correlations which are widely used in statistics.
Angles between finite dimensional subspaces have become so 
popular that they can be found even in linear algebra textbooks. 

The angles between subspaces $\ff$ and $\gg$ are defined as $q=\min\{\dim\ff,\dim\gg\}$
values on $[0,\pi/2]$ if $q<\infty$. In the case $q=\infty$, where 
both subspaces $\ff$ and $\gg$ are infinite dimensional, traditionally only
single-valued angles are defined, which in the case  $q<\infty$ would correspond to 
the smallest (Dixmier \cite{Dixmier}), smallest non-zero  
(Friedrichs \cite{Friedrichs}), or largest (Krein et al. \cite{kkm48}), angles. 
We define angles from-to
and between (infinite) dimensional subspaces of a Hilbert space
using the spectra of the product of corresponding orthogonal
projectors. The definition is consistent with the finite dimensional case  $q<\infty$
and results in a \emph{set}, possibly infinite, of angles. 

Our definition is inspired by E.J.~Hannan \cite{hannan},
where such an approach to canonical correlations
of stochastic processes is suggested.
Canonical correlations for stochastic processes and functional
data often involve infinite dimensional subspaces.
This paper is intended to revive the interest in angles between
infinite dimensional subspaces.

In functional analysis, the gap and the minimum gap are important concepts used, 
e.g.,\ in operator perturbation theory (\cite{kato}).
The gap between infinite dimensional subspaces
bounds the perturbation of a closed linear operator by measuring
the change in its graph. We show in Theorem \ref{gap_angles} that 
the gap is closely connected to the sine of the largest angle. 

The minimum gap between infinite
dimensional subspaces provides a necessary and sufficient condition
to determine if the sum of two subspaces is closed. The minimum gap is applied, e.g.,\ 
in \cite{cmame07} to prove wellposedness of degenerate saddle point problems. 
The minimum gap is precisely, see Theorem \ref{angles_kato},  
the sine of the angle of Friedrichs, which, in its turn, as  
shown in Theorem \ref{angles_deutsch}, is the infimum of the set of nonzero angles. 
The Dixmier angle is simply the smallest of all angles in our definition. 

We consider a
(real or complex) Hilbert space equipped with an inner product
$(f,g)$  and a vector norm $\|f\|=(f,f)^{1/2}$.
The angle between two unit vectors $f$ and $g$ is
$\theta(f,g)=\arccos|(f,g)|\in[0,\pi/2].$
In \S \ref{angles_properties} of the present paper, we replace
$1D$ subspaces spanned by the vectors $f$ and $g$  with (infinite dimensional) subspaces,
and introduce the concept of principal angles from one subspace to another and
between subspaces using the spectral theory of
selfadjoint operators. We investigate the basic properties of the angles,
which are already known for finite dimensional subspaces, see \cite{ka02},
e.g., we establish connections between the angles corresponding to
subspaces and their orthogonal complements.
We express  classical quantities: the
gap and the minimum gap between subspaces, in terms of the angles.

In \S~\ref{angles_properties}, 
we provide a foundation and give necessary tools for the rest of the paper, 
see also \cite{bs2010} and references there.  
In \S~\ref{princ_vectors_1}, we
introduce principal invariant subspaces and prove that they are
connected by the isometry that appears in the polar decomposition
of the product of corresponding orthogonal projectors. We define
point angles 
by analogy with the point operator spectrum and
consider peculiar properties of the invariant subspaces
corresponding to a point angle. In \S~\ref{prox_angl}, 
the Hausdorff distance is used to measure the change in the principal angles,
where one of the subspaces varies, extending some of our previous results
of \cite{ka02,ka05} to infinite dimensional subspaces.

We consider two applications of the angles: to bound the change in
Ritz values, where the Rayleigh-Ritz method is applied to different
infinite dimensional trial subspaces, in \S \ref{ritz_val};
and to analyze and accelerate the convergence of the
classical alternating projectors method (e.g.,\ \cite[Chapter IX]{MR1823556})
in the context of a specific example---a domain decomposition method (DDM) with an overlap, 
in \S \ref{DDM}.
In computer simulations the subspaces involved are evidently
finite dimensional; however, the assumption of the finite dimensionality
is sometimes irrelevant in theoretical analysis of the methods.

In \S \ref{ritz_val}, we consider
the Rayleigh-Ritz method for a bounded selfadjoint operator $A$
on a trial subspace $\ff$ of a Hilbert space, where
the spectrum  $\Si((P_{\ff}A)|_{\ff})$
of the restriction to the subspace $\ff$ of the
product of the orthoprojector $P_\ff$ onto $\ff$ and the operator $A$
is called the set of Ritz values, corresponding to $A$ and $\ff$.
In the main result of \S \ref{ritz_val}, we bound the change in the Ritz values,
where one trial subspace $\ff$ is replaced with another subspace $\gg$,
using the Hausdorff distance between the sets of Ritz values,
by the spread of the spectrum times the gap between the subspaces.
The proof of the general case is based on a specific case
of one dimensional subspaces $\ff$ and $\gg$,
spanned by unit vectors $f$ and $g,$ correspondingly,
where 
the estimate becomes particularly simple:
$
\left|(f,Af)-(g,Ag)\right|\leq
\left(\lambda_{\max} - \lambda_{\min}\right)\sin(\theta(f,g));
$
here $\lambda_{\max} - \lambda_{\min}$ is the spread of the spectrum of $A$, cf. \cite{ka03}.
If in addition $f$ or $g$ is an eigenvector of $A$, the same bound holds but with the sine squared---similarly,
our Hausdorff distance bound involves the gap squared, assuming that one of the trial
subspaces is $A$-invariant.
The material of  \S \ref{ritz_val} generalizes some of the earlier results of
\cite{ka05,ka07} and \cite{ko06} for the finite dimensional case.
The Rayleigh-Ritz method
with infinite dimensional trial subspaces is used in
the method of intermediate problems for determining
two-sided bounds for eigenvalues, e.g.,\
\cite{MR0400004,MR0477971}. The results of \S \ref{ritz_val}
may be useful in obtaining {\it a priori} estimates of
the accuracy of the method of intermediate problems, but
this is outside of the scope of the present paper.

Our other application, in \S~\ref{DDM}, is the classical alternating projectors method:
$e^{(i+1)} = \fg e^{(i)},$ $e^{(0)} \in \ff$, where
$\ff$ and $\gg$ are two given subspaces
and  $\pf$ and $\pg$ are the orthogonal projectors onto $\ff$
and $\gg$, respectively.
If $\left\|\left.(\fg)\right|_\ff\right\|<1$ then the sequence of vectors
 $ e^{(i)}$ evidently converges to zero. Such a situation is typical if
$ e^{(i)}$ represents an error of an iterative method,
e.g., a multiplicative DDM,
so that the alternating projectors method
describes the error propagation in the DDM, e.g.,\  \cite{MR992008,MR1009556}.

If the intersection $\ff \cap \gg$ is nontrivial
then the sequence of vectors $ e^{(i)}$ converges under reasonable assumptions to
the orthogonal projection of $ e^{(0)}$ onto $\ff \cap \gg$ as in
the von Neumann-Halperin method, see \cite{MR0032011,MR0141978}, and \cite{MR1990157}.
Several attempts to estimate and accelerate the convergence of
alternating projectors method are made, e.g.,\
\cite{MR1853223,MR1990157}, and \cite{MR1896233}.
Here, we use a different approach,
known in the DDM context, e.g.,\  \cite{MR992008,MR1009556},
but apparently novel in the context of the von Neumann-Halperin method,
and suggest the ultimate, conjugate gradient based,
acceleration of the von Neumann-Halperin alternating projectors method.

Our idea of the acceleration is inspired by the following facts.
On the one hand, every selfadjoint non-negative non-expansion $A, \, 0\leq A \leq I$ in
a Hilbert space $\hh$ can be extended to an orthogonal projector $\pg$ in the space
$\hh\times \hh$, e.g., \cite{halmos69,RiSN}, and, thus, is unitarily equivalent to
a product of two orthogonal projectors $\fg$ restricted to the subspace
$\ff = \hh \times \{0\}$.
Any polynomial iterative method that involves as a main step a multiplication of
a vector by $A$ can thus be called an ``alternating projectors'' method.
On the other hand, the conjugate gradient method
is the optimal polynomial method for computing the null space of $A,$
therefore the conjugate gradient approach provides the ultimate acceleration
of the alternating projectors method.

We give in \S \ref{DDM} the corresponding convergence rate estimate in terms of the angles.
We illustrate a possible acceleration for the DDM with a small
overlap for the 1D diffusion equation. The convergence of the classical
alternating projectors method degrades when the overlap gets smaller, but the
conjugate gradient method we describe converges to the exact solution
in two iterations. For a finite difference approximation
of the 1D diffusion equation a similar result can be found in \cite{MR2058877}.

This paper is partially based on \cite{juju_thesis}, where
simple proofs that we skip here can be found.
\section{Definition and Properties of the Angles}\label{angles_properties}
Here we define angles from one subspace to another and angles between subspaces,
and investigate the properties of the (sets of) angles,
such as the relationship concerning angles
between the subspaces
and their orthogonal complements. We express the gap and the minimum
gap between subspaces in terms of angles. We introduce principal invariant subspaces and
prove that they are connected by an isometry that appears in the polar decomposition of the
product of corresponding orthogonal projectors. We define point angles
and their multiplicities by analogy with the
point operator spectrum, and consider peculiar properties of the invariant subspaces corresponding
to a point angle.
\subsection{Preliminaries}\label{prelims}
Let $\hh$ be a (real or complex) Hilbert space and let $\ff$ and $\gg$ be proper nontrivial subspaces.
A subspace is defined as a {\em closed}
linear manifold. Let $\pf$ and $\pg$ be the orthogonal projectors onto $\ff$
and $\gg$, respectively. We denote by $\bb(\hh)$ the Banach space of bounded linear operators defined
on $\hh$ with the induced norm.
We use the same notation $\|\cdot\|$
for the vector norm on $\hh$, associated with the inner product $(\cdot,\cdot)$ on $\hh$,
as well as for the induced operator norm on $\bb(\hh)$.
For $T\in\bb(\hh)$ we define $|T|=\sqrt{T^*T}$, using the positive square root.
$T|_U$ denotes the restriction of the operator T to its invariant subspace $U$.
By $\dd(T)$, $\rr(T)$, $\nn(T)$, $\Si(T)$, and  $\Si_p(T)$ we denote the domain, range,
null space, spectrum, and point spectrum, respectively, of the operator $T$.
In this paper, we distinguish
only between finite and infinite dimensions. If $q$ is a finite number
then we set by definition $\min \{q, \, \infty \}=q$ and $\max
\{q, \, \infty \}= \infty,$ and assume that $\infty \leq \infty$ holds.
We use $\oplus$ to highlight that the sum of subspaces is orthogonal and for
the corresponding sum of operators.
We denote the $\ominus$ operation between subspaces $\ff$ and $\gg$ by 
$\ff\ominus\gg=\ff\cap\gg^\perp.$

Introducing an orthogonal decomposition
$\hh=\moo\oplus\moi \oplus \mio \oplus \mii \oplus \mm,$
where
\[
\moo=\ff \cap \gg, \enspace \moi=\ff \cap \go, \enspace \mio=\fo \cap \gg, \enspace
\mii=\fo \cap \go,
\]
(see, e.g.,\ \cite{{halmos69},{davis58}}),
we note that every subspace in the decomposition is
$\pf$ and $\pg$ invariant.

\begin{definition}(See \cite{halmos69}).  
Two subspaces $\ff\subset\hh$ and $\gg\subset\hh$ are said to be 
in \emph{generic position} within the space $\hh$, if all four subspaces 
$\moo,\,\moi,\,\mio$, and $\mii$ are null-dimensional. 
\end{definition}
Clearly, subspaces $\ff\subset\hh$ and $\gg\subset\hh$ are 
in generic position within the space $\hh$ iff 
any of the pairs of 
subspaces: $\ff\subset\hh$ and $\go\subset\hh$, or 
$\fo\subset\hh$ and $\gg\subset\hh$, or 
$\fo\subset\hh$ and $\go\subset\hh$,
is in generic position within the space $\hh$.

The fifth part, $\mm$, can be further orthogonally split in two different ways as follows:
\begin{itemize}
\item
$\mm=\mf\oplus\mfo$ with 
$\mf=\ff\ominus(\moo\oplus\moi),\enspace \mfo=\fo\ominus(\mio\oplus\mii),$ or 
\item
$\mm=\mg\oplus \mgo$ with
$\mg=\gg\ominus (\moo\oplus\mio),\enspace\mgo=\go\ominus(\moi\oplus\mii).$
\end{itemize}
We obtain orthoprojectors' decompositions 
\[
\pf=I_{\moo}\oplus I_{\moi}\oplus 0_{\mio}\oplus  0_{\mii} \oplus \pf|_{\mm}
\text{ and }
\pg=I_{\moo}\oplus 0_{\moi}\oplus I_{\mio}\oplus  0_{\mii} \oplus \pg|_{\mm},
\]
and decompositions of their products:
\[
(\fg)|_\ff=I_{\moo}\oplus 0_{\moi} \oplus (\fg)|_{\mf},  
\text{ and }
(\gf)|_\gg=I_{\moo}\oplus 0_{\mio} \oplus (\gf)|_{\mg}.
\]
These decompositions are very useful in the sequel. In the next theorem
we apply them to prove the unitary equivalence of the
operators $\fgf$ and $\gfg$.
\begin{theorem}\label{andrew_ilya}
Let $\ff$ and $\gg$ be subspaces of $\hh$. Then there exists a unitary operator $W\in\bb(\hh)$
such that $\fgf = W^* \gfg W.$
\end{theorem}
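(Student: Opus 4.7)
The plan is to construct $W$ blockwise using the orthogonal decomposition $\hh=\moo\oplus\moi\oplus\mio\oplus\mii\oplus\mm$ recalled just before the theorem. On each of the four ``trivial'' summands both $\pf$ and $\pg$ restrict to $0$ or $I$ and hence commute, so $\fgf$ and $\gfg$ literally agree on $\moo\oplus\moi\oplus\mio\oplus\mii$; there I take $W$ to be the identity. The problem thus reduces to producing a unitary $W_{\mm}\in\bb(\mm)$ satisfying $(\fgf)|_{\mm}=W_{\mm}^{*}(\gfg)|_{\mm}W_{\mm}$, after which $W=I\oplus W_{\mm}$ is the required operator on $\hh$.

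On $\mm$ the pair $(\mf,\mg)$ is in generic position. I would set $T=(\pg\pf)|_{\mm}$, so that $T^{*}T=(\fgf)|_{\mm}$ and $TT^{*}=(\gfg)|_{\mm}$, and take its polar decomposition $T=V|T|$, where $V\in\bb(\mm)$ is a partial isometry and $|T|=(T^{*}T)^{1/2}$. The standard polar-decomposition identity $TT^{*}=V(T^{*}T)V^{*}$ is precisely the intertwining we need, on the initial and final spaces of $V$. Generic position makes these spaces easy to identify: $\pf|_{\mm}$ is the orthogonal projector onto $\mf$, and $\pg|_{\mm}$ is injective on $\mf$ since a nonzero $x\in\mf\cap\go\subset\ff\cap\go=\moi$ would contradict $\mf\perp\moi$. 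Hence $\nn(T)=\mfo$ and the initial space of $V$ is $\mf$; the mirror argument applied to $T^{*}$ gives final space $\mg$. Thus $V$ is already a unitary from $\mf$ onto $\mg$.

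The last step, which I expect to be the main obstacle, is to extend $V$ to a unitary $W_{\mm}$ on $\mm=\mf\oplus\mfo=\mg\oplus\mgo$; for this it suffices to exhibit a unitary $U:\mfo\to\mgo$, which in turn follows once $\mfo$ and $\mgo$ are shown to have the same Hilbert-space dimension. I would obtain this dimension equality by re-running the polar-decomposition argument on the auxiliary operators $(\pfo\pg)|_{\mm}$ and $(\pf\pgo)|_{\mm}$. For the first, a kernel vector $x\in\mm$ satisfies $\pg x\in\ff\cap\mm=\mf$, whence $\pg x\in\mf\cap\mg\subset\moo$ and $\pg x=0$ by $\mm\perp\moo$, so the kernel equals $\mgo$; a vector in $\mfo$ orthogonal to the range lies in $\mfo\cap\mgo\subset\mii$ and is likewise zero by $\mm\perp\mii$, so the range is dense in $\mfo$. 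The polar factor is therefore a unitary $\mg\to\mfo$, and the analogous analysis of $(\pf\pgo)|_{\mm}$ yields a unitary $\mf\to\mgo$. Combined with the unitary $\mf\to\mg$ provided by $V$, these give $\mfo\cong\mgo$. Choosing any such $U$, the operator that acts as $V$ on $\mf$ and as $U$ on $\mfo$ is the required unitary $W_{\mm}$: the identity $\fgf=W_{\mm}^{*}\gfg W_{\mm}$ holds on $\mf$ by the polar decomposition and trivially on $\mfo$, since $\fgf$ vanishes on $\mfo$ while $\gfg$ vanishes on $W_{\mm}(\mfo)=\mgo$.
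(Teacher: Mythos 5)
Your proof is correct and follows essentially the same route as the paper's: a polar decomposition of the product of the two projectors, with the resulting partial isometry extended to a unitary once the defect subspaces $\mfo$ and $\mgo$ are shown to be isomorphic. The only substantive difference is that where the paper reduces (via the same five-parts decomposition) to the isomorphism of $\mfo$ and $\mgo$ and then cites Halmos's theorem that two subspaces in generic position are isomorphic, you establish that isomorphism directly by running the polar-decomposition argument on the auxiliary operators $(\pfo\pg)|_{\mm}$ and $(\pf\pgo)|_{\mm}$ --- a self-contained substitute for that citation.
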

\begin{proof}
Denote $T=\gf$. Then $T^*=\fg$ and 
$T^*T=\fgf.$ Using, e.g.,\
\cite[\S 110, p. 286]{RiSN} or \cite[\S VI.2.7, p. 334]{kato}, 
we introduce the polar decomposition,  
 $T=U|T|$, where $|T|=\sqrt{T^*T}=\sqrt{\fgf}$ is selfadjoint and nonnegative 
and $U:\,{\rr(|T|)}\to{\rr(T)}$ is an isometry. 
We extend $U$ by continuity, keeping the same notation, to 
the isometry $U:\,\overline{\rr(|T|)}\to\overline{\rr(T)}$. 
It is easy to check directly that $\nn(|T|)=\nn(T)$, so 
$\overline{\rr(|T|)}=(\nn(T))^\perp$ since $|T|$ is selfadjoint. 
Taking also into account that  $\overline{\rr(T)}=(\nn(T^*))^\perp$, 
we have $U:\,(\nn(T))^\perp\to(\nn(T^*))^\perp$. 

For a general operator 
$T\in\bb(\hh)$, the isometry $U$ is then typically extended to a partial isometry 
$U\in\bb(\hh)$ by setting $U=0$ on $\nn(T)$. For our special $T=\gf$, 
we can do better and extend $U$ to a unitary operator $W\in\bb(\hh)$. 
Indeed, we set $W=U$ on $(\nn(T))^\perp$ to make $W$ an extension of $U$. 
To make $W$ unitary, we set $W=V$ on $\nn(T)$, where 
$V:\,\nn(T)\to\nn(T^*)$ must be an isometry. 
The specific form of $V$ is of no importance, since it evidently does not affect 
the validity of the formula $\gf=W\sqrt{\fgf}$, which implies 
$\fg=\sqrt{\fgf}W^*$. Multiplying these equalities we obtain the required 
$\gfg=W\fgf W^*$.

For the existence of such $V$, it is  
sufficient (and, in fact,  necessary) 
that $\nn(T^*)=\nn(\fg)$ and $\nn(T)=\nn(\gf)$ be isomorphic.
Using the five-parts decomposition,
we get 
\[\nn(\fg)=\moi \oplus \mio \oplus \mii \oplus \nn((\fg)|_\mm),\,
\nn(\gf)=\moi \oplus \mio \oplus \mii\oplus \nn((\gf)|_\mm).
\]
The first three terms in the decompositions of $\nn(\fg)$ and $\nn(\gf)$
are the same, so $\nn(\fg)$ and $\nn(\gf)$ are isomorphic iff
the last terms $\nn((\fg)|_\mm)=\mgo$ and $\nn((\gf)|_\mm)=\mfo$ are
isomorphic. The subspaces $\mf=\pf\mm\subseteq\mm$ and $\mg=\pg\mm\subseteq\mm$
are in generic position within the space $\mm$, see
\cite{halmos69}, as well as their orthogonal in $\mm$ complements
$\mfo$ and $\mgo$. According to \cite[Proof of Theorem 1, p. 382]{halmos69},
any two subspaces in generic position are isomorphic, thus 
$\nn(\fg)$ and $\nn(\gf)$ are isomorphic.
\end{proof}
\begin{cor}\label{equiv_restric}
The operators $(\fg)|_{\mf}$ and $(\gf)|_{\mg}$ are unitarily equivalent.
\end{cor}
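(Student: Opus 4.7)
The plan is to mimic the polar decomposition argument of Theorem~\ref{andrew_ilya}, but applied to the restriction of $\gf$ to the ``generic'' component $\mm$ rather than to all of $\hh$. First, I observe that $\mm$ is invariant under both $\pf$ and $\pg$, because each of the four subspaces $\moo,\moi,\mio,\mii$ is invariant under both projectors, so their direct sum is, and hence so is its orthogonal complement $\mm$. Consequently, $T:=(\gf)|_{\mm}$ is a bounded operator on $\mm$ with adjoint $T^*=(\fg)|_{\mm}$, and $T^*T=\fgf|_{\mm}$, $TT^*=\gfg|_{\mm}$.

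Next, I use the orthogonal splittings $\mm=\mf\oplus\mfo=\mg\oplus\mgo$. Since $\mfo\subseteq\fo$, the projector $\pf$ kills $\mfo$ and acts as the identity on $\mf\subseteq\ff$, so a short computation gives
\[
T^*T=(\fg)|_{\mf}\oplus 0_{\mfo},\qquad TT^*=(\gf)|_{\mg}\oplus 0_{\mgo}.
\]
In particular, $\nn(T)=\mfo$ and $\nn(T^*)=\mgo$, so $\overline{\rr(|T|)}=\mf$ and $\overline{\rr(T)}=\mg$.

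Finally, I invoke the polar decomposition $T=U|T|$. By its very construction, $U$ is an isometric isomorphism from $\overline{\rr(|T|)}=\mf$ onto $\overline{\rr(T)}=\mg$, i.e.\ a unitary operator $\mf\to\mg$. Notably, unlike in Theorem~\ref{andrew_ilya}, no extension of $U$ to the entire ambient space is required here, so generic position need not be invoked. The standard intertwining $TT^*=U\,T^*T\,U^*$, restricted via the direct-sum decompositions above to $\mf$ on one side and $\mg$ on the other, then reads
\[
(\gf)|_{\mg}=U\,(\fg)|_{\mf}\,U^*,
\]
which is precisely the asserted unitary equivalence. The only mildly delicate step is the bookkeeping with the five-part decomposition to identify $(\fg)|_{\mf}$ and $(\gf)|_{\mg}$ as the nontrivial summands of $T^*T$ and $TT^*$; once that is in place, the intertwining property built into the polar decomposition closes the argument.
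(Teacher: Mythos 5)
Your argument is correct, and it reaches the conclusion by a route that is genuinely self-contained rather than by restricting the global result. The paper's own proof of this corollary is a two-line deduction from Theorem~\ref{andrew_ilya}: it writes $\fgf=(\fg)|_{\mf}\oplus I_{\moo}\oplus 0$ and $\gfg=(\gf)|_{\mg}\oplus I_{\moo}\oplus 0$ and then restricts the already-constructed unitary $W$ (which intertwines $\fgf$ and $\gfg$ on all of $\hh$) to the summands supported on $\mf$ and $\mg$. You instead perform a fresh polar decomposition of $T=(\gf)|_{\mm}$ on the invariant ``fifth part'' $\mm$ alone, identify $\nn(T)=\mfo$ and $\nn(T^*)=\mgo$ from the splittings $T^*T=(\fg)|_{\mf}\oplus 0_{\mfo}$ and $TT^*=(\gf)|_{\mg}\oplus 0_{\mgo}$ (the injectivity of $(\fg)|_{\mf}$ needed here follows from $\mf\cap\go\subseteq\moi$ and $\mf\perp\moi$, which you should perhaps state explicitly), and use the intertwining $TT^*=U\,T^*T\,U^*$ built into the polar decomposition. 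What this buys you is a real economy: the corollary then does not depend on the unitary \emph{extension} of $U$ to all of $\hh$, and hence not on the Halmos generic-position isomorphism $\nn(\fg)\cong\nn(\gf)$ that the paper needs to construct $W$; only the partial isometry on its natural initial space $\mf$ is used. The trade-off is that the paper's version records the stronger fact that a single global unitary $W$ simultaneously intertwines the full operators $\fgf$ and $\gfg$ and carries $\mf$ onto $\mg$, which is reused later (e.g.\ in Theorem~\ref{inv-proj}), whereas your unitary lives only on $\mf\to\mg$. Both proofs are sound; yours is the more elementary path to this particular corollary.
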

\begin{proof}
We have that
$\fgf=(\fg)|_{\mf} \oplus I_{\moo} \oplus 0_{\hh \ominus (\moo \oplus \mf)}$ and
$\gfg=(\gf)|_{\mg} \oplus I_{\moo} \oplus 0_{\hh \ominus (\moo \oplus \mg)}$.
The subspaces $\mf$ and $\mg$ are connected by $\mf=W \mg$, $\mg=W^* \mf$,
and $\fgf = W^* \gfg W$. 
\end{proof}

In the important particular case $\|\pf-\pg\|<1$, subspaces $\ff$ and $\gg$ are isometric and 
\citet[\S VII.105]{RiSN} explicitly describe a partial isometry 
\[U=\pg[I+\pf(\pg-\pf)\pf]^{-1/2}\pf\] 
that maps $\ff$ one-to-one and onto $\gg$. On $\ff$, clearly 
$I+\pf(\pg-\pf)\pf$ is just the same as $\fgf,$ so this $U$ represents the 
partial isometry in the polar decomposition in the proof of our Theorem \ref{andrew_ilya},  
in this case. Let 
\[V=(I-\pg)[I+(I-\pf)((I-\pg)-(I-\pf))(I-\pf)]^{-1/2}(I-\pf)\]
be another partial isometry that maps $\fo$ one-to-one and onto $\go$, 
constructed in the same way as $U$.
Setting $W=U+V$, we extend $U$  
from the subspace $\ff$ to a unitary operator $W$ on the whole space. 
The sum $W=U+V$ is the same as the unitary extension suggested in 
\citet[\S I.4.6, \S I.6.8]{kato} and \citet{DavKah}:
\begin{eqnarray}\label{dav_ka_ka}
W&=&[\gf+(I-\pg)(I-\pf)][I-(\pf-\pg)^2]^{-1/2}\\
&=&[(I-\pg)(I-\pf)+\pf\pg]^{-1/2}[\pg\pf+(I-\pg)(I-\pf)]\nonumber
\end{eqnarray}%
(the second equality holds since the corresponding terms in square brackets are 
the same and $(\pf-\pg)^2$ commutes both with $\pf$ and $\pg$), which is used there 
to prove the unitary equivalence $\pf=W^*\pg W$. It is easy to check 
directly that the operator $W$ is unitary and that on $\ff$ it acts the same as 
the operator $U$, so it is indeed 
a unitary extension of $U$. If $\|\pf-\pg\|<1$,  
Theorem \ref{andrew_ilya} holds with this choice of $W$. 

In the next subsection we define angles from-to and between subspaces using 
the spectrum of the product of two orthogonal projectors. 
Our goal is to develop a theory of angles 
from-to and between subspaces based on the well-known spectral theory 
of selfadjoint bounded operators. 
\subsection{Angles From--To and Angles Between Subspaces}\label{from_between}
\begin{definition}\label{angles_from_cos}
$
\hat{\Theta}(\ff,\gg)=\{\theta:\theta=\arccos(\si), \si\geq0,
\sis\in\Sigma((\fg)|_{\ff}) \} \subseteq [0,{\pi}/{2}]
$
is called the set of angles \emph{from} the subspace $\ff$ to the subspace $\gg$. 
Angles 
$\Theta(\ff, \gg)=\hat{\Theta}(\ff, \gg) \cap \hat{\Theta}(\gg, \ff)$
are called angles \emph{between} the subspaces $\ff$ and $\gg$.
\end{definition}
Let the operator $T \in \bb(\hh)$
be a selfadjoint nonnegative contraction. Using an extension of
$T$ to an orthogonal projector \cite[\S A.2, p. 461]{RiSN}, there
exist subspaces $\ff$ and $\gg$ in $\hh^2$ such that $T$ is
unitarily equivalent to $(\fg)|_{\ff}$, where $\pf$ and $\pg$ are
the corresponding orthogonal projectors in $\hh^2$. This implies
that the spectrum of the product of two orthogonal projectors is as
general a set as the spectrum of an arbitrary selfadjoint
nonnegative contraction,
so the set of angles between subspaces can be a sufficiently general
subset of $[0,{\pi}/{2}]$. 
\begin{definition}\label{multiplicity_of_angle}
The angles 
$
\hat{\Theta}_p(\ff, \gg)= \left\{\theta\in\hat{\Theta}(\ff,\gg):
{\cos}^2 (\theta)\in\Sigma_p\left((\fg)|_\ff\right)\right\}$
and
$\Theta_p(\ff,\gg)=\hat{\Theta}_p(\ff,\gg)\cap\hat{\Theta}_p(\gg,\ff)$
are called {\em point} angles.
Angle $\theta\in\hat\Theta_p(\ff,\gg)$ inherits its multiplicity from
${\cos}^2 (\theta)\in\Sigma_p\left((\fg)|_\ff\right)$.
Multiplicity of angle $\theta\in\Theta_p(\ff,\gg)$
is the minimum of multiplicities of $\theta\in\hat\Theta_p(\ff,\gg)$
and $\theta\in\hat\Theta_p(\gg,\ff)$.
\end{definition}

For two vectors $f$ and $g$ in the plane, and their orthogonal
counterparts $f^\perp$ and $g^\perp$ we evidently have
that  $\theta(f,g) = \theta(f^\perp,g^\perp)$ and
 $\theta(f,g) + \theta(f,g^\perp)=\pi/2.$
We now describe relationships for angles, corresponding to
 subspaces $\ff,\gg,\fo,$ and $\go.$
We first consider the angles from one subspace to another
as they reveal the finer details and provide a foundation for
statements on angles between subspaces.
\begin{theorem}\label{7_relations}
For any pair of subspaces $\ff$ and $\gg$ of $\hh$:
\begin{enumerate}
\item $\hat{\Theta}(\ff, \go)={\pi}/{2}-\hat{\Theta}(\ff, \gg)$;
\item $\hat{\Theta}(\gg, \ff) \setminus \{{\pi}/{2} \} =
       \hat{\Theta}(\ff, \gg) \setminus \{{\pi}/{2} \}$;
\item $\hat{\Theta}(\fo, \gg) \setminus ( \{0\} \cup \{{\pi}/{2} \} ) =
       {\pi}/{2}- \{\hat{\Theta}(\ff, \gg) \setminus ( \{0\} \cup \{{\pi}/{2} \} ) \}$;
\item $\hat{\Theta}(\fo, \go) \setminus ( \{0\} \cup \{{\pi}/{2} \} ) =
       \hat{\Theta}(\ff, \gg) \setminus ( \{0\} \cup \{{\pi}/{2} \} )$;
\item $\hat{\Theta}(\gg, \fo) \setminus \{ 0 \} =
       {\pi}/{2}- \{\hat{\Theta}(\ff, \gg) \setminus \{ {\pi}/{2} \} \}$;
\item $\hat{\Theta}(\go, \ff) \setminus \{{\pi}/{2} \} =
       {\pi}/{2}- \{\hat{\Theta}(\ff, \gg) \setminus \{ 0 \} \}$;
\item $\hat{\Theta}(\go, \fo) \setminus \{ 0 \} =
       \hat{\Theta}(\ff, \gg) \setminus \{ 0 \}$.
\end{enumerate}
\begin{table}[ht]
\caption{Multiplicities of $0$ and $\pi/2$ angles for different pairs of subspaces}\label{t1}
\renewcommand\arraystretch{1.5}
\noindent\[
\begin{array}{|c|c|c||c|c|c|}
\hline
{\text{Pair}}&{\theta=0}&{\theta=\pi/2}&{\text{Pair}}&{\theta=0}&{\theta=\pi/2}\\
\hline
{\hat{\Theta}(\ff, \gg)}&{\dim \moo}&{\dim \moi}&{\hat{\Theta}(\gg, \ff)}&{\dim \moo}&{\dim \mio}\\
\hline
{\hat{\Theta}(\ff, \go)}&{\dim \moi}&{\dim \moo}&{\hat{\Theta}(\gg, \fo)}&{\dim \mio}&{\dim \moo}\\
\hline
{\hat{\Theta}(\fo, \gg)}&{\dim \mio}&{\dim \mii}&{\hat{\Theta}(\go, \ff)}&{\dim \moi}&{\dim \mii}\\
\hline
{\hat{\Theta}(\fo, \go)}&{\dim \mii}&{\dim \mio}&{\hat{\Theta}(\go, \fo)}&{\dim \mii}&{\dim \moi}\\
\hline
\end{array}
\]
\end{table}
The multiplicities of the point angles $\theta \in (0, {\pi}/{2})$ in $\hat{\Theta}(\ff, \gg)$,
$\hat{\Theta}(\fo, \go)$, $\hat{\Theta}(\gg, \ff)$ and $\hat{\Theta}(\go, \fo)$ are the same, and are equal
to the multiplicities of the point angles $ {\pi}/{2} - \theta  \in (0, {\pi}/{2})$ in
$\hat{\Theta}(\ff, \go)$, $\hat{\Theta}(\fo, \gg)$, $\hat{\Theta}(\gg, \fo)$ and
$\hat{\Theta}(\go, \ff).$
\end{theorem}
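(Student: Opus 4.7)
The plan is to reduce every assertion---the seven set identities, the multiplicity table, and the final multiplicity sentence---to a spectral computation on the \emph{generic} part $\mm$ of the five-part decomposition. Once $\mf$ and $\mg$ are recognized as being in generic position within $\mm$, the Halmos normal form for two projectors in generic position turns each claim into a short verification.

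First, for each of the eight pairs appearing in Table~\ref{t1}, apply the decomposition that precedes Theorem~\ref{andrew_ilya} to write, for instance,
\[
(\fg)|_\ff = I_{\moo} \oplus 0_{\moi} \oplus (\fg)|_{\mf},
\quad
(\pf P_{\go})|_\ff = I_{\moi} \oplus 0_{\moo} \oplus (\pf P_{\go})|_{\mf},
\]
and analogously for the remaining six restrictions; only the first two summands permute according to which of $\ff,\fo$ pairs with which of $\gg,\go$, whereas $\mm,\mf,\mfo,\mg,\mgo$ are intrinsic to the pair $\{\ff,\gg\}$. Next, identify $\mm=\mathcal{K}\oplus\mathcal{K}$ so that
\[
P_{\mf}=\begin{pmatrix}I&0\\0&0\end{pmatrix},\qquad
P_{\mg}=\begin{pmatrix}C^2&CS\\CS&S^2\end{pmatrix},
\]
with selfadjoint nonnegative contractions $C,S$ of trivial null space satisfying $C^2+S^2=I$; direct computation then yields
\[
(P_{\mf}P_{\mg})|_{\mf}=C^2,\ (P_{\mf}P_{\mgo})|_{\mf}=S^2,\ (P_{\mfo}P_{\mgo})|_{\mfo}=C^2,\ (P_{\mfo}P_{\mg})|_{\mfo}=S^2,
\]
and the injectivity of $C,S$ guarantees $0,1\notin\Sigma_p(C^2)\cup\Sigma_p(S^2)$, so the generic part contributes no atoms to $\Sigma_p$ at either endpoint. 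Combined with the decompositions from the previous sentence, this gives all eight entries of Table~\ref{t1}.

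The seven set identities follow by direct inspection. Relation~(1) uses the swap $\moo\leftrightarrow\moi$ in the two displayed decompositions together with $\arccos\sigma+\arccos\sqrt{1-\sigma^2}=\pi/2$ on the generic part. Relation~(4) is immediate from $(P_\mf P_\mg)|_{\mf}=(P_{\mfo}P_{\mgo})|_{\mfo}$ once one notes that all non-generic contributions land in $\{0,\pi/2\}$, which is excluded. Relations~(2) and~(7) follow from Corollary~\ref{equiv_restric} applied to the pairs $(\ff,\gg)$ and $(\fo,\go)$ respectively, combined with the fact that the only disagreement on non-generic pieces is between $\moi$ and $\mio$, both producing the angle $\pi/2$. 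The remaining relations~(3), (5), (6) are obtained by composing the preceding three---for instance (3) is (1) applied to $(\fo,\gg)$ followed by (4). For the final multiplicity statement, angles $\theta\in(0,\pi/2)$ arise solely from the generic part; Corollary~\ref{equiv_restric} matches multiplicities for $(\ff,\gg)\leftrightarrow(\gg,\ff)$ and for $(\fo,\go)\leftrightarrow(\go,\fo)$, while $\sigma^2\mapsto1-\sigma^2$ is a multiplicity-preserving bijection $\Sigma_p(C^2)\to\Sigma_p(S^2)$, handling the passage to an orthogonal complement on one side. The principal obstacle is pure bookkeeping: seven relations across eight pairs force many small case distinctions, and one must carefully track which of $\moo,\moi,\mio,\mii$ contributes the eigenvalue $1$ and which contributes $0$ in each restriction, and whether the excluded set in a given relation is $\{0\}$, $\{\pi/2\}$, or $\{0,\pi/2\}$; but no new analytic ingredient beyond the Halmos template and Corollary~\ref{equiv_restric} is needed.
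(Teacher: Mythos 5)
Your proposal is correct and covers every assertion of the theorem, but it reaches them by a somewhat different route than the paper. The shared skeleton is the five-parts decomposition, Corollary \ref{equiv_restric} for the order-swap identities, and the derivation of (3)--(7) by composing (1)--(2) over complemented pairs; the difference lies in how the two base facts are verified. The paper proves (1) on all of $\ff$ at once from the operator identity $(\pf\pgo)|_\ff=I|_\ff-(\fg)|_\ff$ and the spectral mapping theorem for $\lambda\mapsto 1-\lambda$ (which also transports eigenvalue multiplicities via $\nn(T-\lambda I)=\nn((I-T)-(1-\lambda)I)$), and then merely asserts that the entries of Table \ref{t1} are ``checked directly'' from the five-parts decomposition. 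You instead invoke the Halmos $2\times2$ normal form for the generic pair $\mf,\mg\subseteq\mm$ --- a heavier imported tool, but one that buys a completely uniform verification: the eight restricted products become $C^2$ or $S^2$ up to unitary equivalence; the injectivity of $C$ and $S$ shows at a stroke that the generic part contributes no point spectrum at $0$ or $1$, so every table entry is exactly the dimension of the appropriate corner subspace; identity (4) becomes the one-line computation $(P_{\mfo}P_{\mgo})|_{\mfo}=C^2=(P_{\mf}P_{\mg})|_{\mf}$ rather than a chain through (1) and (2); and the closing multiplicity sentence reduces to the observation that $\sigma^2\mapsto1-\sigma^2$ is a multiplicity-preserving bijection between $\Sigma_p(C^2)$ and $\Sigma_p(S^2)$. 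One small bookkeeping caveat: for (7), Corollary \ref{equiv_restric} applied to the pair $(\fo,\go)$ only identifies $\hat{\Theta}(\go,\fo)$ with $\hat{\Theta}(\fo,\go)$ away from $\pi/2$; to land on $\hat{\Theta}(\ff,\gg)$ you still need your normal-form identity underlying (4) together with the fact that both sets carry $\pi/2$ with multiplicity $\dim\moi$ --- both of which you have established, so this is a matter of phrasing rather than a gap.
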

\begin{proof}
(1) Using the equalities $(\pf \pgo)|_\ff = \pf|_\ff - (\fg)|_\ff
=I|_\ff - (\fg)|_\ff$ and the spectral mapping theorem
for $f(T)=I-T$ we have
$\Si((\pf \pgo)|_\ff)=1-\Si((\fg)|_\ff)$. Next, using the identity
$\nn(T-\lambda I) = \nn((I-T)-(1-\lambda)I)$, we conclude that
$\lambda$ is an eigenvalue of $(\fg)|_{\ff}$ if and only if
$1-\lambda$ is an eigenvalue of $(\pf \pgo)|_\ff$, and that their
multiplicities are the same.
\\ 
(2) The statement on nonzero angles follows
from Corollary \ref{equiv_restric}. The part concerning the zero angles follows from the
fact that $(\fg)|_{\moo}= (\gf)|_{\moo}=I|_{\moo}$.
\\ 
(3--7) All other statements can be obtained from
the (1--2) by exchanging the subspaces.
Table~\ref{t1} entries are checked directly using the five-parts decomposition.
\end{proof}

Theorem \ref{angles_different_pairs} and Table~\ref{t2}
relate the sets of angles between pairs of subspaces:
\begin{theorem}\label{angles_different_pairs}
For any subspaces $\ff$ and $\gg$ of $\hh$ the following equalities hold:
\begin{enumerate}
\item $\Theta(\ff, \gg) \setminus ( \{0\} \cup \{{\pi}/{2} \} ) =
\{ {\pi}/{2} - \Theta(\ff, \go) \} \setminus ( \{0\} \cup \{{\pi}/{2} \} );$
\item $\Theta(\ff, \gg) \setminus \{0 \} =
\Theta(\fo, \go) \setminus \{0 \};$
\item $\Theta(\ff, \go) \setminus \{0 \} =
\Theta(\fo, \gg) \setminus \{0 \}.$
\end{enumerate}
\begin{table}[ht]
\caption{Multiplicities of $0$ and $\pi/2$ angles between subspaces} \label{t2}
\renewcommand\arraystretch{1.5}
\noindent\[
\begin{array}{|c|c|c|}
\hline
{\text{Pair}}&{\theta=0}&{\theta=\pi/2}\\
\hline
{\Theta(\ff, \gg)}&{\dim \moo}&{\min \{ \dim \moi, \, \dim \mio \}}\\
\hline
{\Theta(\ff, \go)}&{\dim \moi}&{\min \{ \dim \moo, \, \dim \mii \}}\\
\hline
{\Theta(\fo, \gg)}&{\dim \mio}&{\min \{ \dim \moo, \, \dim \mii \}}\\
\hline
{\Theta(\fo, \go)}&{\dim \mii}&{\min \{ \dim \moi, \, \dim \mio \}}\\
\hline
\end{array}
\]
\end{table}
The multiplicities of the point angles $\theta$ in $\Theta(\ff,\gg)$
and $\Theta(\fo,\go)$ satisfying $0<\theta<{\pi}/{2}$
are the same, and equal to the multiplicities of point angles
$0 < {\pi}/{2} - \theta < {\pi}/{2}$ in $\Theta(\ff, \go)$ and $\Theta(\fo, \gg).$
\end{theorem}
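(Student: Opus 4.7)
The plan is to derive Theorem \ref{angles_different_pairs} by intersecting identities from Theorem \ref{7_relations} pairwise, since $\Theta(\uu,\vv)=\hat\Theta(\uu,\vv)\cap\hat\Theta(\vv,\uu)$ by definition, and then reading off endpoint multiplicities from Table \ref{t1}.

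First I would observe that, restricted to the open interval $(0,\pi/2)$, the eight relevant $\hat\Theta$ sets collapse into just two families. Items (2), (4), and (7) of Theorem \ref{7_relations} force
\[
\hat\Theta(\ff,\gg)\cap(0,\pi/2) = \hat\Theta(\gg,\ff)\cap(0,\pi/2) = \hat\Theta(\fo,\go)\cap(0,\pi/2) = \hat\Theta(\go,\fo)\cap(0,\pi/2),
\]
while items (1), (3), (5), and (6) force the complementary family
\[
\hat\Theta(\ff,\go)\cap(0,\pi/2) = \hat\Theta(\go,\ff)\cap(0,\pi/2) = \hat\Theta(\fo,\gg)\cap(0,\pi/2) = \hat\Theta(\gg,\fo)\cap(0,\pi/2),
\]
with this second set equal to $\bigl(\pi/2-\hat\Theta(\ff,\gg)\bigr)\cap(0,\pi/2)$. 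Intersecting the two $\hat\Theta$-sets that define each $\Theta(\uu,\vv)$ then immediately yields identities (1)--(3) on $(0,\pi/2)$.

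Second I would handle the endpoints $\theta=0$ and $\theta=\pi/2$ to derive Table \ref{t2} from Table \ref{t1}. Since $\Theta(\uu,\vv)$ is the intersection of $\hat\Theta(\uu,\vv)$ and $\hat\Theta(\vv,\uu)$, the multiplicity of either endpoint in $\Theta(\uu,\vv)$ is the minimum of its multiplicities in the two $\hat\Theta$ sets. Reading pairs of rows in Table \ref{t1} gives the $\theta=0$ column of Table \ref{t2} directly, as the two relevant rows always agree at $0$ (e.g., both yield $\dim\moo$ for $\Theta(\ff,\gg)$), whereas the $\theta=\pi/2$ column produces the genuine minima $\min\{\dim\moi,\dim\mio\}$ and $\min\{\dim\moo,\dim\mii\}$. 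Comparing Table \ref{t2} entries on both sides of identities (1)--(3) then extends them consistently across the endpoints and justifies the stated exclusions.

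Finally, for the multiplicity claim about point angles $\theta\in(0,\pi/2)$, I would apply Corollary \ref{equiv_restric} twice, once to the pair $(\ff,\gg)$ and once to $(\fo,\go)$. This gives the unitary equivalence of $(\fg)|_{\mf}$ with $(\gf)|_{\mg}$ and of $(\pfo\pgo)|_{\mfo}$ with $(\pgo\pfo)|_{\mgo}$, so the $\hat\Theta_p$-multiplicities agree within the first family and therefore pass to $\Theta_p(\ff,\gg)$ and $\Theta_p(\fo,\go)$. The spectral mapping $\Sigma((\pf\pgo)|_\ff)=1-\Sigma((\fg)|_\ff)$ from the proof of Theorem \ref{7_relations}(1), which preserves eigenvalue multiplicities via $\nn(T-\lambda I)=\nn((I-T)-(1-\lambda)I)$, then transfers the multiplicities across to the complementary family $\Theta_p(\ff,\go),\Theta_p(\fo,\gg)$. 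The main obstacle throughout is endpoint bookkeeping: Theorem \ref{7_relations} omits $0$ or $\pi/2$ inconsistently in its various items, so one must verify carefully via Table \ref{t2} that these omissions do not harm the identities in the stated form.
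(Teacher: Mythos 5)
Your overall route is the same as the paper's: both derive the three identities by intersecting the set identities of Theorem \ref{7_relations}, using $\Theta(\uu,\vv)=\hat{\Theta}(\uu,\vv)\cap\hat{\Theta}(\vv,\uu)$, and both obtain Table \ref{t2} by taking minima of the corresponding entries of Table \ref{t1}. The open-interval part of your argument and the transfer of point-angle multiplicities are fine.

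The gap is in your treatment of $\theta=\pi/2$ in identities (2) and (3), which exclude only $\{0\}$. You prove the identities on $(0,\pi/2)$ and then propose to ``extend them consistently across the endpoints'' by comparing Table \ref{t2} entries. But Table \ref{t2} records multiplicities of \emph{point} angles only, whereas membership of $\pi/2$ in $\hat{\Theta}(\ff,\gg)$ is governed by the full spectrum: $\pi/2\in\hat{\Theta}(\ff,\gg)$ iff $0\in\Sigma((\fg)|_\ff)$, which can happen with $\moi=\{0\}$ when $0$ lies in the continuous spectrum of $(\fg)|_{\mf}$ (the paper notes after Definition \ref{angles_from_cos} that this spectrum can be an essentially arbitrary closed subset of $[0,1]$). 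In that situation the $\pi/2$ column of Table \ref{t2} reads $0$ on both sides and tells you nothing about whether $\pi/2$ belongs to the sets, so your comparison does not justify the stated exclusions. The clean fix is the paper's: do not restrict to the open interval, but use Theorem \ref{7_relations}(7) as a full set identity --- apply it to the pair $(\ff,\gg)$ and to the pair $(\gg,\ff)$ and intersect, which carries $\pi/2$ across automatically; (3) then follows by replacing $\gg$ with $\go$ in (2). Alternatively, your argument can be repaired by noting that if $0\in\Sigma((\fg)|_\ff)$ is not an eigenvalue it must be an accumulation point of the spectrum, so $\pi/2$ is a limit of angles in $(0,\pi/2)$ and closedness of the spectra transports it across your open-interval identity; the remaining case, $0$ an isolated spectral point and hence an eigenvalue, is the only one your Table \ref{t2} comparison actually covers.
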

\begin{proof}
Statement  (1) follows from Theorem \ref{7_relations} since 
\begin{eqnarray*}
\Theta(\ff, \gg) \setminus ( \{0\} \cup \{{\pi}/{2} \} ) &=& 
\hat{\Theta}(\ff, \gg) \setminus ( \{0\} \cup \{{\pi}/{2} \} ) \\
&=& \{ {\pi}/{2} - \hat{\Theta}(\ff, \go) \}
\setminus ( \{0\} \cup \{{\pi}/{2} \} ) \\
&=&
\{ {\pi}/{2} - \Theta(\ff, \go) \} \setminus ( \{0\} \cup \{{\pi}/{2} \} ),
\end{eqnarray*}
Using Theorem \ref{7_relations}(7) twice: first for $\ff$ and $\gg$,
next for $\gg$ and $\ff$, and then intersecting them gives (2).
Interchanging $\gg$ and $\go$ in (2) leads to (3).
The statements on multiplicities easily follow from Theorem \ref{7_relations} as
the entries in Table~\ref{t2} are just the minima between pairs of the
corresponding entries in Table \ref{t1}.
\end{proof}
\begin{remark}
Theorem \ref{7_relations}(1) allows us to introduce an equivalent sine-based definition:
\[
\hat{\Theta}(\ff, \gg)=\{\theta: \enspace \theta=\arcsin(\mu), \enspace \mu \geq 0,
\enspace \mum \in \Sigma((\pf \pgo)|_{\ff}) \} \subseteq [0,{\pi}/{2}].
\]
\end{remark}
\begin{remark}
Theorem \ref{7_relations}(2) implies
$
\Theta(\ff, \gg) \setminus \{ {\pi}/{2} \}= \hat{\Theta}(\ff, \gg)
\setminus \{ {\pi}/{2} \} =\hat{\Theta}(\gg, \ff) \setminus \{ {\pi}/{2} \}.
$
\end{remark}
\begin{remark}
We have 
$
\overline{\Theta(\ff, \gg) \setminus ( \{0\} \cup \{{\pi}/{2} \})}= \Theta(P_{\mm}\ff,P_{\mm}\gg),
$
in other words,
the projections $P_{\mm}\ff=\mf$ and $P_{\mm}\gg=\mg$
of the initial subspaces $\ff$ and $\gg$ onto their ``fifth part'' $\mm$ are in
generic position within $\mm$, see \cite{halmos69}, so
the zero and right angles can not belong to the set of
point angles $\Theta_p(P_{\mm}\ff,P_{\mm}\gg)$, but apart from
$0$ and $\pi/2$ the angles $\Theta(\ff,\gg)$ and $\Theta(P_{\mm}\ff,P_{\mm}\gg)$
are the same.
\end{remark}
\begin{remark}
Tables \ref{t1} and \ref{t2} give the absolute values of
the multiplicities of $0$ and $\pi/2$. If we need relative
multiplicities, e.g.,\ how many ``extra''  $0$ and
$\pi/2$ values are in $\Theta(\fo,\go)$ compared to
$\Theta(\ff,\gg)$, we can easily find the answers from Tables
\ref{t1} and \ref{t2} by subtraction, assuming that we subtract
finite numbers, and use identities such as $\dim \moo - \dim \mii
= \dim \ff - \dim \go$ and $\dim \moi - \dim \mio = \dim \ff -
\dim \gg$. Indeed, for the particular question asked above, we
observe that the multiplicity of $\pi/2$ is the same in
$\Theta(\fo,\go)$ and in $\Theta(\ff,\gg)$, but the difference in
the multiplicities of $0$ in $\Theta(\fo,\go)$ compared to in
$\Theta(\ff,\gg)$ is equal to $\dim \mii - \dim \moo = \dim \go -
\dim \ff$, provided that the terms that participate in the
subtractions are finite. Some comparisons require both the
dimension and the codimension of a subspace to be finite, thus,
effectively requiring $\dim \hh < \infty.$
\end{remark}
\subsection{Known Quantities as Functions of Angles}\label{known_quant}
The gap bounds the perturbation
of a closed linear operator by measuring the change
in its graph, while the minimum gap
between two subspaces determines if the sum of the subspaces is closed.
We connect the gap and the minimum gap
to the largest and to the nontrivial smallest principal angles.
E.g., for subspaces $\ff$ and $\gg$ in generic position,
i.e.,\ if $\mm=\hh$, we show that the gap and the minimum gap are the
supremum and the infimum, correspondingly, of the sine of the set of
angles between $\ff$ and $\gg$.

The gap (aperture) between subspaces $\ff$ and $\gg$ defined as,
e.g.,\ \cite{kato},
\[\gap(\ff,\gg)=\left\| \pf-\pg \right\| = 
\max \left\{\left\|\pf \pgo\right\|, \left\|\pg \pfo \right\|\right\}\]
is used to measure the distance between subspaces.
We now describe the gap in terms of the angles.
\begin{theorem}\label{gap_angles}
$
\min \left\{\min\left\{\cos^2(\hat{\Theta}(\ff, \gg))\right\} , 
\min \left\{\cos^2(\hat{\Theta}(\gg, \ff))\right\}\right \}=
1-\gap^2(\ff,\gg).
$
\end{theorem}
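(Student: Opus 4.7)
My plan is to unpack the definition of $\gap(\ff,\gg)$ and translate each of the two operator norms $\|\pf\pgo\|$ and $\|\pg\pfo\|$ into a quantity involving the spectrum of $(\fg)|_\ff$ or $(\gf)|_\gg$, which by Definition \ref{angles_from_cos} encodes the $\cos^2$ of the angles from-to. The key identities are $\gap(\ff,\gg) = \max\{\|\pf\pgo\|,\|\pg\pfo\|\}$ together with the $C^*$-identity $\|T\|^2=\|TT^*\|$.

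Starting from $\|\pf\pgo\|^2 = \|\pf\pgo\pf\|$, I would simplify using $\pgo = I-\pg$ to get $\pf\pgo\pf = \pf - \pf\pg\pf$. Since both $\pf$ and $\pf\pg\pf$ vanish on $\fo$, this operator acts as $I_\ff - (\fg)|_\ff$ on its invariant subspace $\ff$ and is zero on $\fo$, so its norm on $\hh$ equals the norm of $I_\ff - (\fg)|_\ff$ on $\ff$. The restriction $(\fg)|_\ff$ is selfadjoint on $\ff$ (being the compression of the selfadjoint $\fgf$ to its invariant subspace), nonnegative, and a contraction, hence $\Sigma((\fg)|_\ff)\subseteq[0,1]$. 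By the spectral mapping theorem applied to $\lambda\mapsto 1-\lambda$,
\[
\|\pf\pgo\|^2 = \|I_\ff - (\fg)|_\ff\| = \max_{\lambda\in\Sigma((\fg)|_\ff)}(1-\lambda) = 1 - \min\Sigma((\fg)|_\ff).
\]
By Definition \ref{angles_from_cos}, $\min\Sigma((\fg)|_\ff) = \min\{\cos^2(\hat\Theta(\ff,\gg))\}$, so $\|\pf\pgo\|^2 = 1 - \min\{\cos^2(\hat\Theta(\ff,\gg))\}$.

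The symmetric argument with $\ff$ and $\gg$ swapped yields $\|\pg\pfo\|^2 = 1 - \min\{\cos^2(\hat\Theta(\gg,\ff))\}$. Combining,
\[
\gap^2(\ff,\gg) = \max\{\|\pf\pgo\|^2,\|\pg\pfo\|^2\} = 1 - \min\left\{\min\{\cos^2(\hat\Theta(\ff,\gg))\},\,\min\{\cos^2(\hat\Theta(\gg,\ff))\}\right\},
\]
which rearranges to the claim. The only delicate step is the identification $\|\pf\pgo\pf\|_{\bb(\hh)} = \|I_\ff - (\fg)|_\ff\|_{\bb(\ff)}$, where one must be careful to note that $\pf\pgo\pf$ is identically zero on $\fo$ so that the global norm is attained on $\ff$; once this is observed the rest is routine spectral bookkeeping, and I do not anticipate any substantial obstacle.
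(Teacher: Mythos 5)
Your proof is correct, and it takes a genuinely (if mildly) different route from the paper's. The paper also splits $\gap(\ff,\gg)=\max\{\|\pf\pgo\|,\|\pg\pfo\|\}$ and computes each norm, but it evaluates $\|\pf\pgo\|^2$ via the quadratic form $(\pgo\pf\pgo u,u)$, i.e., via $T^*T$, landing in $\go$ and obtaining $\max\{\cos^2(\hat{\Theta}(\go,\ff))\}$; it must then invoke the complement relations of Theorem \ref{7_relations} to convert this back to an expression in $\hat{\Theta}(\gg,\ff)$. You instead use $\|T\|^2=\|TT^*\|$, which keeps everything inside $\ff$ and yields $\|\pf\pgo\|^2=\|I_\ff-(\fg)|_\ff\|=1-\min\{\cos^2(\hat{\Theta}(\ff,\gg))\}$ by routine spectral mapping, with no appeal to the angle relations for orthogonal complements. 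Two remarks. First, note that your pairing of norms with angle sets is the transpose of the paper's: you attach $\|\pf\pgo\|$ to $\hat{\Theta}(\ff,\gg)$ and $\|\pg\pfo\|$ to $\hat{\Theta}(\gg,\ff)$, while the paper's displayed chain attaches them the other way around. Your pairing is the correct one: e.g., for $\ff=\operatorname{span}\{e_1,e_2\}$ and $\gg=\operatorname{span}\{e_1\}$ in $\mathbb{R}^3$ one has $\|\pf\pgo\|=1$ while $\min\{\cos^2(\hat{\Theta}(\gg,\ff))\}=1$, so the paper's intermediate identity fails there (the sets $\hat{\Theta}(\ff,\gg)$ and $\hat{\Theta}(\gg,\ff)$ can differ at $\pi/2$ when exactly one of $\moi$, $\mio$ is trivial); the final statement survives only because the outer $\min$ is taken over both terms. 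Your version avoids this endpoint subtlety altogether. Second, your "delicate step" is indeed harmless: $\pf\pgo\pf$ is block diagonal with respect to $\hh=\ff\oplus\fo$ with zero block on $\fo$, so its norm on $\hh$ equals the norm of its restriction to $\ff$. Altogether your argument is a clean, self-contained alternative that buys independence from Theorem \ref{7_relations}.
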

\begin{proof}
Let us consider both norms in the definition of the gap separately.
Using Theorem \ref{7_relations}, we have
\begin{eqnarray*}
\|\pf \pgo\|^2&=&\sup_{\substack{u \in \hh  \\ \|u\|=1}} \| \pf \pgo u \|^2 =
\sup_{\substack{u \in \hh  \\ \|u\|=1}} (\pf \pgo u, \pf \pgo u)
\nonumber \\
&=&\sup_{\substack{u \in \hh  \\ \|u\|=1}} (\pgo \pf \pgo u, u)=
\|(\pgo\pf)|_{\go} \| = \max \{ \cos^2(\hat{\Theta}(\go, \ff))\}
\nonumber \\
&=&\max \{ \sin^2(\hat{\Theta}(\gg, \ff))\}=1-\min\{ \cos^2(\hat{\Theta}(\gg, \ff))\}.
\end{eqnarray*}
Similarly,
$
\|\pg \pfo\|^2= \max \{ \cos^2(\hat{\Theta}(\fo, \gg))\}=1-\min\{ \cos^2(\hat{\Theta}(\ff, \gg))\}.
$
\end{proof}

It follows directly from the above proof and the previous section that
\begin{cor}\label{cor:gap}
If $\gap(\ff,\gg)<1$ or if the subspaces are in generic position
then both terms under the minimum are the same and so
$\gap(\ff,\gg) = \max\{ \sin({\Theta}(\ff, \gg))\}.$
\end{cor}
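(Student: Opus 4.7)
The plan is to exploit the two intermediate identities obtained inside the proof of Theorem~\ref{gap_angles}, namely
\[
\|\pf\pgo\|^{2}=1-\min\cos^{2}\hat\Theta(\gg,\ff),\qquad
\|\pg\pfo\|^{2}=1-\min\cos^{2}\hat\Theta(\ff,\gg),
\]
so that $\gap^{2}(\ff,\gg)=\max\{\|\pf\pgo\|^{2},\|\pg\pfo\|^{2}\}$. What has to be shown in either hypothesis is that these two norms coincide, for then a single $\min\cos^{2}$ equals $1-\gap^{2}$, and the desired formula $\gap(\ff,\gg)=\max\sin\Theta(\ff,\gg)$ drops out after converting $\min\cos^{2}$ to $\max\sin^{2}$ and using that, under either hypothesis, $\hat\Theta(\ff,\gg)=\hat\Theta(\gg,\ff)=\Theta(\ff,\gg)$.

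First I would handle the case $\gap(\ff,\gg)<1$. Then both $\|\pf\pgo\|<1$ and $\|\pg\pfo\|<1$, which by the above identities is equivalent to $\min\cos^{2}\hat\Theta(\gg,\ff)>0$ and $\min\cos^{2}\hat\Theta(\ff,\gg)>0$. Because the spectrum of a bounded selfadjoint operator is closed, these strict positivities mean that $0$ does not belong to $\Sigma((\gf)|_{\gg})$ or to $\Sigma((\fg)|_{\ff})$, i.e.\ $\pi/2\notin\hat\Theta(\gg,\ff)\cup\hat\Theta(\ff,\gg)$. Theorem~\ref{7_relations}(2) then yields $\hat\Theta(\ff,\gg)=\hat\Theta(\gg,\ff)$, so the two minima coincide and so do the two terms inside the outer minimum of Theorem~\ref{gap_angles}.

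Next I would dispatch the generic-position case. Here $\moo=\moi=\mio=\mii=\{0\}$, so in the preliminary decompositions $\ff=\mf$ and $\gg=\mg$, and the identities
\[
(\fg)|_{\ff}=(\fg)|_{\mf},\qquad (\gf)|_{\gg}=(\gf)|_{\mg}
\]
hold. Corollary~\ref{equiv_restric} asserts that the right-hand sides are unitarily equivalent; hence $\Sigma((\fg)|_{\ff})=\Sigma((\gf)|_{\gg})$ and therefore $\hat\Theta(\ff,\gg)=\hat\Theta(\gg,\ff)$ as sets, again forcing the two minima to agree.

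Once the equality of the two terms is established, one finishes by reading off
\[
1-\gap^{2}(\ff,\gg)=\min\cos^{2}\hat\Theta(\ff,\gg)=1-\max\sin^{2}\hat\Theta(\ff,\gg),
\]
and, since $\pi/2$ is either absent from (case one) or shared by (case two) both $\hat\Theta(\ff,\gg)$ and $\hat\Theta(\gg,\ff)$, replacing $\hat\Theta$ by $\Theta$ is legitimate. The only subtle step is the very first one: ruling out $\pi/2$ when only $\gap<1$ is assumed. That is precisely where one must remember that the spectrum is closed, so that an infimum of zero would already place $0$ in the spectrum and $\pi/2$ among the angles; everything else is bookkeeping on top of Theorem~\ref{gap_angles}, Theorem~\ref{7_relations}, and Corollary~\ref{equiv_restric}.
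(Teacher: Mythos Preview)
Your proof is correct and follows essentially the route the paper indicates: you unpack the two identities from the proof of Theorem~\ref{gap_angles}, then use Theorem~\ref{7_relations}(2) in the $\gap<1$ case and Corollary~\ref{equiv_restric} in the generic-position case to force $\hat\Theta(\ff,\gg)=\hat\Theta(\gg,\ff)$, whence $\Theta=\hat\Theta$ and the claim follows. The paper's own ``proof'' is just the sentence that this follows from the preceding material, so you have merely supplied the details that were left implicit.
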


Let
$
c(\ff, \gg)=\sup\{|(f,g)|: f\in\ff\ominus(\ff\cap\gg),\|f\| \leq 1,
g\in\gg\ominus(\ff\cap\gg),\|g\|\leq 1\},
$
as in \cite{deutsch}, which is a definition of the cosine of the \emph{angle of Friedrichs}.  
\begin{theorem}\label{angles_deutsch}
In terms of the angles,
$
c(\ff, \gg)=\cos \left( \inf \left\{\Theta(\ff, \gg) \setminus \{0\}\right\}\right).
$
\end{theorem}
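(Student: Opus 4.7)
The plan is to reduce both sides of the identity to the quantity $\sup\Sigma((\fg)|_\mf)$.

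First, a standard duality argument gives $c(\ff,\gg)=\|P_{\hat{\ff}}P_{\hat{\gg}}\|$, where $\hat{\ff}=\ff\ominus\moo$ and $\hat{\gg}=\gg\ominus\moo$. Since $\moo\subseteq\ff\cap\gg$, the relations $P_\ff P_\moo=P_\moo P_\gg=P_\moo$ give $P_{\hat{\ff}}P_{\hat{\gg}}=\fg-P_\moo$. Using the five-part decomposition of $\hh$ from \S\ref{prelims}, a routine case check shows that $\fg-P_\moo$ annihilates each of $\moo,\moi,\mio,\mii$ and coincides with $(\fg)|_\mm=P_\mf P_\mg$ on $\mm$. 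Taking norms via $\|P_\mf P_\mg\|^2=\|(P_\mg P_\mf P_\mg)|_\mg\|$ and invoking Corollary~\ref{equiv_restric}, I conclude
\[
c(\ff,\gg)^2=\sup\Sigma((\fg)|_\mf).
\]

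Next I would unpack the right-hand side. The projector decompositions from \S\ref{prelims} give $(\fg)|_\ff=I_\moo\oplus 0_\moi\oplus(\fg)|_\mf$ and similarly $(\gf)|_\gg=I_\moo\oplus 0_\mio\oplus(\gf)|_\mg$, and by Corollary~\ref{equiv_restric} the spectra of $(\fg)|_\mf$ and $(\gf)|_\mg$ agree. Combining Definition~\ref{angles_from_cos} with the equality of nonzero angles in Theorem~\ref{7_relations}(2), I would write
\[
\Theta(\ff,\gg)\setminus\{0\}=\{\arccos\sqrt{\lambda}:\lambda\in\Sigma((\fg)|_\mf),\,\lambda<1\}\cup S,\quad S\subseteq\{\pi/2\}.
\]
Since $\cos^2(\pi/2)=0$, the contribution of $S$ is irrelevant, and taking the supremum of $\cos^2$ over this set yields
\[
\cos^2\!\bigl(\inf\{\Theta(\ff,\gg)\setminus\{0\}\}\bigr)=\sup\{\lambda\in\Sigma((\fg)|_\mf):\,\lambda<1\}.
\]

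The remaining step, and the main obstacle, is to show that this last supremum coincides with $\sup\Sigma((\fg)|_\mf)$; equivalently, that $1$ cannot be an isolated point of $\Sigma((\fg)|_\mf)$. Here I would invoke the fact, noted in \S\ref{prelims}, that $\mf$ and $\mg$ are in generic position in $\mm$, so $\mf\cap\mg=\{0\}$. The operator $(\fg)|_\mf=(\fgf)|_\mf$ is selfadjoint, and any unit eigenvector $v\in\mf$ at $\lambda=1$ would satisfy $\|\pg v\|=\|v\|$, hence $v\in\gg$, and therefore $v\in\mf\cap\gg\subseteq\ff\cap\gg=\moo$, contradicting $v\in\mf\perp\moo$. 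Since isolated points of the spectrum of a selfadjoint operator are eigenvalues, $1$ is not isolated, and deleting it leaves the supremum unchanged. Combining the two displayed equalities and taking square roots finishes the proof.
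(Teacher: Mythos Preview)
Your proof is correct and follows essentially the same route as the paper's: both reduce $c(\ff,\gg)$ to the norm of $\fg$ with the contribution of $\moo$ removed (the paper writes this as $\|(\fg)|_{\hh\ominus\moo}\|$, you go one step further to $\sup\Sigma((\fg)|_{\mf})$), using the five-part decomposition. Your treatment of the right-hand side is in fact more explicit than the paper's---the paper simply notes that subtracting $\moo$ removes $1$ from the \emph{point} spectrum and hence $0$ from the point angles, whereas you spell out the additional argument (generic position of $\mf,\mg$ in $\mm$, isolated spectral points of selfadjoint operators are eigenvalues) needed to conclude that deleting $1$ from $\Sigma((\fg)|_{\mf})$ does not lower the supremum.
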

\begin{proof}
Replacing the vectors $f=\pf u$ and $g=\pg v$
in the definition of $c(\ff, \gg)$ with the vectors $u$ and $v$
and using the standard equality of induced norms
of an operator and the corresponding bilinear form, 
we get
\begin{equation*}
c(\ff, \gg)= \sup_{\substack{u \in \hh \ominus \moo \\ \|u\|=1}}
\sup_{\substack{v \in \hh \ominus \moo \\ \|v\|=1}} |( u,\fg v)| =
\| (\fg)|_{\hh \ominus \moo}\|.
\end{equation*}
Using the five-parts decomposition,
$\fg=I_{\moo}\oplus 0_{\moi}\oplus 0_{\mio}\oplus 0_{\mii}\oplus (\fg)|_{\mm},$
thus ``subtracting'' the subspace $\moo$ from the domain of $\fg$
excludes $1$ from the point spectrum of $\fg$, and, thus,
$0$ from the set of point angles from $\ff$ to $\gg$ and,
by Theorem \ref{7_relations}(2),
from the set of point angles between $\ff$ and $\gg$.
\end{proof}

Let the \emph{minimum gap}, see \cite[\S~IV.4]{kato}, be defined as 
\begin{eqnarray*}
\gamma(\ff, \gg)=\inf_{\substack{f \in \ff, \, f \notin \gg}}
\frac{\dist(f,\, \gg)}{\dist(f,\, \ff \cap \gg)}.
\end{eqnarray*}
\begin{theorem}\label{angles_kato}
In terms of the angles,
$
\gamma(\ff, \gg)= \sin \left(\inf \left\{\Theta(\ff, \gg) \setminus \{0\}
\right\} \right).
$
\end{theorem}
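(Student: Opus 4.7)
The strategy is to prove the Pythagorean identity $\gamma(\ff,\gg)^2 + c(\ff,\gg)^2 = 1$, where $c(\ff,\gg)$ is the cosine of the Friedrichs angle characterized in Theorem \ref{angles_deutsch}. Combined with that theorem and $\sin^2 + \cos^2 = 1$, this yields the claim immediately since $\gamma \ge 0$ and the arccosine takes values in $[0,\pi/2]$.

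To simplify $\gamma$, I would first reduce the defining variational problem to a single operator-norm computation. Writing each $f \in \ff$ as $f = P_{\moo} f + f_\perp$ with $f_\perp = f - P_{\moo} f \in \ff \ominus \moo$, and using $\moo = \ff \cap \gg \subseteq \gg$, one obtains $\dist(f,\gg) = \|\pgo f_\perp\|$ and $\dist(f, \ff \cap \gg) = \|f_\perp\|$; moreover $f \notin \gg$ is equivalent to $f_\perp \neq 0$. By homogeneity the infimum can be taken over unit vectors in $\ff \ominus \moo$. For such $f$, the identity $\|\pgo f\|^2 = 1 - (\pg f, f) = 1 - (\fgf f, f)$ (using $\pf f = f$) together with self-adjointness and nonnegativity of $\fgf$ gives
\[
\gamma(\ff,\gg)^2 = 1 - \sup_{f \in \ff \ominus \moo,\,\|f\|=1}(\fgf f, f) = 1 - \|(\fgf)|_{\ff \ominus \moo}\|.
\]
The five-part decomposition $\ff \ominus \moo = \moi \oplus \mf$, together with $(\fgf)|_{\moi} = 0$ and $(\fgf)|_{\mf} = (\fg)|_{\mf}$, then reduces this to $\gamma^2 = 1 - \|(\fg)|_{\mf}\|$.

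It remains to identify $c(\ff,\gg)^2$ with the same quantity $\|(\fg)|_{\mf}\|$. From the proof of Theorem \ref{angles_deutsch}, $c(\ff,\gg) = \|(\fg)|_{\hh \ominus \moo}\|$; the five-part decomposition of $\fg$, which vanishes on $\moi, \mio, \mii, \mgo$ and equals $\pf$ on $\mg$, reduces this to $c(\ff,\gg) = \|\pf|_{\mg}\|$. The operator $T = \pf|_{\mg} : \mg \to \mf$ satisfies $TT^* = (\fg)|_{\mf}$, so the identity $\|T\|^2 = \|TT^*\|$ yields $c(\ff,\gg)^2 = \|(\fg)|_{\mf}\|$ (Corollary \ref{equiv_restric} provides an alternate route via unitary equivalence). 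Combining, $\gamma^2 + c^2 = 1$, and the theorem follows.

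The main technical obstacle is the bookkeeping with the five-part decomposition needed to match both $\gamma^2$ and $c^2$ to the common operator norm $\|(\fg)|_{\mf}\|$; everything else is straightforward manipulation of orthogonal projectors, and degenerate cases like $\ff \subseteq \gg$ are handled under the standard conventions for $\inf \emptyset$.
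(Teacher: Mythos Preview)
Your proof is correct and follows essentially the same approach as the paper: both reduce the minimum-gap infimum to unit vectors in $\ff\ominus\moo$, use Pythagoras to obtain $\gamma^2=1-\sup_{f\in\ff\ominus\moo,\|f\|=1}\|\pg f\|^2$, identify the supremum with $c(\ff,\gg)^2$, and then invoke Theorem~\ref{angles_deutsch}. The only difference is that the paper makes the identification in one line via $\|\pg f\|=\sup_{g\in\gg,\|g\|=1}|(f,g)|$, whereas you take a detour through the five-part decomposition to match both $\gamma^2$ and $c^2$ to the common operator norm $\|(\fg)|_{\mf}\|$; this is sound but unnecessary, since the bilinear-form argument already delivers $\gamma^2+c^2=1$ directly.
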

\begin{proof}
We have $f \in \ff$ and $f \notin \gg$, so we can represent $f$ in the form $f=f_1+f_2$,
where $f_1 \in \ff\ominus(\ff \cap \gg)$, $f_1 \neq 0$ and $f_2 \in \ff \cap \gg$.
Then
\begin{eqnarray*}
\gamma(\ff, \gg)
&=&\inf_{\substack{f \in \ff, \, f \notin \gg}}
\frac{\dist(f,\, \gg)}{\dist(f,\, \ff \cap \gg)}\\
&=&\inf_{\substack{f_1 \in \ff\ominus(\ff \cap \gg), \, f_2 \in \ff \cap \gg}}
\frac{\|f_1+f_2-P_\gg f_1 - P_\gg f_2\|}{\|f_1+f_2-P_{\ff \cap \gg}f_1 - P_{\ff \cap \gg}f_2 \|}\\
&=& \inf_{\substack{f_1 \in \ff\ominus(\ff \cap \gg)}}
\frac{\|f_1-P_\gg f_1\|}{\|f_1-P_{\ff \cap \gg}f_1 \|}\\
&=& \inf_{\substack{f \in \ff\ominus(\ff \cap \gg)}}
\frac{\|f-P_\gg f\|}{\|f-P_{\ff \cap \gg}f \|}.
\end{eqnarray*}
But $f \in (\ff \cap \gg)^\perp$ and $\|f-P_{\ff \cap \gg} f\|=\|f\|$.
Since $\|\kappa f-P_\gg (\kappa f)\|=|\kappa|\|f-P_\gg f\|$, using the Pythagorean theorem we have
\begin{eqnarray*}
\gamma^2(\ff, \gg)
&=& \inf_{\substack{f \in \ff\ominus(\ff \cap \gg),}}
\frac{\|f-P_\gg f\|^2}{\|f\|^2}\\
&=& \inf_{\substack{f \in \ff\ominus(\ff \cap \gg), \, \|f\|=1}} \|f-P_\gg f\|^2\\
&=& \inf_{\substack{f \in \ff\ominus(\ff \cap \gg), \, \|f\|=1}} {1-\|P_\gg f\|^2}.
\end{eqnarray*}
Using the equality $\|P_\gg f\|=\sup_{\substack{g \in \gg, \, \|g\|=1}} |(f,g)|$ we get
\begin{eqnarray*}
\gamma^2(\ff, \gg)&=&{1-\sup_{\substack{f \in \ff\ominus(\ff \cap \gg),
\, g \in \gg, \, \|f\|=\|g\|=1}} |(f,g)|^2}\\
&=& {1-(c(\ff, \gg))^2}
\end{eqnarray*}
and finally we use Theorem \ref{angles_deutsch}.
\end{proof}

Let us note that removing $0$ from the set of angles in Theorems
\ref{angles_deutsch} and \ref{angles_kato} changes the result after
taking the $\inf$, only if $0$ is present as an isolated
value in the set of angles, e.g.,\ it has no effect
for a pair of subspaces in generic position.

\subsection{The Spectra of Sum and Difference of Orthogonal Projectors}\label{sum_difference}
Sums and differences of a pair of orthogonal projectors often appear in applications.
Here, we describe their spectra in terms of the angles between the ranges of the projectors,
which provides a geometrically intuitive and uniform framework to analyze
the sums and differences of orthogonal projectors.
First, we connect the spectra of the product and of the difference of two orthogonal projectors.
\begin{lemma}\label{pq_p-q} (\cite[Theorem 1]{omlad}, \cite[Lemma 2.4]{kolRak}).
For proper subspaces $\ff$ and $\gg$ we have
$\Sigma(\fg)=\Si(\fgf)\subseteq[0,1]$ and
\[
\Sigma(\pg - \pf) \setminus (\{ -1 \} \cup \{ 0 \} \cup \{ 1 \}) =
\{ \pm (1-\sis)^{1/2}: \enspace \sis \in \Sigma(\fg) \setminus (\{0\} \cup \{ 1 \}) \}.
\]
\end{lemma}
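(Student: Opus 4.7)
I would split the lemma into two parts. The first identity $\Sigma(\fg) = \Sigma(\fgf) \subseteq [0,1]$ is purely operator-theoretic, while the second identity is reduced to the ``generic position'' summand $\mm$ of the five-part decomposition and handled there using Halmos's canonical form for two projectors.

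For the first identity, observe that $\fgf = (\pf\pg)(\pg\pf) = (\fg)(\fg)^*$, since $(\fg)^* = \gf$ and $\pg^2 = \pg$; hence $\fgf$ is a nonnegative selfadjoint operator with $\|\fgf\| \leq 1$, giving $\Sigma(\fgf) \subseteq [0,1]$. The standard identity $\Sigma(AB) \setminus \{0\} = \Sigma(BA) \setminus \{0\}$ applied to $\fgf = (\fg)\pf$ yields $\Sigma(\fgf) \setminus \{0\} = \Sigma(\pf \cdot \fg) \setminus \{0\} = \Sigma(\fg) \setminus \{0\}$, using $\pf^2 = \pf$. The point $0$ either lies in both spectra or in neither (a short case analysis, since $\fg$ and $\fgf$ are invertible only in the trivial case $\ff = \gg = \hh$), completing the first identity and confirming $\Sigma(\fg) \subseteq [0,1]$.

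For the second identity, a direct one-line computation gives $(\pg - \pf)^2 \pf = \pf - \fgf = \pf(\pg - \pf)^2$ and, by symmetry, $(\pg - \pf)^2 \pg = \pg - \gfg = \pg(\pg - \pf)^2$; hence $(\pg - \pf)^2$ commutes with both $\pf$ and $\pg$ and therefore respects the decomposition $\hh = \moo \oplus \moi \oplus \mio \oplus \mii \oplus \mm$. On the first four summands $\pg - \pf$ acts as $0, -I, I, 0$ and $\fg$ acts as $I, 0, 0, 0$, so these pieces contribute only $\{-1,0,1\}$ to $\Sigma(\pg - \pf)$ and only $\{0, 1\}$ to $\Sigma(\fg)$, and thus drop out of the identity to be proved.

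The remaining work is on $\mm$, where $\pf|_\mm$ and $\pg|_\mm$ are in generic position. I would then invoke Halmos's canonical form: there exists a Hilbert space $K$ and a unitary $U: \mm \to K \oplus K$ with $U\pf|_\mm U^* = \begin{pmatrix} I & 0 \\ 0 & 0 \end{pmatrix}$ and $U\pg|_\mm U^* = \begin{pmatrix} C^2 & CS \\ CS & S^2 \end{pmatrix}$ for commuting selfadjoint contractions $C, S \geq 0$ with $C^2 + S^2 = I$. A block computation shows that $U\fg|_\mm U^*$ is block upper-triangular with diagonal blocks $C^2$ and $0$, so $\Sigma(\fg|_\mm) \setminus \{0\} = \Sigma(C^2) \setminus \{0\}$, while $\bigl((\pg - \pf)|_\mm\bigr)^2$ is unitarily similar to $S^2 \oplus S^2$. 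Finally, conjugation of $U(\pg - \pf)|_\mm U^*$ by the unitary $\begin{pmatrix} 0 & I \\ -I & 0 \end{pmatrix}$ on $K \oplus K$ sends it to its negative, so its spectrum is symmetric about zero and therefore equals $\{\pm s : s \in \Sigma(S)\}$. Combined with $S^2 = I - C^2$ and the correspondence $\sis = c^2 \leftrightarrow 1 - \sis = s^2$, this produces the claimed bijection. The main obstacle is precisely this last step: spectral mapping applied to $((\pg - \pf)|_\mm)^2$ alone delivers only one of the two signs of $(1 - \sis)^{1/2}$, and it is the anticommutation by the unitary on $K \oplus K$ that supplies the missing $\pm$ symmetry.
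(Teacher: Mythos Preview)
Your proof is correct. Note, however, that the paper does not supply its own proof of this lemma: it is quoted as a known result from Omladi\v{c} \cite{omlad} and Koliha--Rako\v{c}evi\'{c} \cite{kolRak}, so there is no in-paper argument to compare against directly.

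Your route via Halmos's canonical $2\times 2$ block model on the generic-position summand $\mm$ is clean and rigorous; the observation that conjugation by the unitary $\left(\begin{smallmatrix}0&I\\-I&0\end{smallmatrix}\right)$ negates $(\pg-\pf)|_\mm$ is exactly what is needed to recover both signs $\pm(1-\sigma^2)^{1/2}$, which spectral mapping for the square alone would not give. The cited sources proceed somewhat differently, working with direct algebraic identities in the $C^*$-algebra generated by $\pf$ and $\pg$---essentially exploiting the commutation relation $(\pg-\pf)^2\pf=\pf-\fgf=\pf(\pg-\pf)^2$ that you also noted---without invoking the full block model. Both approaches reach the result with comparable effort; yours is more geometric and makes the $\pm$ symmetry particularly transparent, while the algebraic route avoids appealing to Halmos's representation theorem as a black box.
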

Using Lemma \ref{pq_p-q}, we now characterize the spectrum of the
differences of two orthogonal projectors in terms of the angles between the corresponding subspaces.
\begin{theorem}\label{p-q_sin_f_g}
The multiplicity of the eigenvalue $1$ in $\Si(\pg - \pf)$ is equal to $\dim \mio$, the multiplicity of
the eigenvalue $-1$ is equal to $\dim \moi$, and the multiplicity of the eigenvalue $0$ is equal to
$\dim \moo + \dim \mii$, where $\moo$, $\moi$, $\mio$ and $\mii$
are defined in \S~\ref{prelims}.
For the rest of the spectrum, we have the following:
\[\Si(\pf - \pg) \setminus (\{ -1 \} \cup \{ 0 \} \cup \{ 1 \})=
\pm \sin (\Theta(\ff, \gg)) \setminus (\{ -1 \} \cup \{ 0 \} \cup \{ 1 \}).\]
\end{theorem}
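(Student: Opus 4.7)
My strategy is to combine the five-parts decomposition of $\hh$ from \S\ref{prelims} with Lemma \ref{pq_p-q}, and then identify the relevant piece of $\Si(\fg)$ with the cosines-squared of $\Theta(\ff,\gg)$ using Corollary \ref{equiv_restric}.

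First I would use the orthogonal decomposition $\hh = \moo\oplus\moi\oplus\mio\oplus\mii\oplus\mm$, each summand being invariant under both $\pf$ and $\pg$, and hence under $\pg-\pf$. Direct inspection gives $(\pg-\pf)|_\moo = 0$, $(\pg-\pf)|_\moi = -I$, $(\pg-\pf)|_\mio = I$, $(\pg-\pf)|_\mii = 0$, producing eigenvalue $1$ with multiplicity $\dim\mio$, eigenvalue $-1$ with multiplicity $\dim\moi$, and a contribution $\dim\moo + \dim\mii$ to the multiplicity of eigenvalue $0$. To pin down the multiplicities exactly, I still need to rule out additional contributions from $(\pg-\pf)|_\mm$. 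Using that $\mf$ and $\mg$ are in generic position in $\mm$, so $\mf\cap\mg = \mf\cap\mgo = \mfo\cap\mg = \mfo\cap\mgo = \{0\}$, a short computation handles each case: if $(\pg-\pf)x=x$ with $x\in\mm$, then $\pg x = x+\pf x$, and the inequality $\|\pg x\|^2 \leq \|x\|^2$ together with $\|x+\pf x\|^2 = \|x\|^2 + 3\|\pf x\|^2$ forces $\pf x=0$ and $\pg x=x$, so $x\in\mfo\cap\mg = \{0\}$; the $-1$ case is symmetric, and $(\pg-\pf)x=0$ yields $\pg x=\pf x\in\mf\cap\mg=\{0\}$, hence $x\in\mfo\cap\mgo=\{0\}$.

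For the rest of the spectrum I would invoke Lemma \ref{pq_p-q}, which gives
\[\Si(\pg-\pf)\setminus\{-1,0,1\} = \{\pm(1-\si^2)^{1/2}: \si^2 \in \Si(\fg)\setminus\{0,1\}\}.\]
Decomposing $\fgf$ along $\hh = \moo\oplus\moi\oplus\mio\oplus\mii\oplus\mf\oplus\mfo$, noting that $\fgf$ equals $I$ on $\moo$, vanishes on $\moi\oplus\mio\oplus\mii\oplus\mfo$, and reduces to $(\fg)|_\mf$ on $\mf$, combined with $\Si(\fg)=\Si(\fgf)$ from the same lemma, yields $\Si(\fg)\setminus\{0,1\} = \Si((\fg)|_\mf)\setminus\{0,1\}$. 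On the other side, Definition \ref{angles_from_cos} together with the decompositions $(\fg)|_\ff = I_\moo\oplus 0_\moi\oplus(\fg)|_\mf$ and $(\gf)|_\gg = I_\moo\oplus 0_\mio\oplus(\gf)|_\mg$ from \S\ref{prelims} and the unitary equivalence of Corollary \ref{equiv_restric} give $\cos^2(\Theta(\ff,\gg))\setminus\{0,1\}=\Si((\fg)|_\mf)\setminus\{0,1\}$. Substituting and using $\sin\theta=(1-\cos^2\theta)^{1/2}$ then delivers $\Si(\pf-\pg)\setminus\{-1,0,1\}=\pm\sin(\Theta(\ff,\gg))\setminus\{-1,0,1\}$.

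The main technical point will be the $\mm$-part argument that $\pm 1$ and $0$ are not eigenvalues of $(\pg-\pf)|_\mm$, which depends crucially on the generic position of $\mf$ and $\mg$ in $\mm$ and isolates the stated exact multiplicities; the remainder is essentially an algebraic combination of Lemma \ref{pq_p-q}, Corollary \ref{equiv_restric}, and the five-parts decomposition.
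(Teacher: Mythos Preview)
Your proposal is correct and follows essentially the same approach as the paper: the five-parts decomposition for the multiplicities of $\pm 1$ and $0$, and Lemma~\ref{pq_p-q} together with Definition~\ref{angles_from_cos} for the remaining spectrum. The paper's proof is quite terse and omits the verification that $(\pg-\pf)|_\mm$ has no eigenvalues $\pm 1,0$ and the passage from $\hat\Theta$ to $\Theta$ via Corollary~\ref{equiv_restric}; your write-up supplies exactly these details.
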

\begin{proof}
The last statement follows from Lemma \ref{pq_p-q} and Definition
\ref{angles_from_cos}. To obtain the results concerning the multiplicity of eigenvalues $1$,  $-1$
and $0$, it suffices to use the decomposition of these
projectors into five parts, given in \S~\ref{prelims}.
\end{proof}

In some applications, e.g.,\ in domain decomposition methods,
see \S \ref{DDM}, the
distribution of the spectrum of the sum of projectors is important. 
We directly reformulate \cite[Corollary 4.9, p. 86]{bjorMand}, see also \cite[p. 298]{vidav}, 
in terms of the angles between subspaces:
\begin{theorem}\label{sum_of_projs_Jan}
For any nontrivial pair of orthogonal projectors $\pf$ and $\pg$ on $\hh$ the spectrum
of the sum $\pf+\pg$, with the possible exception of the point $0$, lies in the closed interval
of the real line $[1-\| \fg \|,  1+ \| \fg \| ]$, and the following identity holds:
\[
\Si(\pf + \pg) \setminus (\{0\} \cup \{ 1 \})=\{ 1 \pm \cos(\Theta(\ff,\gg)) \}
\setminus (\{0\} \cup \{ 1 \}).
\]
\end{theorem}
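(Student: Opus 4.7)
The plan is to reduce the problem to the generic-position case via the five-parts decomposition $\hh=\moo\oplus\moi\oplus\mio\oplus\mii\oplus\mm$ from \S\ref{prelims}. On the four corner subspaces, $\pf+\pg$ acts as the scalar multiples $2I,\,I,\,I,\,0$, respectively. After excising $\{0,1\}$ as in the statement, only the value $2$ can survive from these components, and it appears precisely when $\moo\neq\{0\}$, that is, when $0\in\Theta(\ff,\gg)$; since $1+\cos 0=2$ and $1-\cos 0=0$ while $1\pm\cos(\pi/2)=1$, the four corner subspaces contribute exactly what the right-hand side of the claimed identity requires. Everything therefore reduces to computing $\Sigma((\pf+\pg)|_\mm)$, where $\mf=\pf\mm$ and $\mg=\pg\mm$ are in generic position within $\mm$.

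The key algebraic observation I would use is the pair of identities
\[
(\pf+\pg-I)^2+(\pf-\pg)^2=I, \qquad (\pf+\pg-I)(\pf-\pg)+(\pf-\pg)(\pf+\pg-I)=0,
\]
both verified immediately by expansion using $\pf^2=\pf$ and $\pg^2=\pg$. The first identity combined with Theorem~\ref{p-q_sin_f_g}, restricted to $\mm$ where the multiplicities of $-1,0,1$ in $\Si(\pf-\pg)$ vanish by the five-parts decomposition, yields via spectral mapping
\[
\Sigma\bigl((\pf+\pg-I)^2|_\mm\bigr)\setminus\{0,1\}=\cos^2\Theta(\ff,\gg)\setminus\{0,1\}.
\]

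To lift this squared spectrum to $(\pf+\pg-I)|_\mm$ itself, I would write $A=(\pf+\pg-I)|_\mm$ and $B=(\pf-\pg)|_\mm$ and exploit the anticommutation $AB=-BA$: extended through the bounded Borel functional calculus in $A$, it gives the intertwining $B\,E_A(\Omega)=E_A(-\Omega)\,B$ for every Borel set $\Omega\subset\mathbb{R}$. The kernel of $(\pf-\pg)^2$ in $\hh$ is $\moo\oplus\mii$ by the five-parts decomposition, so $B$ is injective on $\mm$; the intertwining then forces $\Sigma(A)=-\Sigma(A)$, and combined with the squared-spectrum computation this gives $\Sigma(A)\setminus\{-1,0,1\}=\pm\cos\Theta(\ff,\gg)\setminus\{-1,0,1\}$. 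Shifting by $I$ and assembling the four corner contributions delivers the displayed formula. The interval containment $\Sigma(\pf+\pg)\setminus\{0\}\subset[1-\|\fg\|,\,1+\|\fg\|]$ is then immediate: $\|\fg\|^2=\|\fgf\|=\sup\Sigma((\fg)|_\ff)$ is the square of the largest cosine in $\hat\Theta(\ff,\gg)$, so every value $1\pm\cos\theta$ lies within $\|\fg\|$ of $1$, while the only other possible nonzero value, $2$, occurs only when $\moo\neq\{0\}$ and hence $\|\fg\|=1$. The main obstacle is the spectral-symmetry step: in finite dimensions it is automatic by eigenvector pairing, but in the infinite-dimensional setting one must justify the intertwining through the continuous functional calculus and rely on injectivity of $B$ on $\mm$.
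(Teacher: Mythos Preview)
The paper does not actually prove this theorem: it simply states that the result is a direct reformulation, in the language of angles, of \cite[Corollary~4.9]{bjorMand} (see also \cite{vidav}). Your proposal therefore offers something the paper does not---a self-contained argument. Your route via the five-parts decomposition together with the pair of identities $(\pf+\pg-I)^2+(\pf-\pg)^2=I$ and $(\pf+\pg-I)(\pf-\pg)=-(\pf-\pg)(\pf+\pg-I)$, combined with Theorem~\ref{p-q_sin_f_g}, is a clean and classical way to transfer the known description of $\Si(\pf-\pg)$ to $\Si(\pf+\pg)$; the anticommutation/injectivity argument for the spectral symmetry of $A=(\pf+\pg-I)|_\mm$ is correct and extends properly through the bounded Borel functional calculus as you indicate.

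There is, however, one genuine gap in the bookkeeping at the value $2$. You assert that $\moo\neq\{0\}$ is equivalent to $0\in\Theta(\ff,\gg)$; in fact $\moo\neq\{0\}$ is equivalent only to $0\in\Theta_p(\ff,\gg)$ (Table~\ref{t2}), and $0$ may lie in $\Theta(\ff,\gg)$ as a non-isolated, non-point angle even when $\moo=\{0\}$. Since your spectral identity on $\mm$ is stated only for $\Si(A|_\mm)\setminus\{-1,0,1\}$, the possibility $1\in\Si(A|_\mm)$, i.e.\ $2\in\Si((\pf+\pg)|_\mm)$, is not covered by either the corner analysis or the $\mm$-analysis, and the displayed identity is left unverified at the single value $2$. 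The repair is short: one checks directly that $2\in\Si(\pf+\pg)$ if and only if $0\in\Theta(\ff,\gg)$. For the forward direction, a Weyl sequence $x_n$ with $\|(\pf+\pg-2I)x_n\|\to 0$ forces $\|(I-\pf)x_n\|,\|(I-\pg)x_n\|\to 0$, and then $f_n=\pf x_n/\|\pf x_n\|\in\ff$ satisfies $\|(\fg)|_\ff f_n-f_n\|\to 0$, so $1\in\Si((\fg)|_\ff)$ and $0\in\hat\Theta(\ff,\gg)=\Theta(\ff,\gg)$ at $0$ by Remark~2.9. Conversely, if $0\in\Theta(\ff,\gg)$ then $1\in\Si((\fg)|_\ff)$, and a Weyl sequence $f_n\in\ff$ gives $((\pf+\pg)f_n,f_n)=1+\|\pg f_n\|^2\to 2$, so $2\in\Si(\pf+\pg)$. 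With this patch, your proof is complete, and the same observation also closes the corresponding loose end in your interval argument (if $2\in\Si(\pf+\pg)$ then $0\in\Theta(\ff,\gg)\subseteq\hat\Theta(\ff,\gg)$, hence $\|\fg\|=1$).
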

\section{Principal Vectors, Subspaces and Invariant Subspaces}\label{princ_vectors_1}
In this section,  we basically follow \citet[Section 2.8]{juju_thesis} to 
introduce principal invariant subspaces for a pair of subspaces 
by analogy with invariant subspaces of operators.  
Given the principal invariant subspaces (see Definition \ref{princ_invar_subsp} below) 
of a pair of subspaces $\ff$ and $\gg$, 
we construct the principal invariant subspaces 
for pairs $\ff$ and $\go$, $\fo$ and $\gg$, $\fo$ and $\go$.
We describe relations between orthogonal projectors onto
principal invariant subspaces. We show that, in particular cases,
principal subspaces and principal vectors can be defined
essentially as in the finite dimensional case,
and we investigate their properties. Principal vectors, subspaces and
principal invariant subspaces reveal the fine structure of the
mutual position of a pair of subspaces in a Hilbert space.
Except for Theorem \ref{thm.myinvariant}, all other statements can be found in 
\cite[sections 2.6-2.9]{juju_thesis}, which we refer the reader to for detailed proofs
and more facts.  
\subsection{Principal Invariant Subspaces}\label{princ_vectors_2}
Principal invariant subspaces for a pair of subspaces generalize 
the already known notion of principal vectors, e.g.,\ \cite{w83}. 
We give a geometrically intuitive definition of
principal invariant subspaces and connect them with invariant subspaces of
the product of the orthogonal projectors. 
\begin{definition}\label{princ_invar_subsp}
A pair of subspaces $\uu \subseteq \ff$ and $\vv \subseteq \gg$ is called a pair of
principal invariant subspaces for the subspaces $\ff$ and $\gg$, if
$\pf\vv\subseteq\uu$ and $\pg\uu\subseteq\vv.$
We call the pair $\uu\subseteq\ff$ and $\vv\subseteq\gg$
nondegenerate if $\overline{\pf\vv}=\uu\neq\{0\}$ and $\overline{\pg\uu}=\vv\neq\{0\}$
and strictly nondegenerate if $\pf\vv=\uu\neq\{0\}$ and $\pg\uu=\vv\neq\{0\}.$
\end{definition}
This definition is different from that used in 
\cite[Section 2.8, p. 57]{juju_thesis}, where only what we call here 
strictly nondegenerate principal invariant subspaces are defined. 

The following simple theorem deals with enclosed principal invariant subspaces.
\begin{theorem}\label{angl_sub_sub_1}
Let $\uu \subset \ff$ and $\vv \subset \gg$ be a pair of principal invariant
subspaces for subspaces $\ff$ and $\gg$, and $\underline{\uu} \subset \uu$,
$\underline{\vv} \subset \vv$ be a pair of principal invariant  subspaces for subspaces
$\uu$ and $\vv$. Then $\underline{\uu}$, $\underline{\vv}$ form a pair of principal invariant
subspaces for the subspaces $\ff$, $\gg$, and
$\Theta(\underline{\uu},\underline{\vv}) \subseteq \Theta(\uu,\vv) \subseteq \Theta(\ff,\gg).$
\end{theorem}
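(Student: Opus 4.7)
The plan is to reduce everything to a single identity linking the projectors onto the larger pair and those onto the smaller pair. I claim that for every $v\in\vv$ one has $\pf v=\pu v$, and symmetrically $\pg u=\pv u$ for every $u\in\uu$. To see the first, write $\pu v=\pu\pf v+\pu\pfo v$; the second term vanishes because $\uu\subseteq\ff$ gives $\pu\pfo=0$, while principal invariance of $(\uu,\vv)$ for $(\ff,\gg)$ yields $\pf v\in\uu$, so $\pu\pf v=\pf v$. The companion identity is obtained by swapping roles. These identities are really the engine of the proof.

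From the identities, the first conclusion of the theorem is almost immediate. For any $v\in\underline{\vv}\subseteq\vv$ we have $\pf v=\pu v$, and since $(\underline{\uu},\underline{\vv})$ is principal invariant for $(\uu,\vv)$, $\pu v\in\underline{\uu}$; hence $\pf\underline{\vv}\subseteq\underline{\uu}$. Symmetrically $\pg\underline{\uu}\subseteq\underline{\vv}$, so $(\underline{\uu},\underline{\vv})$ is principal invariant for $(\ff,\gg)$.

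For the angle inclusion, I combine the two identities: for $u\in\uu$, $\pg u=\pv u\in\vv$, and then $\pf(\pv u)=\pu(\pv u)$, i.e.\ $(\fg)|_{\uu}=(\pu\pv)|_{\uu}$. Now $(\fg)|_\ff$ coincides with $\fgf|_\ff$ (because $\pf$ acts as identity on $\ff$) and is therefore selfadjoint on $\ff$, and $\uu$ is a closed invariant subspace for it. Any closed invariant subspace of a selfadjoint operator is reducing, so $(\fg)|_\ff=(\fg)|_\uu\oplus(\fg)|_{\ff\ominus\uu}$ and consequently
\[
\Sigma\bigl((\pu\pv)|_{\uu}\bigr)=\Sigma\bigl((\fg)|_{\uu}\bigr)\subseteq\Sigma\bigl((\fg)|_{\ff}\bigr).
\]
By Definition \ref{angles_from_cos} this gives $\hat{\Theta}(\uu,\vv)\subseteq\hat{\Theta}(\ff,\gg)$. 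The mirror argument with $\ff,\uu$ and $\gg,\vv$ interchanged yields $\hat{\Theta}(\vv,\uu)\subseteq\hat{\Theta}(\gg,\ff)$, and intersecting the two inclusions gives $\Theta(\uu,\vv)\subseteq\Theta(\ff,\gg)$. Applying exactly the same reasoning to the pair $(\underline{\uu},\underline{\vv})\subset(\uu,\vv)$ delivers $\Theta(\underline{\uu},\underline{\vv})\subseteq\Theta(\uu,\vv)$, completing the chain.

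The only nontrivial step is the identification $\pf|_{\vv}=\pu|_{\vv}$; once this is available, both the principal invariance claim and the spectral inclusion fall out mechanically. A secondary subtlety is the passage from invariance to reducibility, which is where the selfadjointness of $(\fg)|_\ff=\fgf|_\ff$ on $\ff$ (and not merely on $\hh$) is used.
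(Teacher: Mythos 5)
Your argument is correct. The paper itself gives no proof of Theorem \ref{angl_sub_sub_1} (it is labelled ``simple'' and deferred to \cite[sections 2.6--2.9]{juju_thesis}), so there is nothing in the text to compare against line by line; but every step you take checks out. The key identities $\pf|_{\vv}=\pu|_{\vv}$ and $\pg|_{\uu}=\pv|_{\uu}$ follow exactly as you say from $\pu\pfo=0$ together with $\pf\vv\subseteq\uu$ (and the mirror), and they immediately give both the transitivity of principal invariance and the operator identity $(\fg)|_{\uu}=(\pu\pv)|_{\uu}$. The passage from invariance to reduction is legitimate: $(\fg)|_{\ff}=(\fgf)|_{\ff}$ is a bounded selfadjoint operator on the Hilbert space $\ff$, and a closed invariant subspace of a bounded selfadjoint operator is automatically reducing, whence $\Sigma\bigl((\fg)|_{\uu}\bigr)\subseteq\Sigma\bigl((\fg)|_{\ff}\bigr)$ and therefore $\hat{\Theta}(\uu,\vv)\subseteq\hat{\Theta}(\ff,\gg)$; intersecting with the mirrored inclusion yields $\Theta(\uu,\vv)\subseteq\Theta(\ff,\gg)$, and iterating handles $(\underline{\uu},\underline{\vv})$. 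Your route is also consistent with the paper's own machinery: the invariance $\fg\uu\subseteq\uu$ you use is precisely the content of Theorem \ref{thm.myinvariant}, so your proof can be viewed as a self-contained elaboration of the mechanism the paper states there. No gaps.
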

Definition \ref{princ_invar_subsp} resembles the notion of 
invariant subspaces. The next theorem completely clarifies this connection 
for general principal invariant subspaces. 
\begin{theorem}\label{thm.myinvariant}
The subspaces $\uu\subseteq\ff$ and $\vv\subseteq\gg$ form a 
pair of principal invariant subspaces for the subspaces $\ff$ and $\gg$ 
if and only if $\uu\subseteq\ff$ is an invariant subspace
of the operator $(\fg)|_\ff$ 
and $\vv=\overline{\pg\uu}\oplus\vv_0$, where $\vv_0\subseteq\mio=\gg\cap\ff^\perp$.
\end{theorem}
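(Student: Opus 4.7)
My plan is to verify the two implications separately, with the forward direction being the slightly more delicate one.

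For the forward direction, assume $\pf\vv\subseteq\uu$ and $\pg\uu\subseteq\vv$. Invariance of $\uu$ under $(\fg)|_\ff$ is immediate, since for $u\in\uu$ we have $\pg u\in\vv$, hence $\pf\pg u\in\pf\vv\subseteq\uu$. Next, since $\pg\uu\subseteq\vv$ and $\vv$ is closed, $\overline{\pg\uu}\subseteq\vv$, so we can set $\vv_0=\vv\ominus\overline{\pg\uu}$ (so orthogonality in the direct sum is automatic). Clearly $\vv_0\subseteq\vv\subseteq\gg$, so the task is to show $\vv_0\subseteq\fo$, i.e., $\pf v_0=0$ for every $v_0\in\vv_0$.

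Here is the key computation. For any $u\in\uu$ and $v_0\in\vv_0$, since $v_0\perp\pg u$ and $v_0\in\gg$, we get
\[
(v_0,u)=(\pg v_0,u)=(v_0,\pg u)=0,
\]
so $v_0\perp\uu$. On the other hand, $\pf v_0\in\pf\vv\subseteq\uu$, and for every $u\in\uu\subseteq\ff$,
\[
(\pf v_0,u)=(v_0,\pf u)=(v_0,u)=0,
\]
so $\pf v_0\perp\uu$ as well. Combining, $\pf v_0\in\uu\cap\uu^\perp=\{0\}$, hence $v_0\in\fo$, and therefore $\vv_0\subseteq\gg\cap\fo=\mio$.

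For the reverse direction, assume $\uu$ is $(\fg)|_\ff$-invariant and $\vv=\overline{\pg\uu}\oplus\vv_0$ with $\vv_0\subseteq\mio$. The inclusion $\pg\uu\subseteq\vv$ is immediate. For $\pf\vv\subseteq\uu$, write $v=w+v_0$ with $w\in\overline{\pg\uu}$ and $v_0\in\vv_0\subseteq\fo$, so $\pf v_0=0$ and $\pf v=\pf w$. For $w=\pg u$ with $u\in\uu$ the invariance gives $\pf w=\pf\pg u\in\uu$; by the continuity of $\pf$ and closedness of $\uu$, this extends to arbitrary $w\in\overline{\pg\uu}$, yielding $\pf v\in\uu$. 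The main (modest) obstacle is the calculation in the forward direction that forces $\pf v_0=0$; everything else is direct once the roles of the two invariance hypotheses are separated.
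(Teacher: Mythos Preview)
Your proof is correct and follows essentially the same approach as the paper's: both directions hinge on identifying $\vv\ominus\overline{\pg\uu}$ with $\vv\cap\uu^\perp$ (via $(v_0,\pg u)=(v_0,u)$) and then using $\pf v_0\in\uu\cap\uu^\perp$ to force $\pf v_0=0$, while the converse uses continuity of $\pf$ and closedness of $\uu$ to pass from $\pf\pg\uu\subseteq\uu$ to $\pf\,\overline{\pg\uu}\subseteq\uu$. Your write-up is slightly more explicit in the reverse direction, but the argument is the same.
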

\begin{proof}
Conditions $\pf\vv\subseteq\uu$ and $\pg\uu\subseteq\vv$ imply 
$\pf\pg\uu\subseteq\pf\vv\subseteq\uu.$ Let us consider 
$v_0\in\vv\ominus\overline{\pg\uu}=\vv\cap\uu^\perp$ 
(the latter equality follows from $0=(v_0,\pg u)=(v_0,u),\,\forall u\in\uu$).
We have $\pf v_0\in\uu^\perp$ since $\uu\subseteq\ff$, but our assumption
$\pf\vv\subseteq\uu$ assures that $\pf v_0\in\uu$, so $\pf v_0=0$, which means that 
$\vv_0\subseteq\mio,$ as required.

To prove the converse, let $\pf\pg\uu\subseteq\uu$ and 
$\vv=\overline{\pg\uu}\oplus\vv_0$. Then 
$\pf\vv=\pf\overline{\pg\uu}\subseteq\uu$ since $\uu$ is closed. 
$\pg\uu\subseteq\vv$ follows from the formula for $\vv.$
\end{proof}

If the subspace $\mio$ is trivial, the 
principal invariant subspace $\vv$ that corresponds to $\uu$ is clearly unique. 
The corresponding statement for $\uu$, given $\vv$, we get from 
Theorem \ref{thm.myinvariant} by swapping $\ff$ and $\gg.$
We now completely characterize (strictly) nondegenerate principal invariant subspaces
using the corresponding angles.
\begin{theorem}\label{2-inv-2}
The pair $\uu \subseteq \ff$ and $\enspace \vv \subseteq \gg$
of principal invariant subspaces for the subspaces $\ff$ and $\gg$
is nondegenerate if and only if 
both operators $(\fg)|_\uu$ and $(\gf)|_\vv$ are invertible, 
i.e.,\  ${\pi}/{2}\notin\hat{\Theta}_p(\uu,\vv)\cup\hat{\Theta}_p(\vv,\uu),$
and strictly nondegenerate if and only if
each of the inverses is bounded, 
i.e.,\  ${\pi}/{2}\notin\hat{\Theta}(\uu,\vv)\cup\hat{\Theta}(\vv,\uu),$
or equivalently in terms of the gap, $\gap(\uu,\vv)=\|\pu-\pv\|<1$.
\end{theorem}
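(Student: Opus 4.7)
The plan is to recast $(\fg)|_\uu$ and $(\gf)|_\vv$ as operators built from $\pu$ and $\pv$ alone, so that the invariance hypotheses translate cleanly into spectral conditions on the operators whose spectra define $\hat{\Theta}(\uu,\vv)$ and $\hat{\Theta}(\vv,\uu)$. For any $u\in\uu$ I would argue $\pv u=\pg u$: indeed, $u-\pg u\in\gg^\perp\subseteq\vv^\perp$ gives $\pv u=\pv\pg u$, and $\pg\uu\subseteq\vv$ gives $\pv\pg u=\pg u$. Combined with $\pu\pf=\pu$ (since $\uu\subseteq\ff$) and $\pf\pg u\in\pf\vv\subseteq\uu$, this yields $(\pu\pv)|_\uu=(\fg)|_\uu$, and symmetrically $(\pv\pu)|_\vv=(\gf)|_\vv$. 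Moreover $(\pu\pv)|_\uu=(\pu\pv\pu)|_\uu$ is the restriction of a bounded selfadjoint nonnegative operator on $\hh$ to its invariant subspace $\uu$, hence is itself selfadjoint and nonnegative on $\uu$; the same holds for $(\pv\pu)|_\vv$ on $\vv$.

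Set $A=(\pu\pv)|_\uu$ and $B=(\pv\pu)|_\vv$. The range of $A$ satisfies $\rr(A)=\pf\pg\uu\subseteq\pf\vv\subseteq\uu$. For the nondegenerate case, if $\overline{\pf\vv}=\uu$ and $\overline{\pg\uu}=\vv$, the continuity of $\pf$ gives $\pf\vv=\pf(\overline{\pg\uu})\subseteq\overline{\pf\pg\uu}$, so $\uu=\overline{\pf\vv}\subseteq\overline{\pf\pg\uu}\subseteq\uu$, i.e., $\overline{\rr(A)}=\uu$. Selfadjointness of $A$ on $\uu$ then forces $\nn(A)=\rr(A)^\perp=\{0\}$, meaning $0\notin\Sigma_p(A)$, equivalently $\pi/2\notin\hat{\Theta}_p(\uu,\vv)$. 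Conversely, injectivity of $A$ gives $\overline{\rr(A)}=\uu$, and the sandwich $\pf\pg\uu\subseteq\pf\vv\subseteq\uu$ forces $\overline{\pf\vv}=\uu$. The mirror argument for $B$ handles $\overline{\pg\uu}=\vv$; nontriviality reads directly as $\uu\neq\{0\}$ and $\vv\neq\{0\}$.

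For the strictly nondegenerate case I replace each closure by equality. If $\pf\vv=\uu$ and $\pg\uu=\vv$, then $\rr(A)=\pf\pg\uu=\pf\vv=\uu$, so $A$ is surjective; a surjective selfadjoint operator on a Hilbert space is bijective (its kernel is the orthogonal complement of its range), and the open mapping theorem supplies a bounded $A^{-1}$, i.e., $0\notin\Sigma(A)$, equivalently $\pi/2\notin\hat{\Theta}(\uu,\vv)$. The converse is immediate: $A$ boundedly invertible gives $\pf\pg\uu=\uu$, and the sandwich forces $\pf\vv=\uu$; likewise for $B$. The equivalence with $\gap(\uu,\vv)<1$ follows at once from Theorem~\ref{gap_angles} applied to the pair $(\uu,\vv)$: the gap is strictly less than one iff both $\min\cos^2(\hat{\Theta}(\uu,\vv))$ and $\min\cos^2(\hat{\Theta}(\vv,\uu))$ are strictly positive, which is precisely $\pi/2\notin\hat{\Theta}(\uu,\vv)\cup\hat{\Theta}(\vv,\uu)$. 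The main technical point is the identity $(\fg)|_\uu=(\pu\pv)|_\uu$; once it is in hand, the remainder is elementary spectral theory for selfadjoint operators.
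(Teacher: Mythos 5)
Your proof is correct and follows essentially the same route as the paper's: both arguments reduce nondegeneracy to density of the range of a selfadjoint nonnegative operator on $\uu$ (hence trivial kernel) and strict nondegeneracy to surjectivity plus the open mapping theorem. The one place where you are more explicit than the paper is the identity $(\fg)|_\uu=(\pu\pv)|_\uu$, which is exactly what justifies the ``i.e.''\ clauses translating invertibility of $(\fg)|_\uu$ into $\pi/2\notin\hat{\Theta}_p(\uu,\vv)$; the paper leaves that translation implicit, so spelling it out is a genuine improvement in completeness.
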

\begin{proof}
We prove the claim for the operator $(\fg)|_\uu,$ and the claim for the other
operator follows by symmetry. 
Definition \ref{princ_invar_subsp} uses  
$\overline{\pf\vv}=\uu\neq\{0\}$ for nondegenerate  principal invariant subspaces. 
 At the same time, Theorem \ref{thm.myinvariant}
holds, so $\vv=\overline{\pg\uu}\oplus\vv_0$, where $\vv_0\subseteq\mio=\gg\cap\ff^\perp$.
So $\uu=\overline{\pf\vv}=\overline{\fg\uu}.$ Also by Theorem \ref{thm.myinvariant},
$\uu\subseteq\ff$ is an invariant subspace of the operator $(\fg)|_\ff$, so
$\uu=\overline{\fg\uu}=\overline{(\fg)|_\uu\uu}$. 
Since $(\fg)|_\uu$ is Hermitian,
its null-space is trivial (as the orthogonal in $\uu$ complement to its range 
which is dense in $\uu$),
i.e.,\ the operator  $(\fg)|_\uu$ is one-to-one and thus invertible. 
For strictly nondegenerate principal invariant subspaces, 
${(\fg)|_\uu\uu}=\uu$, so the operator $(\fg)|_\uu$
by the open mapping theorem has a continuous and thus bounded inverse.

Conversely, by Theorem \ref{thm.myinvariant} 
$\uu\subseteq\ff$ is an invariant subspace of the operator $(\fg)|_\ff$, 
so the restriction $(\fg)|_\uu$ is correctly defined. 
The operator $(\fg)|_\uu$ is invertible by assumption, thus  
its null-space is trivial, and so its range is dense: 
$\uu=\overline{(\fg)|_\uu\uu}=\overline{\fg\uu}$.
By Theorem \ref{thm.myinvariant}, 
$\vv=\overline{\pg\uu}\oplus\vv_0$, therefore  
$\overline{\pf\vv}=\overline{\fg\uu}=\uu.$
The other equality, $\overline{\pg\uu}=\vv\neq\{0\}$,
of Definition \ref{princ_invar_subsp} for nondegenerate  principal invariant subspaces,
is proved similarly using the assumption that $(\gf)|_\gg$ is invertible. 
If, in addition, each of the inverses is bounded, the corresponding ranges are closed, 
$\uu=\fg\uu$ and $\vv=\fg\vv$ and we obtain 
$\pf\vv=\uu\neq\{0\}$ and $\pg\uu=\vv\neq\{0\}$
as is needed in Definition \ref{princ_invar_subsp} for strictly nondegenerate  principal invariant subspaces.

The equivalent formulations of conditions of the theorem in terms of the angles and the gap  
follow directly from Definitions \ref{angles_from_cos} and \ref{multiplicity_of_angle} 
and Theorem \ref{gap_angles}. 
\end{proof}

Theorem \ref{andrew_ilya} introduces 
the unitary operator $W$ that gives the unitary equivalence of 
$\fgf$ and $\gfg$ and, if $\gap(\ff,\gg)<1$, the unitary equivalence by \eqref{dav_ka_ka} of 
$\pf$ and $\pg$. Now we state that the same $W$ makes orthogonal projectors
$\pu$ and $\pv$  unitarily equivalent for strictly nondegenerate  principal invariant subspaces
$\uu \subset \ff$ and $\vv \subset \gg$, and 
we obtain expressions for the orthogonal projectors.
\begin{theorem}\label{inv-proj}
Let  $\uu\subseteq\ff$ and $\vv\subseteq\gg$
be a pair of  strictly nondegenerate principal invariant subspaces for the subspaces $\ff$ and $\gg$, 
and $W$ be defined as in Theorem \ref{andrew_ilya}.
Then $\vv=W \uu$ and $\uu=W^* \vv,$ 
while the orthoprojectors satisfy 
$\pv=W \pu W^*=\pg\pu((\fg)|_\uu)^{-1}\pu\pg$ 
and $\pu=W^* \pv W=\pf\pv((\gf)|_\vv)^{-1}\pv\pf.$ 
\end{theorem}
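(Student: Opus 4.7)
The plan is to prove in sequence: (i) $W\uu=\vv$ and $W^*\vv=\uu$, (ii) the operator identity $\pv=W\pu W^*$, and (iii) the explicit closed form $\pv=\pg\pu((\fg)|_\uu)^{-1}\pu\pg$. The dual statements for $\pu$ follow by interchanging $\ff\leftrightarrow\gg$, $\uu\leftrightarrow\vv$, and $W\leftrightarrow W^*$.

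For (i), I would restrict the polar decomposition $\gf=W\sqrt{\fgf}$ from the proof of Theorem \ref{andrew_ilya} to $\uu$. Since $\pf u=u$ for $u\in\uu$, we have $\fgf u=\pf\pg u=\fg u=(\fg)|_\uu u$, so $\fgf|_\uu=(\fg)|_\uu$; as $\uu$ reduces the self-adjoint operator $\fgf$, the functional calculus gives $\sqrt{\fgf}|_\uu=\sqrt{(\fg)|_\uu}$. By Theorem \ref{2-inv-2}, strict nondegeneracy makes $(\fg)|_\uu$ boundedly invertible on $\uu$, hence so is its positive square root. Combining with $\gf u=\pg u$ for $u\in\uu$ yields
\[
W|_\uu=\pg|_\uu\bigl(\sqrt{(\fg)|_\uu}\bigr)^{-1},
\]
which maps $\uu$ onto $\pg\uu=\vv$ by strict nondegeneracy. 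Hence $W\uu\subseteq\vv$, and the symmetric argument applied to $\fg=W^*\sqrt{\gfg}$ gives $W^*\vv\subseteq\uu$. Unitarity of $W$ promotes both inclusions to equalities. Claim (ii) is then immediate: $W\pu W^*$ is self-adjoint, idempotent (by $W^*W=I$), and has range $W\uu=\vv$, so it must equal $\pv$.

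The heart of the theorem is (iii). I would set $R:=\pg\pu((\fg)|_\uu)^{-1}\pu\pg$ and verify that $R$ is an orthogonal projector onto $\vv$. Self-adjointness of $R$ follows from that of $(\fg)|_\uu$ on $\uu$, and the range of $R$ lies in $\pg\uu=\vv$. The key simplification for the remaining step is the identity $\pu v=\pf v$ for $v\in\vv$, which holds because $\pf v\in\uu$ by strict nondegeneracy and $v-\pf v\in\fo\subseteq\uu^\perp$ together characterize $\pu v$. Using it, for $v\in\vv$ set $u:=((\fg)|_\uu)^{-1}\pf v\in\uu$; then $\pf\pg u=(\fg)|_\uu u=\pf v$, so $\pg u-v\in\vv\cap\fo$. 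But $\vv\cap\fo=\{0\}$: any $v'$ there would satisfy $(\gf)|_\vv v'=\pg\pf v'=0$, forcing $v'=0$ by invertibility of $(\gf)|_\vv$ from Theorem \ref{2-inv-2}. Consequently $\pg u=v$ and $Rv=v$, and combined with $R=R^*$ and $R\hh\subseteq\vv$ this forces $R=\pv$.

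The main obstacle is not any deep estimate but the notational care needed around $((\fg)|_\uu)^{-1}$, which is only defined on $\uu$: the flanking projectors $\pu$ in the formula are exactly what keeps every composition well-defined. The pivotal simplification is the identity $\pu v=\pf v$ on $\vv$, which trades the abstract projector $\pu$ (restricted to $\vv$) for the concrete $\pf$, thereby turning statements about $R$ into identities inside $\uu$ that feed directly on the polar-decomposition structure established in Theorem \ref{andrew_ilya}.
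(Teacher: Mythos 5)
Your proof is correct. Note that the paper itself does not prove Theorem \ref{inv-proj} --- it defers to \cite[\S 2.8]{juju_thesis} --- so there is no in-paper argument to compare against; your route, restricting the polar decomposition $\gf=W\sqrt{\fgf}$ from Theorem \ref{andrew_ilya} to the reducing subspace $\uu$ (so that $\sqrt{\fgf}|_\uu=\sqrt{(\fg)|_\uu}$ is boundedly invertible) and then verifying directly that $R=\pg\pu((\fg)|_\uu)^{-1}\pu\pg$ is self-adjoint, has range in $\vv$, and fixes $\vv$ pointwise, is the natural one and is consistent with how the formula is used in the paper's proof of Theorem \ref{thm3_17}. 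All the delicate points --- invertibility via Theorem \ref{2-inv-2}, the identity $\pu v=\pf v$ on $\vv$, and $\vv\cap\fo=\{0\}$ --- are correctly identified and justified.
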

The proof of Theorem \ref{inv-proj} is straightforward and can be found in 
\cite[\S 2.8]{juju_thesis}. \citet[\S 2.9]{juju_thesis} also develops 
the theory of principal invariant subspaces, using the spectral decompositions, 
e.g., below is \cite[Theorem 2.108]{juju_thesis}: 
\begin{theorem}\label{thm3_17} 
Let $\{E_1\}$ and $\{E_2\}$ be spectral measures  
of the operators $(\fg)|_\ff$ and $(\gf)|_\gg$, respectively. Let 
$\Theta\subseteq{\Theta}(\uu,\vv)\setminus\{{\pi}/{2}\}$ 
be a closed Borel set, and define $P_{\uu(\Theta)}=\int_{\cos(\Theta)} dE_1(\lambda)$ and 
$P_{\vv(\Theta)}=\int_{{\cos(\Theta)}} dE_2(\lambda)$.
Then $\uu(\Theta)\subset\ff$ and $\vv(\Theta)\subset\gg$ is a pair of strictly nondegenerate 
principal invariant subspaces and
\begin{equation*}
P_{\vv({\Theta})} =\pg \left\{\int_{{\cos(\Theta)}} \frac{1}{\lambda} dE_1(\lambda) \right\} \pg, 
\end{equation*}
and $\Theta=\hat{\Theta}(\uu(\Theta),\vv(\Theta))=\hat{\Theta}(\vv(\Theta),\uu(\Theta)).$ 
\end{theorem}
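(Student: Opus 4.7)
The plan has four steps: (i) use the spectral theorem to check that $\uu(\Theta)$ and $\vv(\Theta)$ are closed invariant subspaces of $A:=(\fg)|_\ff$ and $B:=(\gf)|_\gg$; (ii) identify them geometrically via the polar decomposition of $\gf$ so that they form a principal invariant pair; (iii) verify the formula for $P_{\vv(\Theta)}$; (iv) read off the angles from Definition \ref{angles_from_cos}.

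For step (i), $A$ is selfadjoint on $\ff$ with $\Si(A)\subseteq[0,1]$, so $P_{\uu(\Theta)}=\int_{\cos(\Theta)}dE_1(\lambda)$ is the orthogonal projector onto an $A$-invariant closed subspace $\uu(\Theta)\subseteq\ff$; the analogous statement holds for $\vv(\Theta)\subseteq\gg$. Because $\pi/2\notin\Theta$, the integration set is bounded away from $0$, so $A|_{\uu(\Theta)}$ has the bounded inverse $R:=\int_{\cos(\Theta)}\lambda^{-1}dE_1(\lambda)$, and likewise for $B|_{\vv(\Theta)}$; by Theorem \ref{2-inv-2} this already delivers strict nondegeneracy as soon as the pair is shown to be principal invariant.

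Step (ii) is the heart of the argument and what I expect to be the main obstacle. I would use the polar decomposition $T=U|T|$ of $T=\gf$ from the proof of Theorem \ref{andrew_ilya}: since $T^*T=\fgf$ restricts to $A$ on $\ff$ and $TT^*=\gfg$ restricts to $B$ on $\gg$, the identity $TT^*=UAU^*$ gives $UA=BU$ on the domain $\ff\ominus\moi$ of $U$. The continuous functional calculus then propagates this intertwining to spectral projectors on any Borel set in $(0,1]$; since $\pi/2\notin\Theta$ keeps us in that range, $U$ maps $\uu(\Theta)$ bijectively onto $\vv(\Theta)$. On the other hand $Tu=\pg u$ for $u\in\ff$ and $T=U\sqrt{A}$, so on $\uu(\Theta)$, where $\sqrt{A}$ is a bounded bijection, $\pg\uu(\Theta)=U\sqrt{A}\uu(\Theta)=\vv(\Theta)$. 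The symmetric argument gives $\pf\vv(\Theta)=\uu(\Theta)$, and Definition \ref{princ_invar_subsp} is satisfied.

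For step (iii), $\pg R\pg$ is selfadjoint since $R$ is, so it suffices to check its action on $\vv(\Theta)$ and on $\vv(\Theta)^\perp$. For any $u\in\uu(\Theta)$ one has $\pf\pg u=Au$ and $R\pg u=RAu=u$, so for $v=\pg u\in\vv(\Theta)$, $(\pg R\pg)v=\pg u=v$. If instead $v\perp\vv(\Theta)$, then $(u,\pg v)=(\pg u,v)=0$ for every $u\in\uu(\Theta)$, which forces $P_{\uu(\Theta)}\pg v=0$ and hence $(\pg R\pg)v=\pg RP_{\uu(\Theta)}\pg v=0$. Finally, for step (iv), $P_{\vv(\Theta)}u=\pg u$ for $u\in\uu(\Theta)$ from step (iii), so $(P_{\uu(\Theta)}P_{\vv(\Theta)})|_{\uu(\Theta)}=A|_{\uu(\Theta)}$, whose spectrum is by construction the piece of $\Si(A)$ selected by $\Theta$; Definition \ref{angles_from_cos} then yields $\hat{\Theta}(\uu(\Theta),\vv(\Theta))=\Theta$, and the symmetric computation delivers $\hat{\Theta}(\vv(\Theta),\uu(\Theta))=\Theta$.
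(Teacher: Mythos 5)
Your argument is correct, but it takes a genuinely different and more self-contained route than the paper. The paper's proof is a one-line reduction: it invokes Theorem \ref{inv-proj}, which already asserts $\pv=\pg\pu\left((\fg)|_\uu\right)^{-1}\pu\pg$ for any strictly nondegenerate principal invariant pair, and its only content is the functional-calculus identity $\int_{\cos(\Theta)}\lambda^{-1}\,dE_1(\lambda)=\pu\left((\fg)|_\uu\right)^{-1}\pu$; the facts that $(\uu(\Theta),\vv(\Theta))$ is indeed such a pair and that the angles are recovered are left implicit (they would come from Theorems \ref{thm.myinvariant} and \ref{2-inv-2} plus spectral mapping, and the paper defers the proof of Theorem \ref{inv-proj} itself to the thesis). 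You instead rebuild everything from the polar decomposition $\gf=U\sqrt{\fgf}$: the intertwining $BU=UA$ on $\ff\ominus\moi$ transports spectral projectors, the boundedness of $\int_{\cos(\Theta)}\lambda^{-1}\,dE_1$ (since $\cos(\Theta)$ is compact in $(0,1]$) gives strict nondegeneracy, and you verify $\pg R\pg=P_{\vv(\Theta)}$ directly by computing its action on $\vv(\Theta)$ and on $\vv(\Theta)^{\perp}$. This buys independence from Theorem \ref{inv-proj} and actually supplies the verifications the paper omits, at the cost of length; in particular your step (ii), showing $\pg\uu(\Theta)=\vv(\Theta)$ exactly (not merely densely), is precisely the content hidden in the paper's appeal to Theorem \ref{inv-proj}. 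Two caveats you share with the paper's statement rather than introduce yourself: nontriviality of $\uu(\Theta)$ can fail for a closed Borel set meeting only the continuous spectrum (e.g.\ a singleton), so "strictly nondegenerate" needs $E_1(\cos(\Theta))\neq 0$ as an extra hypothesis; and the final equality $\hat{\Theta}(\uu(\Theta),\vv(\Theta))=\Theta$ presumes $\Sigma\bigl(A|_{\uu(\Theta)}\bigr)$ fills out all of the selected set, which is not automatic for arbitrary closed Borel subsets. You may note these, but they are defects of the statement, not of your argument.
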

\begin{proof}
We have 
$\int_{{\cos(\Theta)}} \frac{1}{\lambda} dE_1(\lambda)=
\left((\pf\pu)|_{\uu}\right)^{-1}
=\pu\left((\pf\pg)|_{\uu}\right)^{-1}\pu$
(where we denote $\uu=\uu(\Theta)$), which we plug into the 
expression for the orthogonal projector $\pv$ of Theorem \ref{inv-proj}. 
\end{proof}
For a pair of principal invariant subspaces $\uu \subset \ff$ and $\vv \subset \gg$,
using Theorems \ref{thm.myinvariant} and \ref{2-inv-2} 
we define the corresponding principal invariant subspaces in $\fo$ and $\go$ as
$\luo=\pfo\vv$ and $\lvo=\pgo\uu$, and describe their properties in the next theorem.  
\begin{theorem}\label{different_pairs_princ_invar_subsp}
Let $\uu$ and $\vv$ be a pair of principal invariant subspaces for subspaces $\ff$ and $\gg$
and $0, {\pi}/{2} \notin \hat{\Theta}(\uu, \vv) \cup \hat{\Theta}(\vv, \uu).$
Then $\luo=\pfo\vv$ and $\lvo=\pgo\uu$ are closed and
\begin{itemize}
 \item  $\uu$ and $\vv$ is a pair of  strictly nondegenerate  principal invariant subspaces for subspaces $\ff$ and $\gg$;
 \item  $\luo$ and $\vv$ is a pair of  strictly nondegenerate  principal invariant subspaces for subspaces $\fo$ and $\gg$
        and $P_{\luo}$ and $P_{\vv}$ are unitarily equivalent;
 \item  $\uu$ and $\lvo$ is a pair of  strictly nondegenerate principal invariant subspaces for subspaces $\ff$ and $\go$
        and $P_{\uu}$ and $P_{\lvo}$ are unitarily equivalent;
 \item  $\luo$ and $\lvo$ is a pair of  strictly nondegenerate principal invariant subspaces for subspaces $\fo$ and $\go$
        and $P_{\luo}$ and $P_{\lvo}$ are unitarily equivalent.
\end{itemize}
\end{theorem}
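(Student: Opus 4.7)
The first bullet is immediate from Theorem \ref{2-inv-2}: the hypothesis $\pi/2\notin\hat{\Theta}(\uu,\vv)\cup\hat{\Theta}(\vv,\uu)$ is precisely the angular characterization of strict nondegeneracy for the principal invariant pair $(\uu,\vv)$ of $(\ff,\gg)$. For the remaining bullets I plan to work out the second bullet in full, obtain the third by the $\ff\leftrightarrow\gg$, $\uu\leftrightarrow\vv$ symmetry, and verify the fourth bullet by a direct computation using the characterizations produced by the first three. Each unitary equivalence at the end will come from one application of Theorem \ref{inv-proj} to the appropriate strictly nondegenerate invariant pair.

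\textbf{Closedness of $\luo=\pfo\vv$.} Because $\uu\subseteq\ff$ gives $\pu\pfo=0$ and hence $\pu=\pu\pf$, and because $\pf\vv\subseteq\uu$ from the first bullet, for every $v\in\vv$
\begin{equation*}
\pu v=\pu\pf v=\pf v.
\end{equation*}
The Pythagorean identity $\|v\|^2=\|\pf v\|^2+\|\pfo v\|^2$, combined with $(v,\pu v)=\|\pu v\|^2$ and $\pv v=v$, then yields
\begin{equation*}
\|\pfo v\|^2=\bigl(v,\,(I-\pv\pu\pv)|_\vv\,v\bigr).
\end{equation*}
By Definition \ref{angles_from_cos}, $0\notin\hat{\Theta}(\vv,\uu)$ means $1\notin\Sigma((\pv\pu\pv)|_\vv)$; this operator is a selfadjoint contraction on $\vv$, so its norm is then strictly less than $1$ and $(I-\pv\pu\pv)|_\vv\geq\delta I|_\vv$ for some $\delta>0$. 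Therefore $\|\pfo v\|\geq\sqrt{\delta}\,\|v\|$, so $\pfo|_\vv$ is bounded below with trivial kernel and $\luo=\pfo\vv$ is closed and nontrivial.

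\textbf{Strict nondegeneracy and unitary equivalence for $(\luo,\vv)$.} For $v\in\vv$ one has $\pg\pfo v=v-\pg\pf v$, and $\pf v\in\uu$ together with $\pg\uu\subseteq\vv$ puts the right-hand side in $\vv$, giving $\pg\luo\subseteq\vv$; combined with $\pfo\vv=\luo$ this shows $(\luo,\vv)$ is principal invariant for $(\fo,\gg)$. Strict nondegeneracy requires $\pg\luo=\vv$, i.e.,\ surjectivity on $\vv$ of $v\mapsto v-\pg\pf v$. Since $\pg\pf v=\pg\pu v=\pv\pu v$ (the second equality because $\pu v\in\uu$ and $\pg\uu=\vv$), this map equals $(I-\pv\pu\pv)|_\vv$, already shown invertible. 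Theorem \ref{inv-proj} applied to the pair $(\fo,\gg)$ with invariant pair $(\luo,\vv)$ then yields a unitary that conjugates $P_{\luo}$ to $\pv$. The third bullet follows from the very same argument after interchanging $\ff\leftrightarrow\gg$ and $\uu\leftrightarrow\vv$ throughout.

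\textbf{Fourth bullet and main obstacle.} For the pair $(\luo,\lvo)$ and $(\fo,\go)$, the inclusions $\uu\subseteq\ff$ and $\vv\subseteq\gg$ give $\pfo u=0$ and $\pgo v=0$ for $u\in\uu$, $v\in\vv$, so $\pfo\pgo u=-\pfo\pg u$ and $\pgo\pfo v=-\pgo\pf v$. Combined with $\pg u\in\vv$, $\pf v\in\uu$, and the strict nondegeneracy $\vv=\pg\uu$, $\uu=\pf\vv$ of the original pair, these identities yield $\pfo\lvo=\pfo\pg\uu=\pfo\vv=\luo$ and $\pgo\luo=\pgo\pf\vv=\pgo\uu=\lvo$; hence $(\luo,\lvo)$ is strictly nondegenerate principal invariant for $(\fo,\go)$, and Theorem \ref{inv-proj} once more supplies $P_{\luo}\sim P_{\lvo}$. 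The main obstacle is the closedness step in the second bullet: the crucial insight is that $\|\pf v\|^2$ on $v\in\vv$ is exactly the quadratic form of the $\cos^2$-angle operator $(\pv\pu\pv)|_\vv$ from $\vv$ to $\uu$, obtained through the collapse $\pu v=\pf v$ forced by $\uu\subseteq\ff$, after which the hypothesis $0\notin\hat{\Theta}(\vv,\uu)$ is precisely what produces the uniform spectral gap to $1$ needed to make $\pfo|_\vv$ bounded below.
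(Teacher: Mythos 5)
Your proof is correct. The paper's own proof of this theorem is only a pointer---``the statements follow from Theorems \ref{thm.myinvariant} and \ref{2-inv-2} applied to the corresponding pairs,'' with closedness of $\luo,\lvo$ referred to Theorem \ref{angles_deutsch} and Deutsch's closed-range criterion---so what you have done is supply the actual verification. Your route is essentially the intended one (bullet one from Theorem \ref{2-inv-2}, the unitary equivalences from Theorem \ref{inv-proj}), but your treatment of closedness is genuinely more self-contained than the paper's: the identity $\|\pfo v\|^2=\bigl(v,(I-\pv\pu\pv)|_\vv v\bigr)$ together with the observation that $0\notin\hat\Theta(\vv,\uu)$ forces $\|(\pv\pu)|_\vv\|<1$ gives the uniform lower bound $\|\pfo v\|\ge\sqrt{\delta}\,\|v\|$ directly, avoiding any appeal to the external Friedrichs-angle closed-sum theorem. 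The same invertibility of $(I-\pv\pu\pv)|_\vv$ then doubles as the surjectivity needed for strict nondegeneracy of $(\luo,\vv)$, which is an economical touch. The only implicit assumption you share with the paper is nontriviality of $\uu$ and $\vv$; everything else checks out, including the sign-absorbing set identities $\pfo\pgo\uu=\pfo\pg\uu$ used in the fourth bullet.
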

\begin{proof}
The statements follow directly from Theorems \ref{thm.myinvariant} and \ref{2-inv-2} 
applied to the corresponding pairs of subspaces. 
The closedness of $\luo$ and $\lvo$ can be alternatively derived from 
Theorem \ref{angles_deutsch} and \cite[Theorem 22]{deutsch}.
\end{proof}
\subsection{Principal Subspaces and Principal Vectors}\label{princ_vectors_3}
For a pair of principal invariant subspaces 
$\uu\subset\ff$ and $\vv\subset\gg$, if  
the spectrum $\Si((\fg)|_\uu)$ consists of one number,
which belongs to $(0,1]$ and which
we denote by $\cos^2(\theta)$, we can use Theorem \ref {inv-proj}
to define a pair of principal subspaces
corresponding to an angle $\theta$:
\begin{definition}\label{principal_subspaces}
Let $\theta\in\Theta(\ff,\gg)\setminus\{{\pi}/{2}\}$.  
Nontrivial subspaces $\uu\subseteq \ff$ and $\vv\subseteq\gg$ define a pair of
principal subspaces for subspaces $\ff$ and $\gg$ corresponding to the angle $\theta$ if
$(\pf \pv)|_\ff={\cos}^2(\theta) \pu$ and $(\pg \pu)|_\gg= {\cos}^2(\theta) \pv.$
Normalized vectors $u=u(\theta)\in\ff$ and $v=v(\theta)\in\gg$ form a pair
of principal vectors for subspaces $\ff$ and $\gg$ corresponding to the angle
$\theta$ if $\pf v=\cos(\theta) u$ and $\pg u= \cos(\theta) v.$
\end{definition}
We exclude $\theta=\pi/2$ in Definition \ref{principal_subspaces} so that 
principal subspaces belong to the class of strictly nondegenerate principal invariant subspaces. 
We describe the main properties of principal subspaces and principal vectors
that can be checked directly (for details, see \cite{juju_thesis}).
The first property characterizes principal subspaces as
eigenspaces of the products of the corresponding projectors.
\begin{theorem}\label{principal_subsp_eigenspaces}
Subspaces $\uu\subset\ff$ and $\vv\subset\gg$
form a pair of principal subspaces for
subspaces $\ff$ and $\gg$ corresponding to the angle
$\theta \in \Theta(\ff, \gg) \setminus \{{\pi}/{2} \}$
if and only if $\theta \in \Theta_p(\ff, \gg) \setminus \{ {\pi}/{2} \}$
and $\uu$ and $\vv$ are the eigenspaces of the operators
$(\fg)|_\ff$ and $(\gf)|_\gg$, respectively, corresponding
to the eigenvalue ${\cos}^2(\theta)$.
In such a case, $\Theta(\uu, \vv)=\Theta_p(\uu, \vv)=\{\theta \}$.
All pairs of principal vectors $u$ and $v$
of subspaces $\ff$ and $\gg$ corresponding to the angle
$\theta$ generate the largest principal subspaces $\uu$ and $\vv$ 
corresponding to the angle $\theta$.  
\end{theorem}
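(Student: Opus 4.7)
The plan is to split the biconditional into two implications, then derive $\Theta(\uu,\vv)=\Theta_p(\uu,\vv)=\{\theta\}$ as an immediate corollary, and finally address the claim about principal vectors. Throughout, I use the easy identities $\pu=\pu\pf$ (from $\uu\subseteq\ff$) and $\pv=\pv\pg=\pg\pv$ (from $\vv\subseteq\gg$), along with the standing assumption $\theta\ne\pi/2$, so $\cos^2(\theta)\ne 0$.

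For the forward direction, assume the two identities of Definition \ref{principal_subspaces}. The key step is to establish $\pv u=\pg u$ for each $u\in\uu$, for then $\pf\pg u=\pf\pv u=\cos^2(\theta)\pu u=\cos^2(\theta)u$, placing $\uu$ inside the eigenspace of $(\fg)|_\ff$ at $\cos^2(\theta)$. To prove $\pv u=\pg u$, first compute $\pu\pv u=\pu\pf\pv u=\pu\cos^2(\theta)u=\cos^2(\theta)u$; applying $\pg$ from the left gives $\pg\pu\pv u=\cos^2(\theta)\pg u$. Since $\pv u\in\vv\subseteq\gg$, the second identity in Definition \ref{principal_subspaces} yields $\pg\pu\pv u=\cos^2(\theta)\pv\pv u=\cos^2(\theta)\pv u$. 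Equating the two expressions and cancelling $\cos^2(\theta)$ delivers the claim. The symmetric argument shows $\vv$ lies in the eigenspace of $(\gf)|_\gg$ at $\cos^2(\theta)$, so $\theta\in\Theta_p(\ff,\gg)\setminus\{\pi/2\}$.

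For the backward direction, take $\uu$ and $\vv$ as the (largest) eigenspaces of $(\fg)|_\ff$ and $(\gf)|_\gg$ at $\cos^2(\theta)$. I must verify $(\pf\pv)|_\ff=\cos^2(\theta)\pu$, which reduces to (a) $\pf\pv u=\cos^2(\theta)u$ for $u\in\uu$, and (b) $\pv u=0$ for $u\in\ff\ominus\uu$. Claim (a) follows because $\pg\pf\pg u=\pg\cos^2(\theta)u=\cos^2(\theta)\pg u$ shows $\pg u\in\vv$, so $\pv u=\pv\pg u=\pg u$ and $\pf\pv u=\pf\pg u=\cos^2(\theta)u$. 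Claim (b), which I anticipate to be the main obstacle, is obtained by duality: for $v\in\vv$, applying $\pf$ to $\pg\pf v=\cos^2(\theta)v$ shows $\pf v$ is an eigenvector of $(\fg)|_\ff$ at $\cos^2(\theta)$, hence $\pf v\in\uu$; consequently the range of $\pf|_\vv:\vv\to\ff$ is contained in $\uu$, and its Hilbert adjoint $\pv|_\ff:\ff\to\vv$ has kernel containing the orthogonal complement $\ff\ominus\uu$ of this range in $\ff$, yielding $\pv u=0$ there. The companion identity $(\pg\pu)|_\gg=\cos^2(\theta)\pv$ follows by symmetry.

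The equality $\Theta(\uu,\vv)=\Theta_p(\uu,\vv)=\{\theta\}$ is then immediate: for $u\in\uu$, $\pu\pv u=\pu\pf\pv u=\cos^2(\theta)\pu u=\cos^2(\theta)u$, so $(\pu\pv)|_\uu=\cos^2(\theta)I_\uu$ has spectrum $\{\cos^2(\theta)\}$, and the analogous computation holds on $\vv$. For the final claim, a principal vector pair $(u,v)$ satisfies $\pf\pg u=\cos(\theta)\pf v=\cos^2(\theta)u$, so $u\in\uu$ and similarly $v\in\vv$; conversely, every unit $u\in\uu$ yields a principal vector pair via $v=\pg u/\cos(\theta)$, where $\|\pg u\|^2=(\pf\pg u,u)=\cos^2(\theta)$ provides the correct normalization. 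Hence the closed linear spans of all such $u$ and $v$ exhaust $\uu$ and $\vv$, the largest principal subspaces for angle $\theta$.
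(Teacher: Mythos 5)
Your proposal is correct, and in fact supplies an argument the paper itself omits: the paper merely asserts that these properties ``can be checked directly'' and defers all details to the thesis \cite{juju_thesis}, so there is no in-paper proof to compare against. Your direct verification is sound at every step --- the cancellation trick $\pg\pu\pv u=\cos^2(\theta)\pg u=\cos^2(\theta)\pv u$ giving $\pv u=\pg u$ in the forward direction, the duality argument $\pf\vv\subseteq\uu\Rightarrow\pv(\ff\ominus\uu)=\{0\}$ in the backward direction, and the normalization $\|\pg u\|^2=(\pf\pg u,u)=\cos^2(\theta)$ for the principal-vector correspondence are all the right computations. One point deserves explicit comment: your forward direction establishes only that $\uu$ and $\vv$ are \emph{contained in} the eigenspaces of $(\fg)|_\ff$ and $(\gf)|_\gg$ at $\cos^2(\theta)$, not that they equal them --- and this is in fact the correct conclusion, since a proper subspace $\uu$ of the full eigenspace paired with $\vv=\pg\uu$ (suitably normalized) also satisfies Definition \ref{principal_subspaces}, as the theorem itself tacitly concedes by speaking of the ``largest'' principal subspaces in its final sentence. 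So the literal phrase ``$\uu$ and $\vv$ are the eigenspaces'' must be read as ``consist of eigenvectors,'' and under that reading your two directions (containment one way, the full eigenspaces forming a principal pair the other way, with the largest pair generated by all principal vectors) capture exactly the intended content.
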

\begin{theorem}\label{principal_subsp_properties}
Let $\uu(\theta),\,\uu(\phi) \subset \ff$,
and $\vv(\theta),\,\vv(\phi) \subset \gg$
be the principal subspaces for subspaces $\ff$ and $\gg$ corresponding to the
angles $\theta,  \phi \in \Theta_p(\ff, \gg) \setminus \{{\pi}/{2} \} $. Then 
$P_{\uu(\theta)} P_{\uu(\phi)}=P_{\uu(\theta) \cap \, \uu(\phi)}$; 
       $P_{\vv(\theta)} P_{\vv(\phi)}=P_{\vv(\theta) \cap \vv(\phi)}$ ;
$P_{\uu(\theta)}$ and $P_{\vv(\phi)}$ are mutually orthogonal if
       $\theta \neq \phi$ (if $\theta = \phi$ we can
       choose $\vv(\theta)$ such that $P_{\uu(\theta)} P_{\vv(\theta)}=P_{\uu(\theta)} \pg$);
for given $\uu(\theta)$ we can choose $\vv(\theta)$ such that
       $P_{\vv(\theta)} P_{\uu(\theta)}=P_{\vv(\theta)} \pf$.
\end{theorem}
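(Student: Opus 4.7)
The plan is to derive every claim from Theorem \ref{principal_subsp_eigenspaces}, which identifies $\uu(\theta)$ and $\vv(\theta)$ with the eigenspaces of the selfadjoint operators $(\fg)|_\ff$ and $(\gf)|_\gg$ at the eigenvalue $\cos^2(\theta)$. One preliminary observation will be used throughout: $\pg \uu(\theta) \subseteq \vv(\theta)$ and, symmetrically, $\pf \vv(\theta) \subseteq \uu(\theta)$. Indeed, for $u \in \uu(\theta)$ one computes $(\gf)|_\gg (\pg u) = \pg \pf \pg u = \cos^2(\theta) \pg u$, so $\pg u$ belongs to the $\cos^2(\theta)$-eigenspace of $(\gf)|_\gg$, which is precisely $\vv(\theta)$.

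For claim (a), selfadjointness of $(\fg)|_\ff$ forces its eigenspaces at distinct eigenvalues to be orthogonal, so $\theta \neq \phi$ gives $\uu(\theta) \perp \uu(\phi)$ and both sides vanish, while $\theta = \phi$ gives $\uu(\theta) = \uu(\phi)$ and both sides equal $P_{\uu(\theta)}$. Claim (b) follows identically using $(\gf)|_\gg$. For the first half of (c), with $\theta \neq \phi$, take $u \in \uu(\theta)$ and $v \in \vv(\phi) \subseteq \gg$; rewriting $(u, v) = (\pg u, v)$ and applying the preliminary observation places $\pg u \in \vv(\theta)$, which is orthogonal to $\vv(\phi)$ by spectral orthogonality of $(\gf)|_\gg$, so $(u, v) = 0$ and $P_{\uu(\theta)} P_{\vv(\phi)} = 0$.

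The parenthetical statement in (c) and claim (d) both reduce to the two orthogonalities $\uu(\theta) \perp (\gg \ominus \vv(\theta))$ and $\vv(\theta) \perp (\ff \ominus \uu(\theta))$. For the first, pick $u \in \uu(\theta)$ and $g \in \gg \ominus \vv(\theta)$, decompose $u = \pg u + \pgo u$, and note that $\pgo u \perp g$ (as $g \in \gg$) while $\pg u \perp g$ (as $\pg u \in \vv(\theta)$ by the preliminary observation); summing gives $(u, g) = 0$. The second orthogonality follows symmetrically using $v = \pf v + \pfo v$ together with $\pf v \in \uu(\theta)$. Writing $\pg - P_{\vv(\theta)} = P_{\gg \ominus \vv(\theta)}$ then converts the first orthogonality into $P_{\uu(\theta)} P_{\vv(\theta)} = P_{\uu(\theta)} \pg$, and similarly $\pf - P_{\uu(\theta)} = P_{\ff \ominus \uu(\theta)}$ yields $P_{\vv(\theta)} P_{\uu(\theta)} = P_{\vv(\theta)} \pf$.

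The main obstacle I anticipate is notational rather than conceptual, namely pinning down which ``principal subspaces'' are meant. Reading $\uu(\theta)$ and $\vv(\theta)$ as the largest (eigen)spaces furnished by Theorem \ref{principal_subsp_eigenspaces} makes the ``choice'' of $\vv(\theta)$ in (c) and (d) automatic, since $\vv(\theta)$ is then uniquely determined by $\theta$. Once this interpretation is fixed, the remainder is a mechanical application of spectral orthogonality, combined with the orthogonal decompositions $I = \pg + \pgo$ and $I = \pf + \pfo$.
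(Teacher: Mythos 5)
Your proof is correct; the paper offers no argument of its own for this theorem (it states that these properties ``can be checked directly'' and defers to \cite{juju_thesis}), and your direct verification via the eigenspace characterization of Theorem \ref{principal_subsp_eigenspaces}, together with the observation $\pg\uu(\theta)\subseteq\vv(\theta)$, is exactly such a check. Your decision to read $\uu(\theta)$, $\vv(\theta)$ as the largest principal subspaces (the full eigenspaces) is the right resolution of the ambiguity---it is forced by the claims involving $\theta=\phi$, which would fail for two non-commuting subspaces of a single eigenspace---and under that reading the ``chosen'' $\vv(\theta)$ is simply the unique partner $\overline{\pg\,\uu(\theta)}$ of $\uu(\theta)$.
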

\begin{cor}\label{diff_prin_sub}[of Theorem \ref{different_pairs_princ_invar_subsp}]
Let $\uu$ and $\vv$ be the principal subspaces for subspaces $\ff$ and $\gg$, corresponding to
the angle $\theta \in \Theta_p(\ff, \gg) \setminus ( \{0\} \cup \{{\pi}/{2} \} )$. 
Then $\luo=\pfo\vv$  and $\lvo=\pgo\uu$ are closed and
\begin{itemize}
 \item  $\luo$, $\vv$ are the principal subspaces for subspaces $\fo$ and $\gg$, corresponding
 to the angle ${\pi}/{2} - \theta$;
 \item  $\uu$, $\lvo$ are the principal subspaces for subspaces $\ff$ and $\go$, corresponding
 to the angle ${\pi}/{2} - \theta$;
 \item  $\luo$, $\lvo$ are the principal subspaces for subspaces $\fo$ and $\go$, corresponding
 to the angle $\theta$.
\end{itemize}
Let $u$ and $v$ form a pair of principal vectors for the subspaces $\ff$ and $\gg$,
corresponding to the angle $\theta$.
Then $\ulo = (v- \cos(\theta) u)/ \sin(\theta)$ and $\vlo =(u - \cos(\theta) v)/ \sin(\theta)$
together with $u$ and $v$ describe the pairs of principal vectors. 
\end{cor}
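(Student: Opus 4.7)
The plan is to decouple the statement into two independent parts: the structural claims about the orthogonal‐complement principal subspaces follow from Theorem \ref{different_pairs_princ_invar_subsp} once the correct angle is identified, while the formulas for $\ulo$ and $\vlo$ are obtained by a short direct calculation.

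First, since $\theta \in (0, \pi/2)$, Theorem \ref{principal_subsp_eigenspaces} together with Theorem \ref{2-inv-2} guarantees that $\uu\subset\ff$ and $\vv\subset\gg$ form a strictly nondegenerate pair of principal invariant subspaces with $0,\pi/2\notin \hat{\Theta}(\uu,\vv)\cup\hat{\Theta}(\vv,\uu)$. Theorem \ref{different_pairs_princ_invar_subsp} then applies verbatim and yields that $\luo=\pfo\vv$ and $\lvo=\pgo\uu$ are closed and form strictly nondegenerate principal invariant pairs for $(\fo,\gg)$, $(\ff,\go)$, and $(\fo,\go)$. To upgrade these to \emph{principal} subspaces, I need to verify that the spectrum of the corresponding restricted projector product reduces to a single point. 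I would apply the spectral mapping $\lambda\mapsto 1-\lambda$ used in the proof of Theorem \ref{7_relations}(1), which converts $(\fg)|_\uu=\cos^2(\theta)\pu$ into $(\pf\pgo)|_\uu=\sin^2(\theta)\pu$, showing that the eigenvalue on $\uu$ for the pair $(\ff,\go)$ is $\sin^2(\theta)=\cos^2(\pi/2-\theta)$. The same reasoning applied to the pair $(\fo,\gg)$ gives angle $\pi/2-\theta$, and applied twice for $(\fo,\go)$ returns $\cos^2(\theta)$, i.e.\ angle $\theta$, matching the three bulleted claims.

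Second, for the explicit formulas I would verify directly, using only $\pf u=u$, $\pg v=v$, $\pf v=\cos(\theta)u$, $\pg u=\cos(\theta)v$, and $(u,v)=(u,\pf v)=\cos(\theta)$. Since $\pf(v-\cos(\theta)u)=\cos(\theta)u-\cos(\theta)u=0$, the vector $v-\cos(\theta)u$ lies in $\fo$, and its squared norm simplifies to $1-2\cos^2(\theta)+\cos^2(\theta)=\sin^2(\theta)$, so $\ulo=(v-\cos(\theta)u)/\sin(\theta)$ is a unit vector spanning $\luo=\pfo\vv$ (and analogously for $\vlo$). The required principal-vector identities, e.g.\ $\pfo\vlo=\pm\cos(\theta)\ulo$ and $\pgo\ulo=\pm\cos(\theta)\vlo$ as well as the mixed relations $\pf\ulo=0$, $\pfo v=\sin(\theta)\ulo$, are then straightforward expansions using $\pfo=I-\pf$ and $\pgo=I-\pg$.

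The only delicate point is bookkeeping of signs in the principal-vector relations: a direct computation gives, for instance, $\pfo\vlo=-\cos(\theta)\ulo$, so one must either allow an inessential sign in Definition \ref{principal_subspaces} or replace one of $\ulo,\vlo$ by its negative. This is a cosmetic issue, absorbed by the inherent sign ambiguity of normalized principal vectors; there is no genuine analytical obstacle, and everything else is a routine substitution.
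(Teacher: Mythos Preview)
Your proposal is correct and matches the approach the paper intends: the corollary is stated without proof, as an immediate consequence of Theorem~\ref{different_pairs_princ_invar_subsp}, and your first part supplies precisely the missing step---identifying the single angle via the relation $(\pf\pgo)|_\ff=I|_\ff-(\fg)|_\ff$ from the proof of Theorem~\ref{7_relations}(1)---while your second part carries out the elementary vector computation the paper leaves to the reader. Your remark on the sign is apt and explains why the paper uses the deliberately loose word ``describe'' rather than asserting that $(\ulo,\vlo)$ literally satisfy Definition~\ref{principal_subspaces} for every listed pair.
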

\section{Bounding the Changes in the Angles}\label{prox_angl}
Here we prove bounds on the change in the (squared cosines
of the) angles from one subspace to another where the subspaces change.
These bounds allow one to estimate the sensitivity of the
angles with respect to the changes in the subspaces.
For the finite dimensional case, such bounds are known, e.g., \cite{ka02,ka03}.
To measure the distance between two bounded real sets $S_1$ and $S_2$
we use the Hausdorff distance, e.g., \cite{kato},
$\dist(S_1,S_2)=\max\{\sup_{u\in S_1}\dist(u,S_2),\sup_{v\in S_2}\dist(v,S_1) \},$
where $\dist(u,S)= \inf_{v\in S}|u-v|$ is the distance from the point $u$ to the set $S$.
The following theorem estimates the proximity of the set of squares
of cosines of $\hat{\Theta}(\ff, \gg)$ and the set of squares of cosines of
$\hat{\Theta}(\ff, \ggt)$, where $\ff$, $\gg$ and $\ggt$ are nontrivial subspaces of $\hh$.
\begin{theorem}\label{cosines_squared}
$
\dist(\cos^2(\hat{\Theta}(\ff,\gg)), \cos^2(\hat{\Theta}(\ff,\ggt))) \leq \gap(\gg, \ggt).
$
\end{theorem}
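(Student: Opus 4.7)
The plan is to reinterpret both sides of the claimed inequality as spectra of selfadjoint operators on $\ff$, and then invoke the classical spectral variation inequality for bounded selfadjoint operators on a Hilbert space.

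First, by Definition \ref{angles_from_cos}, we have the identifications $\cos^2(\hat{\Theta}(\ff,\gg)) = \Sigma((\fg)|_{\ff})$ and $\cos^2(\hat{\Theta}(\ff,\ggt)) = \Sigma((\pf\pgt)|_{\ff})$. Set $A = (\fg)|_{\ff}$ and $B = (\pf\pgt)|_{\ff}$. Both are bounded selfadjoint operators on the Hilbert space $\ff$: for $f_1,f_2\in\ff$,
\[
(Af_1,f_2)=(\pg f_1,\pf f_2)=(\pg f_1,f_2)=(f_1,\pg f_2)=(f_1,\pf\pg f_2)=(f_1,Af_2),
\]
and the same computation works for $B$.

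Second, I would estimate the norm of the difference. Since $A-B=(\pf(\pg-\pgt))|_{\ff}$ and $\pf$ is a projector, $\|A-B\|\le\|\pf\|\,\|\pg-\pgt\|=\gap(\gg,\ggt)$.

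Third, the proof reduces to the classical fact that for bounded selfadjoint operators $A,B$ on a Hilbert space, $\dist(\Sigma(A),\Sigma(B))\le\|A-B\|$. A short resolvent argument suffices: if $\lambda\in\Sigma(A)$ and $\dist(\lambda,\Sigma(B))>\|A-B\|$, then selfadjointness of $B$ gives $\|(B-\lambda I)^{-1}\|=1/\dist(\lambda,\Sigma(B))<1/\|A-B\|$, so the Neumann series shows that $I+(B-\lambda I)^{-1}(A-B)$ is invertible, whence $A-\lambda I=(B-\lambda I)[I+(B-\lambda I)^{-1}(A-B)]$ is invertible, contradicting $\lambda\in\Sigma(A)$. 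Swapping the roles of $A$ and $B$ gives the full Hausdorff bound. Combining the three steps yields the theorem.

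There is no real obstacle here; the only subtle point is keeping track of which operators are selfadjoint so that the sharp spectral variation bound applies (as opposed to a Bauer--Fike type estimate for the non-normal product $\pf(\pg-\pgt)$ viewed on $\hh$), and confirming that the restrictions $(\fg)|_\ff$ and $(\pf\pgt)|_\ff$ are genuinely selfadjoint on $\ff$ even though $\fg$ itself is not selfadjoint on $\hh$.
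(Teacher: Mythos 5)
Your proposal is correct and follows essentially the same route as the paper: identify the squared cosines with the spectra of the selfadjoint operators $(\fg)|_{\ff}$ and $(\pf\pgt)|_{\ff}$, bound $\|(\pf(\pg-\pgt))|_{\ff}\|\leq\|\pg-\pgt\|=\gap(\gg,\ggt)$, and apply the spectral variation bound for selfadjoint operators. The only difference is that the paper simply cites this last bound from Kato (Theorem 4.10, p.~291), whereas you reprove it via the standard resolvent/Neumann-series argument.
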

\begin{proof}
$\cos^2(\hat{\Theta}(\ff,\gg))=\Sigma((\fg)|_{\ff})$
and $\cos^2(\hat{\Theta}(\ff,\ggt))=\Sigma((\pf\pgt)|_{\ff})$ by Definition \ref{angles_from_cos}. Both
operators $(\fg)|_\ff$ and $(\pf\pgt)|_{\ff}$ are selfadjoint.
By \cite[Theorem 4.10, p. 291]{kato}, 
\[\dist(\Sigma((\fg)|_{\ff}), \Sigma((\pf\pgt)|_{\ff}))\leq\|(\fg)|_{\ff}-(\pf\pgt)|_{\ff}\|.\]
Then,
$\|(\fg)|_{\ff}-(\pf\pgt)|_{\ff}\|\leq\|\pf\|\|\pg-\pgt\|\|\pf\|\leq\gap(\gg, \ggt).$
\end{proof}
The same result holds also if the first subspace, $\ff$, is changed in  $\hat{\Theta}(\ff,\gg))$:
\begin{theorem}\label{cosines_squared_first}
$
\dist(\cos^2(\hat{\Theta}(\ff,\gg)), \cos^2(\hat{\Theta}(\fft,\gg))) \leq \gap(\ff, \fft).
$
\end{theorem}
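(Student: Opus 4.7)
The plan is to mirror the proof of Theorem~\ref{cosines_squared}, but since the two operators $(\fg)|_{\ff}$ and $(\pft\pg)|_{\fft}$ now live on different subspaces, the comparison must be relocated to the one subspace---namely $\gg$---that is common to both sides. The relevant selfadjoint operators are $(\pg\pf)|_{\gg}$ and $(\pg\pft)|_{\gg}$, both acting on $\gg$, with norm difference bounded by $\|\pg(\pf-\pft)\|\leq\|\pf-\pft\|=\gap(\ff,\fft)$. Kato's spectral distance theorem \cite[Theorem 4.10, p.~291]{kato} therefore yields
\[
\dist\bigl(\cos^2(\hat\Theta(\gg,\ff)),\,\cos^2(\hat\Theta(\gg,\fft))\bigr)\leq \gap(\ff,\fft),
\]
which is precisely Theorem~\ref{cosines_squared} with the formal renaming $\ff\leftrightarrow\gg$ and $\gg,\ggt\to\ff,\fft$. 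Alternatively, one may observe that by Theorem~\ref{andrew_ilya} the operators $\fgf$ and $\gfg$ (and likewise $\pft\pg\pft$ and $\pg\pft\pg$) are unitarily equivalent on $\hh$, reducing to the same norm estimate $\|\pg(\pf-\pft)\pg\|\leq\gap(\ff,\fft)$.

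To conclude the theorem as stated, we must convert this ``from $\gg$'' bound into the desired ``from $\ff,\fft$'' bound. By Theorem~\ref{7_relations}(2), the sets $\cos^2(\hat\Theta(\ff,\gg))$ and $\cos^2(\hat\Theta(\gg,\ff))$ differ only by the possible presence of the element $0=\cos^2(\pi/2)$, whose multiplicities reflect $\dim\moi$ and $\dim\mio$, respectively; the same applies to $\fft$. When both $\cos^2(\hat\Theta(\ff,\gg))$ and $\cos^2(\hat\Theta(\fft,\gg))$ contain $0$, or when neither does, the Hausdorff bound transfers verbatim.

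The main obstacle is the asymmetric case, where $0$ belongs to one of the two sets but not the other---say $0\in\cos^2(\hat\Theta(\ff,\gg))$ while $0\notin\cos^2(\hat\Theta(\fft,\gg))$---since removing $\{0\}$ from one side but not the other can in principle inflate the Hausdorff distance. To handle this, I would take a Weyl sequence $\{f_n\}\subset\ff$ with $\|f_n\|=1$ and $\|\pg f_n\|\to 0$, set $\tilde f_n=\pft f_n\in\fft$, and use $\|\tilde f_n\|^2\geq 1-\gap^2$ together with $\|\pg\tilde f_n\|\leq\|\pg(\pft-\pf)f_n\|+\|\pg f_n\|\to\gap(\ff,\fft)$. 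The resulting upper bound on the Rayleigh quotient of $(\pft\pg)|_{\fft}$ at $\tilde f_n$ forces the infimum of $\cos^2(\hat\Theta(\fft,\gg))$ to be of order $O(\gap^2)$, so that the missing $\{0\}$ in that set is replaced by nearby spectral values. Combining this with the ``from $\gg$'' bound from the first paragraph controls the Hausdorff contribution of the asymmetric $\{0\}$, yielding the stated estimate $\leq\gap(\ff,\fft)$.
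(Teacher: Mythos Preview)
Your strategy is genuinely different from the paper's. The paper does \emph{not} attempt the symmetry reduction to Theorem~\ref{cosines_squared}; instead it invokes Theorem~\ref{ritz_estim} with $A=\pg$, whose proof transports $(\pft\pg)|_{\fft}$ to $\ff$ via the unitary $W$ of \eqref{dav_ka_ka} and compares the two operators on $\ff$ directly, never touching the $\pi/2$ issue at all. The key device is Lemma~\ref{rayleigh_quotient} applied to $f$ and $Wf$, which yields the exact constant $\gap(\ff,\fft)$ without any loss.

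Your approach, by contrast, has real gaps in the treatment of the point~$0$. First, the claim that ``when neither $\cos^2(\hat\Theta(\ff,\gg))$ nor $\cos^2(\hat\Theta(\fft,\gg))$ contains $0$ the bound transfers verbatim'' is not justified: one can have $0\notin\Sigma((\fg)|_\ff)$ while $0\in\Sigma((\gf)|_\gg)$ (take $\dim\ff<\dim\gg$), so $S_i:=\cos^2(\hat\Theta(\ff_i,\gg))$ equals $T_i:=\cos^2(\hat\Theta(\gg,\ff_i))$ with $\{0\}$ \emph{removed}, and removing an extreme point from each of two sets can strictly increase the Hausdorff distance between them. Second, in the asymmetric case your Weyl-sequence estimate only gives
\[
\inf S_2 \;\le\; \frac{\gap^2(\ff,\fft)}{1-\gap^2(\ff,\fft)},
\]
which exceeds $\gap(\ff,\fft)$ once $\gap(\ff,\fft)>(\sqrt{5}-1)/2$, so the phrase ``of order $O(\gap^2)$'' does not deliver the stated inequality with constant~$1$. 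The vague ``combining this with the `from $\gg$' bound'' does not close either gap: the from-$\gg$ bound controls $\dist(T_1,T_2)$, but the problematic contribution to $\dist(S_1,S_2)$ comes precisely from the point $0$ that may lie in $T_i\setminus S_i$ or $S_i\setminus T_i$, and nothing you have written bounds the distance from that isolated $0$ to the \emph{rest} of the other set by~$\gap$. To rescue the argument one would essentially need the unitary-transport idea of Theorem~\ref{ritz_estim}, which is exactly what the paper uses.
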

\begin{proof}
The statement of the theorem immediately follows from Theorem \ref{ritz_estim},
which is independently proved in the next section, where one takes 
$A=\pg$.
\end{proof}
We conjecture that similar generalizations to the case of infinite dimensional subspaces 
can be made for bounds involving changes in the sines and cosines (without squares) 
of the angles extending known bounds \cite{ka02,ka03} for the finite dimensional case. 
\section{Changes in the Ritz Values and Rayleigh-Ritz error bounds}\label{ritz_val}
Here we estimate how Ritz values of a
selfadjoint operator change with the change of a vector,
and then we extend this result to estimate the change of
Ritz values with the change of a (infinite dimensional) trial subspace, 
using the gap between subspaces, $\gap(\ff,\gg)=\|P_{\ff}-P_{\gg}\|.$ 
Such results are natural extensions of the results of the
previous section that bound the change in the squared
cosines or sines of the angles, since in the particular case where
the  selfadjoint operator is an orthogonal projector
its Ritz values are exactly the squared
cosines of the angles from the trial subspace of the
Rayleigh-Ritz method  to the range of the orthogonal projector.
In addition, we prove a spectrum error bound that 
characterizes the change in the Ritz values for an invariant subspace, 
and naturally involves the gap squared; see \cite{ko06,akpp06,ka07} for 
similar finite dimensional results. 

Let $A \in \bb(\hh)$ be a selfadjoint operator. Denote by
$\lambda(f)=(f,Af)/(f,f)$ the Rayleigh quotient of an operator $A$
at a vector $f\neq0$.
In the following lemma, we estimate changes in the Rayleigh quotient
with the change in a vector. This estimate has been
previously proven only for real finite dimensional spaces \cite{ka03}.
Here, we give a new proof that works both for real and
complex spaces.
\begin{lemma}\label{rayleigh_quotient}
Let $A \in \bb(\hh)$ be a selfadjoint operator on a Hilbert space $\hh$
and $f, g \in \hh$ with $f, g \neq 0$.
Then
\begin{equation} \label{rayleigh_perturb}
|\lambda (f) - \lambda (g)| \leq
(\max\{\Sigma(A)\} - \min\{\Sigma(A)\})  
\sin(\theta (f,g)).
\end{equation}
\end{lemma}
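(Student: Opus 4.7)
The plan is to reduce to a unit-norm, spectrally-centered configuration and then apply a symmetric bilinear identity that is available because $A$ is selfadjoint. Specifically, I would first observe that both sides of \eqref{rayleigh_perturb} are invariant under scaling $f$ and $g$, so I may assume $\|f\|=\|g\|=1$. Next, setting $\alpha=\min\Sigma(A)$, $\beta=\max\Sigma(A)$, and $B=A-\tfrac{\alpha+\beta}{2}I$, the difference $\lambda(f)-\lambda(g)=(f,Bf)-(g,Bg)$ is unchanged, while $B$ is now selfadjoint with $\|B\|=(\beta-\alpha)/2$. So the target inequality becomes
\[
\bigl|(f,Bf)-(g,Bg)\bigr|\ \leq\ 2\|B\|\sin\theta(f,g).
\]

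The algebraic core is the identity
\[
(f,Bf)-(g,Bg)=\operatorname{Re}\bigl(f-g,\,B(f+g)\bigr),
\]
which I would verify in one line by expanding the right-hand side: the cross terms $(f,Bg)-(g,Bf)$ are purely imaginary because $B=B^*$, so only the real parts $(f,Bf)-(g,Bg)$ survive under $\operatorname{Re}$. Cauchy--Schwarz then gives $|(f,Bf)-(g,Bg)|\leq \|B\|\,\|f-g\|\,\|f+g\|$.

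The remaining obstacle, which is the one point where the argument must be upgraded beyond the real case, is controlling $\|f-g\|\,\|f+g\|$: if $(f,g)$ is real and nonnegative, then $\|f\pm g\|^{2}=2\pm 2(f,g)$ and the product telescopes to $2\sqrt{1-(f,g)^{2}}=2\sin\theta(f,g)$. Over $\mathbb{C}$ the inner product $(f,g)$ is generally a complex scalar of modulus $\cos\theta(f,g)$, so I would first replace $g$ by $e^{i\varphi}g$ with $\varphi=-\arg(f,g)$; this leaves both $\lambda(g)=(g,Bg)$ and the angle $\theta(f,g)=\arccos|(f,g)|$ untouched, but renders $(f,g)\geq 0$, after which the same telescoping applies.

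Combining the three estimates yields $|(f,Bf)-(g,Bg)|\leq 2\|B\|\sin\theta(f,g)=(\beta-\alpha)\sin\theta(f,g)$, which is exactly \eqref{rayleigh_perturb}. I expect the only subtle point to be the phase rotation, since without it one would only obtain the weaker bound $2\|B\|$ (ignoring $\sin\theta$) from Cauchy--Schwarz after using $\|f-g\|\,\|f+g\|\leq 2$; the phase normalization is what converts this global bound into the angle-dependent sharp one, and it is essentially the reason the proof works uniformly for real and complex Hilbert spaces.
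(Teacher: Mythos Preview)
Your proof is correct and takes a genuinely different route from the paper's. The paper proceeds by \emph{mini-dimensional reduction}: it passes to the two-dimensional subspace $S=\Span\{f,g\}$, compresses $A$ to $\tilde A=(P_SA)|_S$, and then shifts and scales so that the resulting $2\times 2$ operator becomes an orthogonal projector $P_h$ onto a line in $S$. The Rayleigh quotients become $\cos^2\theta(h,f)$ and $\cos^2\theta(h,g)$, and the final step is the projector inequality $\bigl|\|P_hP_fP_h\|-\|P_hP_gP_h\|\bigr|\le\|P_f-P_g\|=\sin\theta(f,g)$.

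Your argument bypasses the $2$D compression entirely: spectral centering plus the bilinear identity $(f,Bf)-(g,Bg)=\operatorname{Re}\bigl(f-g,B(f+g)\bigr)$, Cauchy--Schwarz, and a phase rotation of $g$ give the bound in one shot. This is more elementary---no spectral theory of the compressed operator, no appeal to the angle machinery developed elsewhere in the paper---and the phase-rotation step is precisely what upgrades the classical real-space argument to the complex case. The paper's approach, on the other hand, has the pedagogical merit of tying the result back to the framework of angles between subspaces (the final inequality is literally a statement about one-dimensional principal angles), so it sits more organically within the paper's narrative. Both arguments are sharp and equally short; yours would be the natural choice in a context where the projector/angle formalism is not already in place.
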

\begin{proof}
We use the so-called ``mini-dimensional'' analysis, e.g.,\
\cite{k,ks}.
Let $S=\Span\{f,g\} \subset \hh$ be a two dimensional subspace (if $f$ and $g$ are
linearly dependent then the Rayleigh quotients are the same and the assertion is trivial).
Denote $\tilde{A}=(P_SA)|_S$ and two eigenvalues of $\tilde{A}$ by
$\lambda_1 \leq \lambda_2$. By well known properties of the Rayleigh-Ritz method,
we have
$\lambda (f), \lambda (g) \in [\lambda_1,\lambda_2] \subseteq
[\max\{\Sigma(A)\}, \min\{\Sigma(A)\}]$.
In the nontrivial case  $\lambda (f) \neq \lambda (g)$, we then have
the strong inequality $\lambda_1 < \lambda_2$.

In this proof, we extend the notation of the Rayleigh quotient of an operator $A$
at a vector $f$ to $\lambda (f; \, A) = (f, Af)/(f,f)$ to explicitly include $A$.
It is easy to see that
$\lambda (f; \, A) = \lambda (f; \,\tilde A)$ and that the same holds for vector $g$.
Then, since $[\lambda_1,\lambda_2] \subseteq [\max\{\Sigma(A)\}, \min\{\Sigma(A)\}]$
the statement of the lemma would follow from the 2D estimate
$|\lambda(f;\,\tilde{A})-\lambda(g;\,\tilde{A})|\leq (\lambda_2-\lambda_1)\sin(\theta(f,g))$
that we now have to prove.
The latter estimate is clearly invariant with respect to a shift and scaling of $\tilde{A}$.
Let us use the transformation
$\bar{A}=(\tilde{A}-\lambda_1 I)/(\lambda_2-\lambda_1)$ then
the estimate we need to prove turns into
$|\lambda(f;\,\bar{A})-\lambda(g;\,\bar{A})|\leq \sin(\theta(f,g)),$
but the operator $\bar{A}$
has two eigenvalues, zero and one, and thus
is an orthoprojector on some one dimensional subspace $\Span \{h\} \subset S$.
Finally,
$\lambda(f;\,\bar{A})=(f, P_h f)/(f,f)=\cos^2(\theta(h,f))$ and, similarly, 
$\lambda(g;\,\bar{A})=(g, P_h g)/(g,g)=\cos^2(\theta(h,g))$. But
$|\cos^2(\theta(h,f))-\cos^2(\theta(h,g))|=
| \,\|P_hP_fP_h||-\|P_hP_gP_h|| \,| \leq \|P_f-P_g\|=\sin(\theta(f,g)).$
\end{proof}

In the Rayleigh-Ritz method for a selfadjoint operator $A \in \bb(\hh)$
on a trial subspace $\ff$ the spectrum  $\Si((P_{\ff}A)|_{\ff})$ is called the
set of Ritz values, corresponding to $A$ and $\ff$. The next result of this
section is an estimate of a change in the Ritz values, where
one trial subspace, $\ff$, is replaced with another, $\gg$.
For finite dimensional subspaces such a result is
obtained in \cite{ka03}, where the maximal distance between pairs of
individually ordered Ritz values is used to measure
the change in the Ritz values. Here, the trial subspaces may be
infinite dimensional, so the Ritz values may form rather general
sets on the real interval $[\min\{\Sigma(A)\},\max\{\Sigma(A)\}]$
and we are limited to the use of the Hausdorff distance
between the sets, which does not take into account the ordering and
multiplicities.
\begin{theorem}\label{ritz_estim}
Let $A \in \bb(\hh)$ be a selfadjoint operator and $\ff$ and $\gg$ be nontrivial subspaces
of $\hh$. Then a bound for the Hausdorff distance between the Ritz values of $A$, with respect to the
trial subspaces $\ff$ and $\gg$, is given by the following inequality
\[
\dist(\Si((P_{\ff}A)|_{\ff}), \,  \Si((P_{\gg}A)|_{\gg})) \leq
(\max\{\Sigma(A)\} - \min\{\Sigma(A)\})\,\gap(\ff,\gg).
\]
\end{theorem}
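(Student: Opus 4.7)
The plan is to reduce to the one-dimensional case of Lemma \ref{rayleigh_quotient} via Weyl sequences combined with a careful residual estimate.

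\emph{Preliminaries.} I first shift $A \mapsto A - \alpha I$ with $\alpha = (\max\Si(A) + \min\Si(A))/2$; this shifts both Ritz-value spectra by the same constant and preserves both the Hausdorff distance and $\gap(\ff,\gg)$. After the shift $\|A\| = (\max\Si(A) - \min\Si(A))/2$, so the target bound equals $2\|A\|\gap(\ff,\gg)$. I may further assume $\gap(\ff,\gg) < 1$, since otherwise the bound is trivial: both Ritz-value spectra lie inside $[\min\Si(A),\max\Si(A)]$, an interval of length $\max\Si(A)-\min\Si(A)$.

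\emph{One-sided Hausdorff via Weyl sequence.} Fix $\mu \in \Si((\pf A)|_{\ff})$. By the spectral theorem for bounded selfadjoint operators there exist unit vectors $f_n \in \ff$ with $\|(\pf A - \mu I)f_n\| \to 0$, hence $\lambda(f_n;A) \to \mu$. Set $g_n := \pg f_n \in \gg$. Since $\pf f_n = f_n$,
\[
\|f_n - g_n\| = \|(\pf - \pg) f_n\| \leq \gap(\ff,\gg),
\]
so $\|g_n\| \geq \sqrt{1-\gap^2(\ff,\gg)} > 0$ and $\tilde g_n := g_n/\|g_n\|$ is well defined. Lemma \ref{rayleigh_quotient} applied to the unit vectors $f_n$ and $\tilde g_n$ gives $|\lambda(f_n;A) - \lambda(\tilde g_n;A)| \leq (\max\Si(A) - \min\Si(A))\gap(\ff,\gg)$, which is already the target bound on Rayleigh quotients; what remains is to upgrade from numerical range to spectrum.

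\emph{Residual estimate.} Writing $g_n = f_n - \pgo f_n$ and using $\pg \pgo = 0$,
\[
(\pg A - \mu I)g_n = \pg(A - \mu I)f_n - \pg A\, \pgo f_n.
\]
Splitting $I = \pf + \pfo$ in the first summand and invoking the Weyl condition $\pf(A-\mu I)f_n \to 0$,
\[
\pg(A - \mu I)f_n = \pg(\pf A - \mu I)f_n + \pg \pfo A f_n = o(1) + \pg \pfo A f_n.
\]
Each of the two residual summands is bounded in norm by $\gap(\ff,\gg)\|A\|$, using $\|\pg\pfo\| = \|(\pg - \pf)\pfo\| \leq \gap(\ff,\gg)$ and $\|\pgo f_n\| = \|(\pf - \pg)f_n\| \leq \gap(\ff,\gg)$. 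After combining them and dividing by $\|g_n\|$, the claim is that the tight estimate
\[
\|(\pg A - \mu I)\tilde g_n\| \leq 2\|A\|\gap(\ff,\gg) + o(1) = (\max\Si(A) - \min\Si(A))\gap(\ff,\gg) + o(1)
\]
holds, which together with $\dist(\mu, \Si((\pg A)|_\gg)) = \inf_{0\neq x \in \gg} \|(\pg A - \mu I)x\|/\|x\|$ for selfadjoint operators yields the one-sided Hausdorff bound.  Swapping the roles of $\ff$ and $\gg$ then handles the reverse direction.

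\emph{The main obstacle} is the final cancellation in the residual estimate: a naive triangle inequality on the two summands gives only $\|(\pg A - \mu I)g_n\| \leq (\max\Si(A) - \min\Si(A))\gap(\ff,\gg) + o(1)$ \emph{before} normalisation, which after dividing by $\|g_n\| \geq \sqrt{1 - \gap^2(\ff,\gg)}$ loses a factor $1/\sqrt{1 - \gap^2(\ff,\gg)}$. Recovering the tight constant requires exploiting the cancellation between $\pg \pfo A f_n$ and $\pg A \pgo f_n$ that already manifests in the $1$D calculation of Lemma \ref{rayleigh_quotient}: one may either perform an explicit $2\times 2$ Ritz analysis on $\Span\{f_n,\tilde g_n\}$ and invoke Lemma \ref{rayleigh_quotient} directly, or employ the Davis--Kahan intertwining unitary $W$ from \eqref{dav_ka_ka}, which conjugates $\pf$ to $\pg$ and reduces the task to bounding $\|WAW^* - A\|$ by $2\|A\|\gap(\ff,\gg)$.
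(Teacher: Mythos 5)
Your reductions (the shift, the trivial case $\gap(\ff,\gg)=1$, the identity $\dist(\mu,\Si(B))=\inf_{\|x\|=1}\|(B-\mu I)x\|$ for selfadjoint $B$, and the estimates $\|\pgo f_n\|\le\gap(\ff,\gg)$, $\|\pg\pfo\|\le\gap(\ff,\gg)$) are all correct, but the proof has a genuine gap at exactly the point you flag as ``the main obstacle,'' and neither of your two proposed repairs is carried out. The first repair cannot work as stated: a $2\times 2$ Ritz analysis on $\Span\{f_n,\tilde g_n\}$ together with Lemma \ref{rayleigh_quotient} only shows that $\lambda(\tilde g_n;A)$ is within $(\max\Si(A)-\min\Si(A))\gap(\ff,\gg)$ of $\mu$, i.e.\ it places $\mu$ near the \emph{numerical range} of $(\pg A)|_\gg$; since the numerical range is the convex hull of the spectrum, this gives no control on $\dist(\mu,\Si((\pg A)|_\gg))$ (think of $(\pg A)|_\gg$ with spectrum $\{0,1\}$ and $\lambda(\tilde g_n)=1/2$). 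The upgrade from numerical range to spectrum is precisely the residual estimate, and there the triangle inequality yields $2\|A\|\gap(\ff,\gg)$ only for the \emph{unnormalized} vector $g_n=\pg f_n$; dividing by $\|g_n\|\ge\sqrt{1-\gap^2(\ff,\gg)}$ destroys the constant, and you offer no mechanism that recovers it.

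Your second suggestion is essentially the paper's route, but the key inequality it requires is nontrivial and is the actual content of the proof. The paper takes $W$ from \eqref{dav_ka_ka}, so that $(P_\gg A)|_\gg$ and $\left(W^*(P_\gg A)|_\gg W\right)|_\ff$ are unitarily equivalent selfadjoint operators, and applies Kato's spectral perturbation bound to the two selfadjoint operators on $\ff$; the resulting norm is
\[
\sup_{\|f\|=1,\,f\in\ff}|(Af,f)-(AWf,Wf)|,
\]
which Lemma \ref{rayleigh_quotient} bounds by $(\max\Si(A)-\min\Si(A))\sqrt{1-|(f,Wf)|^2}$. The missing ingredient in your sketch is the lower bound $|(f,Wf)|\ge\sqrt{1-\gap^2(\ff,\gg)}$: it comes from the polar decomposition $\pg\pf=W\sqrt{\fgf}$, which gives $(f,Wf)=(\sqrt{(\fg)|_\ff}\,f,f)\ge\min\{\cos(\hat\Theta(\ff,\gg))\}$, combined with Corollary \ref{cor:gap}. (Note this is a bound on the quadratic form of the \emph{difference of the two compressions}, not the unconditional operator bound $\|WAW^*-A\|\le 2\|A\|\gap(\ff,\gg)$ you propose, which you would still have to justify.) Without this step the argument does not yield the stated constant; if you replace $\tilde g_n$ by $Wf_n$ your Weyl-sequence framework does reduce cleanly to this same estimate, so supplying it would complete your proof along essentially the paper's lines.
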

\begin{proof}
If $\gap(\ff,\gg)=1$ then the assertion holds since the both
spectra are subsets of $[\min\{\Sigma(A)\},\max\{\Sigma(A)\}]$.
Consequently we can assume
without loss of generality that $\gap(\ff,\gg)<1$. Then we
have $\gg=W \ff$ with $W$ defined by \eqref{dav_ka_ka}.
Operators $(P_{\gg}A)|_{\gg}$ and $\left(W^*(P_{\gg}A)|_{\gg}W\right)|_{\ff}$ are
unitarily equivalent, since $W$ is an isometry on $\ff$, therefore, their spectra are the same.
Operators $(P_{\ff}A)|_{\ff}$ and $\left(W^*(P_{\gg}A)|_{\gg}W\right)|_{\ff}$
are selfadjoint on the space $\ff$
and using \cite[Theorem 4.10, p. 291]{kato} we get
\begin{eqnarray}\nonumber
\dist(\Si((P_{\ff}A)|_{\ff}), \,  \Si((P_{\gg}A)|_{\gg}))
&=& \dist(\Si((P_{\ff}A)|_{\ff}), \,  \Si(\left(W^*(P_{\gg}A)|_{\gg}W\right)|_{\ff}))\\
&\leq& \|\left(P_{\ff}A-W^*(P_{\gg}A)|_{\gg}W\right)|_{\ff}\|.\label{ineq_77}
\end{eqnarray}
Then
\begin{eqnarray*}
\|\left(P_{\ff}A-W^*(P_{\gg}A)|_{\gg}W\right)|_{\ff}\|
&=& \sup_{\substack{\|f\|=1, \, f \in \ff}} |((P_{\ff}A-W^*(P_{\gg}A)|_{\gg}W) f,f)|\\
&=& \sup_{\substack{\|f\|=1, \, f \in \ff}} |(Af,f)-(AWf,Wf)|. 
\end{eqnarray*}
We have 
$| (f,Af)-(Wf,AWf)| \leq (\max\{\Sigma(A)\} - \min\{\Sigma(A)\})
\sqrt{1-|(f,Wf)|^2},$ $\forall f \in \ff$, $\|f\|=1$ by Lemma \ref{rayleigh_quotient}.
We need to estimate $|(f,Wf)|$ from below.
From the polar decomposition $P_{\gg} P_{\ff} = W \sqrt{P_{\ff} P_{\gg} P_{\ff}}$, 
we derive the equalities 
\[
(f,Wf)=(P_{\gg} P_{\ff} f, Wf)=(W^* P_{\gg} P_{\ff} f,f)=
(\sqrt{P_{\ff} P_{\gg}P_{\ff}}f,f),
\]
where we have
$
\sqrt{P_{\ff} P_{\gg} P_{\ff}}|_{\ff}=\sqrt{(P_{\ff} P_{\gg} P_{\ff})|_{\ff}}=
\sqrt{(P_{\ff} P_{\gg})|_{\ff}},
$
since $\ff$ is an invariant subspace of the operator $P_{\ff} P_{\gg} P_{\ff}$.
Thus,
$
(f,Wf) =(\sqrt{(P_{\ff} P_{\gg})|_{\ff}}f,f)
\geq \min\{\cos(\hat\Theta(\ff,\gg))\}
$
by Definition \ref{angles_from_cos}.
Finally, by assumption, $\gap(\ff,\gg)<1$, thus
Corollary~\ref{cor:gap} gives
$\min\{\cos^2(\hat\Theta(\ff,\gg))\}=1-\gap^2(\ff,\gg).$
\end{proof}

Finally, we assume that 
$\ff$ is $A$-invariant, which 
implies that the set of the values $\Si((P_{\ff}A)|_{\ff})$ 
is a subset, namely $\Si(A|_{\ff})$, 
of the spectrum of $A.$ The change in the Ritz values, bounded in 
Theorem \ref{ritz_estim}, can now be interpreted as a spectrum error  
in the Rayleigh-Ritz method. The result of Theorem \ref{ritz_estim} here is improved 
since the new bound involves the gap squared as in \cite{akpp06,ka07}.  
\begin{theorem}\label{ritz_estim_sq}
Under the assumptions of Theorem \ref{ritz_estim} let in addition $\ff$ be an $A$-invariant subspace
of $\hh$ corresponding to the top (or bottom) part of the spectrum of $A$. Then
\[
\dist(\Si(A|_{\ff}), \,  \Si((P_{\gg}A)|_{\gg})) \leq
(\max\{\Sigma(A)\} - \min\{\Sigma(A)\})\,\gap^2(\ff,\gg).
\]
\end{theorem}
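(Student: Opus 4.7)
The plan is to refine the approach of Theorem \ref{ritz_estim}, upgrading the gap to its square by exploiting the $A$-invariance of $\ff$ and its extremal spectral position.

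First, the trivial case $\gap(\ff,\gg)=1$ is handled by the inclusion of both spectra in $[\min\Si(A),\max\Si(A)]$; otherwise I take the Davis--Kahan unitary $W$ from \eqref{dav_ka_ka}, so $W\ff=\gg$. Since $(W^*(\pg A)|_\gg W)|_\ff$ is unitarily equivalent to $(\pg A)|_\gg$, applying \cite[Theorem~4.10, p.~291]{kato} on $\ff$ reduces the claim to the operator-norm estimate
\begin{equation*}
\sup_{f\in\ff,\,\|f\|=1}\bigl|(Af,f)-(AWf,Wf)\bigr|\leq (\max\Si(A)-\min\Si(A))\,\gap^2(\ff,\gg).
\end{equation*}

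Next I use the $A$-invariance of $\ff$, which by self-adjointness also makes $\fo$ invariant. Let $C=\sqrt{(\fg)|_\ff}$ on $\ff$; the polar decomposition $\gf=W\sqrt{\fgf}$ gives $\pf Wf = Cf$, so $Wf=Cf+k$ with $k=\pfo Wf\in\fo$. The block form $A=A|_\ff\oplus A|_{\fo}$ kills the cross terms, yielding
\begin{equation*}
(Af,f)-(AWf,Wf)=\bigl((A-CAC)f,f\bigr)-(Ak,k).
\end{equation*}

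The two gap-squared ingredients are $\|k\|^2=1-\|Cf\|^2\leq\gap^2(\ff,\gg)$, via Corollary \ref{cor:gap}, and $\|I-C\|\leq 1-\sqrt{1-\gap^2(\ff,\gg)}\leq\gap^2(\ff,\gg)$. With $D=I-C$ the identity $A-CAC=AD+DA-DAD$ and the invariance of $(Af,f)-(AWf,Wf)$ under the scalar shift $A\mapsto A-cI$ (since $\|f\|=\|Wf\|=1$) let me bound the first term by $\|A-cI\|\cdot\gap^2(\ff,\gg)$, of the right order $(\max\Si(A)-\min\Si(A))\,\gap^2(\ff,\gg)$.

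The main obstacle is recovering the sharp constant $1$: estimating the two pieces $|((A-CAC)f,f)|$ and $|(Ak,k)|$ separately and adding moduli yields a strictly larger constant. Here the top-or-bottom assumption is decisive: choosing $c\in[\max\Si(A|_{\fo}),\min\Si(A|_\ff)]$ makes $A-cI$ nonnegative on $\ff$ and nonpositive on $\fo$, so $((A-CAC)f,f)$ and $-(Ak,k)$ combine with a consistent sign rather than accumulate, paralleling the one-dimensional eigenvector case $(Af,f)-(AWf,Wf)=\sin^2\theta\,(\lambda-\mu)$ with $\lambda\in\Si(A|_\ff)$ and $\mu$ in the numerical range of $A|_{\fo}$, and reproducing the finite-dimensional strategy of \cite{akpp06,ka07}.
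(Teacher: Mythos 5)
Your setup is correct through the identity $(Af,f)-(AWf,Wf)=\bigl((A-CAC)f,f\bigr)-(Ak,k)$ with $C=\sqrt{(\fg)|_\ff}$ and $k=\pfo Wf$, and the estimate $\|k\|^2=1-\|Cf\|^2\le\gap^2(\ff,\gg)$ is right. The gap is in the final step. First, the asserted sign structure is false: even after shifting so that $A|_{\ff}\ge 0$, the quantity $\bigl((A-CAC)f,f\bigr)$ can be negative --- take $A|_{\ff}=gg^*$ rank one with $g$ not an eigenvector of $C$ and $f\perp g$, so that $(Af,f)=0$ while $(ACf,Cf)=|(Cg,f)|^2>0$. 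Second, even if both terms were nonnegative, a ``consistent sign'' would not recover the constant: for a sum of nonnegative terms the absolute value \emph{equals} the sum, so you still need a sharp upper bound on each term, and for the (possibly negative) first term you need a two-sided bound. What your route actually requires is the operator-norm inequality $\|A|_{\ff}-CA|_{\ff}C\|\le\|A|_{\ff}\|\,\|I-C^2\|=\|A|_{\ff}\|\gap^2(\ff,\gg)$ for the positive contraction $C$ with $\Si(C)\subseteq[\sqrt{1-\gap^2},1]$. This inequality is true, but proving it takes a separate argument (e.g., bounding $\|\sqrt{A|_\ff}\,Cf\|$ above and below by $\|\sqrt{A|_\ff}f\|\pm\|I-C\|$ and splitting into cases); your proposal neither states nor proves it, and the estimates you do invoke ($\|I-C\|\le\gap^2$ plus bilinearity) only give $\|A|_{\ff}-CA|_{\ff}C\|\le 2\|A|_{\ff}\|\,\|I-C\|$, which is exactly the constant loss you flag as the obstacle without resolving it.

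For comparison, the paper sidesteps this norm inequality entirely. It inserts the intermediate operator $(\pg P_{\ff}AP_{\ff})|_{\gg}$ and uses the triangle inequality for the Hausdorff distance; the crucial move is the similarity $\Si\bigl(\sqrt{P_{\ff}\pg P_{\ff}}\,A\,\sqrt{P_{\ff}\pg P_{\ff}}\bigr)=\Si\bigl(\sqrt{A|_\ff}\,(P_{\ff}\pg)|_\ff\,\sqrt{A|_\ff}\bigr)$ (legitimate because one factor is bijective), after which the first term is bounded by $\|\sqrt{A|_\ff}(P_{\ff}\pgo)|_\ff\sqrt{A|_\ff}\|\le\|A|_{\ff}\|\gap^2$ with no effort, and the second by $\|(\pg P_{\fo}AP_{\fo})|_{\gg}\|\le\|A|_{\fo}\|\gap^2$; the top-or-bottom hypothesis is used only to make $A|_{\ff}$ nonnegative after a shift (so $\sqrt{A|_\ff}$ exists) and to make $\|A|_{\ff}\|+\|A|_{\fo}\|$ equal to the spread. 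If you supply the missing operator-norm lemma, your argument does close and even gives a marginally stronger conclusion (a norm bound rather than a spectral Hausdorff bound), but as written the decisive step is absent.
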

\begin{proof}
As the subspace $\ff$ is $A$-invariant and $A$ is selfadjoint, the subspace $\fo$ is 
also $A$-invariant, 
so $A=P_{\ff}AP_{\ff}+P_{\fo}AP_{\fo}$ and, with a slight abuse of the notation, 
$A=A|_{\ff}+A|_{\fo},$ 
corresponding to the decomposition $\hh=\ff \oplus \fo$, thus 
$\Sigma(A)=\Sigma(A|_{\ff})\cup\Sigma(A|_{\fo})$.
We assume that $\ff$ corresponds to the top part of 
the spectrum of $A$---the bottom part case can be treated by replacing $A$ with $-A$. 
Under this assumption, we have 
$\max\{\Sigma(A|_{\fo})\}\le\min\{\Sigma(A|_{\ff})\}$.

Let us also notice that the inequality we want to prove is unaltered by replacing 
$A$ with $A-\alpha I$ where $\alpha$ is an arbitrary real constant. 
Later in the proof we need $A|_{\ff}$ to be nonnegative.   
We set $\alpha=\min \{\Sigma(A|_{\ff})\}$ and substitute  
$A$ with $A-\alpha I$, so now 
$\max\{\Sigma(A|_{\fo})\}\le 0=\min\{\Sigma(A|_{\ff})\}$, 
thus
\[
\left\|A|_{\ff}\right\|=
\max\{\Sigma(A|_{\ff})\}=
\max\{\Sigma(A)\},
\text{ and }
\left\|A|_{\fo}\right\|=
-\min\{\Sigma(A|_{\fo})\}=-\min\{\Sigma(A)\}.
\]
The constant in the bound we are proving then takes the following form:
\begin{equation}\label{my_constant}
\max\{\Sigma(A)\}-\min\{\Sigma(A)\}=\left\|A|_{\ff}\right\|+\left\|A|_{\fo}\right\|.
\end{equation}

As in the proof of Theorem \ref{ritz_estim},
if $\gap(\ff,\gg)=1$ then the assertion holds since the both
spectra are subsets of $[\min\{\Sigma(A)\},\max\{\Sigma(A)\}]$.
Consequently we can assume
without loss of generality that $\gap(\ff,\gg)<1$. Then we
have $\gg=W \ff$ with $W$ defined by \eqref{dav_ka_ka}. Operators
$\left(W^*(P_{\gg} P_\ff A P_\ff)|_{\gg}W\right)|_{\ff}$ and
$(P_{\gg} P_\ff A P_\ff)|_{\gg}$
are unitarily equivalent, since $W$ is an isometry on $\ff$, thus their spectra are the same.
Now, instead of \eqref{ineq_77}, we use the triangle inequality for the Hausdorff distance:
\begin{multline}\label{triangH}
\dist(\Si(A|_{\ff}),\,\Si((P_{\gg}A)|_{\gg}))\\
\leq
\dist(\Si((A|_{\ff}),\,\Si(\left(W^*(P_{\gg} P_\ff A P_\ff)|_{\gg}W\right)|_{\ff}))\\
+\dist(\Si((P_{\gg} P_\ff A P_\ff)|_{\gg})),\,\Si((P_{\gg}A)|_{\gg})).
\end{multline}

The operator $\sqrt{P_{\ff} P_{\gg} P_{\ff}}|_{\ff}=\sqrt{(P_{\ff} P_{\gg})|_{\ff}}$
is selfadjoint and its smallest point of the spectrum is 
$\min\{\cos(\hat{\Theta}(\ff,\gg))\}$ by Definition \ref{angles_from_cos}, which is positive 
by Theorem \ref{gap_angles} with $\gap(\ff,\gg)<1.$ 
The operator $\sqrt{P_{\ff} P_{\gg} P_{\ff}}|_{\ff}$ is invertible, so  
from the polar decomposition $P_{\gg} P_{\ff} = W \sqrt{P_{\ff} P_{\gg} P_{\ff}}$, which gives 
$P_{\ff} P_{\gg} P_{\ff} = P_{\ff} P_{\gg} W \sqrt{P_{\ff} P_{\gg} P_{\ff}}$, 
we obtain by applying the inverse on the right that 
$(P_{\ff}P_{\gg}W)|_{\ff} =\sqrt{P_{\ff} P_{\gg} P_{\ff}}|_{\ff}=(W^*P_{\gg} P_\ff)|_{\ff}.$
Thus,
\begin{eqnarray*}
\left(W^*(P_{\gg} P_\ff A P_\ff)|_{\gg}W\right)|_{\ff} &=&
\left(\sqrt{P_\ff P_{\gg} P_\ff} A \sqrt{P_\ff P_{\gg} P_\ff}\right)|_{\ff}\\
&=& \sqrt{P_\ff P_{\gg} P_\ff}|_{\ff} \sqrt{A|_{\ff}}
\sqrt{A|_{\ff}} \sqrt{P_\ff P_{\gg} P_\ff}|_{\ff}
\end{eqnarray*}
where the operator $A|_{\ff}$ is already made nonnegative  
by applying the shift and the substitution.

The spectrum of the product of two bounded operators, one of which is bijective,
does not depend on the order of the multipliers, since both products are similar 
to each other.  
One of our operators, $\sqrt{P_\ff P_{\gg} P_\ff}|_{\ff}$, in the product is bijective, so 
\[
\Si(\left(W^*(P_{\gg} P_\ff A P_\ff)|_{\gg}W\right)|_{\ff}) =
\Si\left(\sqrt{A|_\ff} (P_\ff P_{\gg})|_\ff \sqrt{A|_\ff}|_{\ff}\right).
\]
Then the first term in the triangle inequality \eqref{triangH} for the Hausdorff distance is estimated
using \cite[Theorem 4.10, p. 291]{kato}:
\begin{eqnarray*}
\dist\big(\Si\left(A|_{\ff}\right),\,&{}&
\Si\left(\left(W^*\left(P_{\gg} P_\ff A P_\ff\right)|_{\gg}W\right)|_{\ff}\right)
\big)\\
&=&
\dist\left(\Si\left(A|_{\ff}\right), \,
\Si\left(\sqrt{A|_\ff} (P_\ff P_{\gg})|_\ff \sqrt{A|_\ff}\right)\right)\\
&\leq&
\left\|A|_{\ff}-\sqrt{A|_\ff}(P_\ff P_{\gg})|_\ff\sqrt{A|_\ff}\right\|\\
&=&
\left\|\sqrt{A|_\ff} (P_\ff - P_\ff P_{\gg})|_\ff \sqrt{A|_\ff}\right\|\\
&\leq&
\left\|A|_\ff\right\|\left\|(P_\ff P_{\go})|_\ff\right\|
=\left\|A|_\ff\right\|\left\|P{_\ff}P_{\go}\right\|^2.
\end{eqnarray*}
To estimate the second term in \eqref{triangH},  
we apply again \cite[Theorem 4.10, p. 291]{kato}:
\begin{multline*}
\dist\left(\Si\left.\left((P_{\gg}P_{\ff}AP_{\ff}\right)\right|_{\gg})),\,
\Si((P_{\gg}A)|_{\gg})\right)
\leq
\|(P_{\gg}P_{\ff}AP_{\ff})|_{\gg}-(P_{\gg}A)|_{\gg}\| \\
=\|(P_{\gg}P_{\fo}AP_{\fo})|_{\gg}\| =\|P_{\gg}P_{\fo}A|_{\fo}P_{\fo}P_{\gg}\|
\leq
\|A|_{\fo}\|\|P_{\gg}P_{\fo}\|^2,
\end{multline*}
where $A=P_\ff A P_\ff +P_{\fo}A P_{\fo}$.
Plugging in bounds for both terms in \eqref{triangH} gives 
\[
\dist\left(\Si(A|_{\ff}), \, \Si((P_{\gg}A)|_{\gg})\right)
\leq
\left\|A|_\ff\right\|\left\|P_{\ff}P_{\go}\right\|^2 +
\left\|A|_{\fo}\right\|\left\|P_{\gg}P_{\fo}\right\|^2.
\]
Assumption $\gap(\ff,\gg)<1$ implies that
$\left\|P_{\ff}P_{\go}\right\|=\|P_{\gg}P_{\fo}\|=\gap(\ff,\gg),$
e.g.,\ see \cite[\S I.8, Theorem 6.34]{kato} 
and cf. Corollary \ref{cor:gap}.
Thus we obtain
\[
\dist\left(\Si((P_{\ff}A)|_{\ff}), \, \Si((P_{\gg}A)|_{\gg})\right)
\leq
\left(\left\|A|_{\ff}\right\|+\|A|_{\fo}\|\right)\gap^2(\ff,\gg). 
\]
Taking into account \eqref{my_constant} completes the proof.
\end{proof}

We conjecture that our assumption on the invariant subspace representing a specific part 
of the spectrum of $A$ is irrelevant, i.e.,\ the statement of Theorem \ref{ritz_estim_sq} 
holds without it as well, cf. \citet{akpp06,ka07}.
\section{The ultimate acceleration of the alternating projectors method}\label{DDM}
Every selfadjoint nonnegative non-expansion $A, \, 0\leq A \leq I$ in
a Hilbert space $\hh$ can be extended to an orthogonal projector in the space
$\hh\times \hh$, e.g., \cite{halmos69,RiSN}, and, thus, can be implicitly written as
(strictly speaking is unitarily equivalent to)
a product of two orthogonal projectors $\fg$ restricted to a subspace
$\ff \subset \hh \times \hh$.
Any iterative method that involves as a main step a multiplication of
a vector by $A$ can thus be called ``an alternating projectors'' method.

In the classical alternating projectors method,
it is assumed that the projectors are given explicitly and that the iterating procedure is trivially
\begin{equation} \label{e:apm}
e^{(i+1)} = \fg e^{(i)}, \,  e^{(0)} \in \ff.
\end{equation}
If $\left\|\left.(\fg)\right|_\ff \right\| < 1$ then the sequence of vectors
 $ e^{(i)}$ evidently converges to zero. Such a situation is typical when
$ e^{(i)}$ represents an error of an iterative method,
e.g., in a multiplicative DDM, and formula
(\ref{e:apm}) describes the error propagation
as in our DDM example below.

If the subspace $\moo = \ff \cap \gg$ is nontrivial and
$\left\|\left.(\fg)\right|_{\ff\ominus\moo}\right\|<1$
then the sequence of vectors $ e^{(i)}$ converges to
the orthogonal projection $e$ of $ e^{(0)}$ onto $\moo$.
The latter is called a von Neumann-Halperin (\cite{MR0032011,MR0141978}) method
in \cite{MR1990157}
of alternating projectors for determining the best approximation to $ e^{(0)}$
in $\moo$. We note that, despite the non-symmetric appearance of the
error propagation operator $ \fg$ in (\ref{e:apm}), it can be
equivalently replaced with the selfadjoint operator $\left.(\fg)\right|_\ff$
since $ e^{(0)} \in \ff$ and thus all $ e^{(i)} \in \ff$.

Several attempts to estimate and accelerate the convergence of
iterations (\ref{e:apm}) are made, e.g.,\
\cite{MR1853223,MR1990157,MR1896233}.
Here, we use a different approach, cf., e.g.,\  \cite{MR992008,MR1009556}, to
suggest the ultimate acceleration of the alternating projectors method.
First, we notice that the limit vector $e \in \moo$ is a nontrivial solution of
the following homogeneous equation
\begin{equation} \label{e:ap1}
\left.(I-\fg)\right|_\ff e=0,\quad  e\in\ff.
\end{equation}
Second, we observe that the linear operator is
 selfadjoint and nonnegative in
the equation above, therefore, a conjugate gradient (CG) method
can be used to calculate approximations to the solution $e$ in the null-space.
The standard CG algorithm for linear systems $Ax=b$
can be formulated as follows, see, e.g., \cite{MR561510}:\\
Initialization: set $\gamma = 1$ and compute the initial residual
$r = b - Ax$; \\*
Loop until convergence:\\*
$\gamma_{old} = \gamma, \, \gamma = (r,r)$; \\*
on the first iteration:  $p = r$; otherwise:\\*
 $\beta = {\gamma}/{\gamma_{old}}$ (standard) or
 $\beta = {(r-r_{old},r)}/{(r_{old},r_{old})}$ \\*
(the latter is recommended if an approximate application of $A$ is used) \\*
     $p = r + \beta p,$
   $r = Ap,$
  $\alpha = \gamma / (r,p),$
   $x = x + \alpha p,$
   $r = r - \alpha r.$\\*
End loop

It can be applied directly to the homogeneous equation $Ae=0$ with
$A=A^\ast \geq 0$ by setting $b=0$.
We need $A = \left.(I-\fg)\right|_\ff$ for equation (\ref{e:ap1}).
Finally, we note that CG acceleration can evidently
be applied to the symmetrized alternating projectors method
with more than two projectors.

The traditional theory of the CG method for non-homogeneous
equations extends trivially to the computation of the null-space of
a selfadjoint nonnegative operator $A$ and gives the following
convergence rate estimate:
\begin{equation} \label{e:cgcs}
 (e^{(k)}, A  e^{(k)}) \leq \min_{\deg p_k = k,  \, p_k(0)=1}
\sup_{ \lambda \in \Sigma(A)\setminus\{0\}} | p_k(\lambda)|^2
\quad (e^{(0)}, A  e^{(0)}).
\end{equation}
For equation (\ref{e:ap1}), $A = \left.(I-\fg)\right|_\ff$ and thus
$ (e^{(k)}, A  e^{(k)} ) = \| \pgo e^{(k)} \|^2$ and
by Definition \ref{angles_from_cos} we have
$\Sigma(A) = 1- \cos^2 \hat{\Theta}(\ff, \gg)$.
Estimate (\ref{e:cgcs}) shows convergence
if and only if zero is an isolated point of the
spectrum of $A$, or, in terms of the angles,
if and only if zero is an isolated point,
or not present,
in the set of angles $\hat{\Theta}(\ff, \gg)$, which is
the same as the condition for convergence of the
original alternating projectors method (\ref{e:apm}), stated above.

Method (\ref{e:apm}) can be equivalently reformulated as
a simple Richardson iteration
\[
e^{(k)} = (I-A)^k e^{(0)},\, e^{(0)} \in \ff, \text{ where } A = \left.(I-\fg)\right|_\ff,
\]
and thus falls into the same class of polynomial methods as does the CG method.
It is well known that the CG method provides the smallest value
of the energy (semi-) norm of the error, in our case of
$\| \pgo e^{(k)} \|$, where $e^{(k)} \in \ff$, which gives us an opportunity
to call it the ``ultimate acceleration'' of the alternating projectors method.

A possible alternative to equation (\ref{e:ap1}) is
\begin{equation} \label{e:ap2}
( \pfo + \pgo  ) e  = 0,
\end{equation}
so we can take $A = \pfo + \pgo$ in the CG method for equation (\ref{e:ap2})
and then $\Sigma(A)$ is given by Theorem \ref{sum_of_projs_Jan}.
Equation (\ref{e:ap2}) appears in the so-called additive DDM method, e.g., \cite{MR2104179}.
A discussion of (\ref{e:ap2}) can be found in \cite[\S 7.1, p. 127]{juju_thesis}.

Estimate (\ref{e:cgcs}) guarantees the finite
convergence of the CG method if the spectrum of $A$
consists of a finite number of points. At the same time,
the convergence of the Richardson method can be slow in
such a case, so that the CG acceleration is particularly noticeable.
In the remainder of the section, we present a simple domain decomposition example
for the one dimensional diffusion equation.

Consider the following one dimensional diffusion equation
$\int_0^1 u^\prime v^\prime dx =
\int_0^1 f v^\prime dx,$ $\forall v\in H_0^1([0,1])$
with the solution $u \in H_0^1([0,1])$, where $H_0^1([0,1])$ is
the usual Sobolev space of real-valued functions with the 
Lebesgue integrable squares of the first generalized derivatives 
and with zero values at the end points of the
interval $[0,1]$. We use the bilinear form
$\int_0^1 u^\prime v^\prime dx$ as a scalar product on
$H_0^1([0,1])$.

We consider DDM with an overlap, i.e.,\ we split
$[0,1] = [0, \alpha] \cup [\beta, 1]$, with
$0 < \beta < \alpha < 1$ so that $[\beta, \alpha]$ is an overlap.
We directly define orthogonal complements:
\[\fo =\{ u \in H_0^1([0,1]): u(x)=0, x \in [\alpha, 1] \}
\text{ and }
\go =\{ v \in H_0^1([0,1]): v(x)=0, x \in [0, \beta] \}
\]
of subspaces $\ff\subset H_0^1([0,1])$ and 
$\gg\subset H_0^1([0,1])$. 
Evidently, $\hh = \fo + \go$, where the sum is not direct
due to the overlap.

It can be checked easily that the subspace $\ff$ consists of
functions, which are linear on the interval $[0,\alpha]$
and the subspace $\gg$ consists of
functions, which are linear on the interval $[\beta,1]$.
Because of the overlap $[\beta,\alpha]$, the intersection
$\moo = \ff \cap \gg$ is trivial and the only solution of
(\ref{e:ap1}) and (\ref{e:ap2}) is $e=0$.

We now completely characterize all angles between
$\ff$ and $\gg$. Let $f \in \ff$ be linear on intervals $[0,\alpha]$ and
$[\alpha, 1]$. Similarly, let $g \in \gg$ be 
linear on intervals $[0, \beta]$ and $[\beta,1]$.
It is easy to see, cf. \cite[\S 7.2]{juju_thesis}, 
that all functions in the subspace $\ff \ominus \Span \{ f\}$
vanish outside of the interval $[\alpha, 1]$, while
all functions in the subspace $\gg \ominus \Span \{ g\}$
vanish outside of the interval $[0, \beta]$. Therefore,
the subspaces $\ff \ominus \Span \{ f\}$
and $\gg \ominus \Span\{ g\}$ are orthogonal, since
$\beta < \alpha$.
We summarize these results
in terms of the principal subspaces:
$\hat\Theta(\ff,\gg)=\Theta(\ff,\gg)=\Theta_p(\ff,\gg) =
\theta(f,g)\cup{\pi}/{2},$ 
where 
$\cos^2\theta(f,g)=(\beta(1-\alpha))/(\alpha(1-\beta)),$
(the latter equality can be derived by elementary calculations, 
see \cite[Theorem 7.2, p. 131]{juju_thesis});
$\Span \{ f\}$ and $\Span \{ g\}$ is one pair of
principal subspaces and
$\ff \ominus \Span \{ f\}$  and $\gg \ominus \Span \{ g\}$
is the other, corresponding to the angle $\pi/2.$

In multiplicative Schwarz
DDM with an overlap
for two subdomains,
the error propagation of a simple iteration
is given by (\ref{e:apm}) and the convergence rate
is determined by the quantity
\[
\left\|\left.(\fg)\right|_\ff \right\|
= \cos^2 \theta(f,g)=(\beta(1-\alpha))/(\alpha(1-\beta))
< 1,
\] which approaches one when the overlap
$\alpha-\beta$ becomes small. At the same time, however,
the CG method described, e.g., in \cite{MR1009556,MR992008},
converges at most in two iterations, since
the spectrum of $A$ in (\ref{e:ap1})
consists of only two eigenvalues,
$ 1 - \cos^2 \theta(f,g)= \sin^2 \theta(f,g)$ and $1.$

In the additive DDM the error is determined by (\ref{e:ap2})
and the spectrum of $A$, the sum of two orthoprojectors,
by analogy with Theorem \ref{sum_of_projs_Jan}  consists of four eigenvalues,
\[ 1 - \cos \theta(f,g)= 2 \sin^2 (\theta(f,g)/2),\,
1,\,  1 + \cos \theta(f,g)= 2 \cos^2 (\theta(f,g)/2),
\text{ and } 2, 
\]
therefore the  CG method converges at most in four iterations.
Similar results for a finite difference discretization
of the 1D diffusion equation can be found in \cite{MR2058877}. 
\section*{Acknowledgements} 
We thank Ilya Lashuk for contributing to the proofs of 
Theorems \ref{andrew_ilya} and \ref{ritz_estim}.
\bibliographystyle{plainnat}

\def\cprime{$'$}

\end{document}